\newcommand{\Ab}{\mathbb{A}}
\newcommand{\Gb}{\mathbb{G}}
\newcommand{\Lb}{\mathbb{L}}
\newcommand{\Pb}{\mathbb{P}}
\newcommand{\Zb}{\mathbb{Z}}
\newcommand{\Cc}{\mathcal{C}}
\newcommand{\Ec}{\mathcal{E}}
\newcommand{\Fc}{\mathcal{F}}
\newcommand{\Nc}{\mathcal{N}}
\newcommand{\Oc}{\mathcal{O}}
\newcommand{\Pc}{\mathcal{P}}
\newcommand{\Vc}{\mathcal{V}}
\newcommand{\Ls}{\mathscr{L}}
\newcommand{\Ifr}{\mathfrak{I}}
\newcommand{\mfr}{\mathfrak{m}}
\newcommand{\pfr}{\mathfrak{p}}
\newcommand{\qfr}{\mathfrak{q}}
\newcommand{\PCob}{{\underline\Omega}}
\newcommand{\coim}{\mathrm{\coim}}
\newcommand{\Spec}{\mathrm{Spec}}
\newcommand{\bl}{\mathrm{Bl}}
\newcommand{\wtil}{\widetilde}
\newcommand{\abs}[1]{\lvert #1 \rvert}
\newcommand{\rank}{\mathrm{rank}}
\newcommand{\hook}{\hookrightarrow}
\newcommand{\cl}{\mathrm{cl}}
\newcommand{\modmod}{/\!\!/}
\newtheorem{theo}{Tplottin ubuntuheorem}[section]
\theoremstyle{plain}
\newtheorem{thm}[theo]{Theorem}
\newtheorem{lem}[theo]{Lemma}
\newtheorem{prop}[theo]{Proposition}
\newtheorem{cor}[theo]{Corollary}
\newtheorem{con}[theo]{Conjecture}
\newtheorem*{thm*}{Theorem}
\newtheorem*{lem*}{Lemma}
\newtheorem*{prop*}{Proposition}
\newtheorem*{cor*}{Corollary}
\theoremstyle{definition}
\newtheorem{defn}[theo]{Definition}
\newtheorem{ex}[theo]{Example}
\newtheorem{cons}[theo]{Construction}
\newtheorem{rem}[theo]{Remark}
\title{Algebraic Spivak's theorem and applications}
\author{Toni Annala}
\newcommand{\Addresses}{{
  \bigskip
  \footnotesize

  Toni Annala, \textsc{Department of Mathematics, University of British Columbia,
    Vancouver, BC V6T1Z2 Canada}\par\nopagebreak
  \textit{E-mail address:} \texttt{tannala@math.ubc.ca}

}}
\date{}
\begin{document}

\maketitle

\begin{abstract}
We prove an analogue of Lowrey--Schürg's algebraic Spivak's theorem when working over a base ring $A$ that is either a field or a nice enough discrete valuation ring, and after inverting the residual characteristic exponent $e$ in the coefficients. By this result algebraic bordism groups of quasi-projective derived $A$-schemes can be generated by classical cycles, leading to vanishing results for low degree $e$-inverted bordism classes, as well as to the classification of quasi-smooth projective $A$-schemes of low virtual dimension up to $e$-inverted cobordism. As another application, we prove that $e$-inverted bordism classes can be extended from an open subset, leading to the proof of homotopy invariance of $e$-inverted bordism groups for quasi-projective derived $A$-schemes.
\end{abstract}

\tableofcontents

\section{Introduction}

Algebraic cobordism $\Omega^*$ is supposed to be the universal oriented cohomology theory (and more generally an oriented bivariant theory) in algebraic geometry. It should contain more refined information than intersection theory and algebraic $K$-theory, and in fact the latter two theories should be recoverable from $\Omega^*$ in a formal fashion. Of course, such a theory should not be easily computable. Rather, its role is to serve as the canvas for the most general possible (oriented) cohomology computations in algebraic geometry, and it should clarify the geometric essence of formulas of which other theories (such as intersection theory and algebraic $K$-theory) only see shadows of (compare this to the role of Grothendieck ring of varieties in the study of Euler characteristics).

However, the existence of algebraic cobordism is largely hypothetical: until rather recently, the only known geometric model for such a theory was the algebraic bordism theory $\Omega_*$ of Levine and Morel constructed in \cite{Levine:2007} for algebraic schemes over a field of characteristic 0. Note that $\Omega_*$ is a homology theory rather than a cohomology theory, and therefore the groups $\Omega_*(X)$ recover the algebraic cobordism rings only for smooth varieties (this is a version of Poincaré duality). Even this limited model has had considerable successes over the years: Levine and Pandharipande proved the degree 0 Donaldson--Thomas conjectures in \cite{levine-pandharipande} using the computation of algebraic cobordism of a field of characteristic 0, Hudson--Ikeda--Matsumura--Naruse study certain $K$-theoretical characteristic classes in \cite{hudson-ikeda-matsumura-naruse} using algebraic cobordism (see also the follow-up paper \cite{hudson-matsumura} by Hudson and Matsumura generalizing some of the results to algebraic cobordism), and Sechin and Sechin--Semenov have found applications of algebraic Morava $K$-theories (defined using algebraic cobordism) to approximating torsion in Chow groups in \cite{sechin-chern} and to arithmetic questions about algebraic groups in \cite{sechin-semenov}.

Combining on the innovative approach of Lowrey and Schürg in \cite{lowrey-schurg} of using derived algebraic geometry in the study of algebraic cobordism and the work by Yokura on universal bivariant theories in \cite{yokura09}, the author was able to extend in \cite{annala-cob} algebraic bordism of Levine and Morel to a full fledged bivariant theory for quasi-projective derived schemes over a field of characteristic 0, giving for the first time geometric models for algebraic cobordism (and Chow cohomology) rings $\Omega^*(X)$ of singular varieties. Note that the rings $\Omega^*(X)$ are not $\Ab^1$-invariant, and hence cannot be represented by motivic spectra in the stable $\Ab^1$-homotopy theory. Since then, it was realized that the methods of derived algebraic geometry have potential for defining algebraic cobordism very generally, and starting from \cite{annala-yokura} and \cite{annala-chern} a program was initiated with the goal of arriving at the general definition of (bivariant) algebraic cobordism, and proving all the expected properties. After solving several technical issues in \cite{annala-pre-and-cob} and \cite{annala-base-ind-cob}, we have obtained a bivariant theory $\Omega'^\bullet$ on the $\infty$-category of finite dimensional Noetherian derived schemes admitting an ample family of line bundles, where the groups $\Omega'^\bullet(X \to Y)$ admit a natural grading whenever $X \to Y$ is of finite type.

Note that as special cases of the bivariant groups we obtain models of algebraic cobordism rings
$$\Omega'^*(X) := \Omega'^*(X \to X)$$
and algebraic bordism groups
$$\Omega'_\bullet(X) := \Omega'_\bullet\big(X \to \Spec(\Zb) \big)$$ 
giving rise to a cohomology and a Borel--Moore homology theory respectively. Moreover, these invariants are known to have the following properties 
\begin{enumerate}
\item if $X$ is quasi-projective over a field of characteristic 0, then $\Omega_\bullet(X)$ recovers the algebraic bordism group of Levine and Morel;

\item if $X$ is regular, then there is a natural Poincaré-duality isomorphism
$$- \bullet 1_{X/\Zb}: \Omega'^*(X) \xrightarrow{\cong} \Omega'_\bullet(X);$$

\item the theory $\Omega'^\bullet$ satisfies the bivariant analogue of projective bundle formula;

\item given a vector bundle $E$ on $X$, there exists Chern classes 
$$c_i(E) \in \Omega'^i(X)$$
satisfying the expected properties;

\item $K^0(X)$ can be recovered from $\Omega'^*(X)$ analogously to Conner--Floyd theorem.
\end{enumerate}
The purpose of this paper is to continue the study of the properties of our model $\Omega'^\bullet$ of bivariant algebraic cobordism. In contrast with the earlier work, our emphasis will be on studying the homology theory $\Omega'_\bullet$, and the results will hold only for quasi-projective derived schemes over fields and some discrete valuation rings. 

One of the main motivations for this work is the following conjectural computation, whose only known case is when $A$ is a field of characteristic 0 (this is due to \cite{Levine:2007}).
\begin{con}\label{CobordismOfLocalRingsConj}
Let $A$ be a local Noetherian ring. Then the natural map
$$\Lb^* \to \Omega'^* \big( \Spec(A) \big)$$
is an isomorphism, where $\Lb^*$ is the Lazard ring (with cohomological grading).
\end{con}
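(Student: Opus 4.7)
The plan is a two-step reduction — first to the residue field $k = A/\mfr$, then to the known Levine--Morel computation over a field of characteristic zero. The map $\Lb^* \to \Omega'^*(\Spec(A))$ is defined by the formal group law structure that every oriented bivariant theory carries on first Chern classes of line bundles (property (4)). When $A$ is regular, Poincaré duality (property (2)) identifies $\Omega'^*(\Spec(A))$ with $\Omega'_\bullet(\Spec(A))$; since the Spivak theorem of this paper is stated over a field or a DVR, I treat these as the main cases and handle the general local Noetherian $A$ by a separate argument (via some form of presentation through nearby regular rings).

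\emph{Field case.} For $k$ of characteristic zero the conjecture is Levine--Morel. For $k$ of positive characteristic $p$, the algebraic Spivak theorem presents $\Omega'_\bullet(\Spec(k))[1/p]$ by smooth projective $k$-varieties modulo cobordism relations. To construct an inverse to the classifying map, I would assign to each smooth projective $X/k$ its complete list of characteristic numbers $\int_X c_\alpha(T_{X/k})$, and interpret these as an element of $\Lb^*[1/p]$ via the universal formal group law together with the Conner--Floyd comparison with $K^0$ (property (5)). The main verification — vanishing on cobordism boundaries — should follow from an algebraic double-point relation, with characteristic numbers of the two components of a nodal degeneration summing to those of the smooth generic fibre.

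\emph{From field to DVR.} For $A$ a DVR with fraction field $K$, extend the $e$-inverted computation from $\Spec(K)$ to $\Spec(A)$ using the extension-from-open-subsets result of this paper: every class in $\Omega'^*(\Spec(A))[1/e]$ lifts from $\Spec(K)$, and must agree with the classification over the closed point by homotopy invariance applied to a suitable $\Ab^1$-bundle bridging generic and closed fibres. The same triad of Spivak, extension, and homotopy invariance — all in the relative form developed throughout the paper — would drive this step.

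\emph{Main obstacle.} The critical difficulty is the integral statement, which the present methods cannot touch: Spivak, extension, and homotopy invariance are all proved only after inverting $e$, so $e$-torsion in $\Omega'^*(\Spec(A))$ is inaccessible. A full proof would require either an integral Spivak theorem — presumably tied to resolution of singularities in positive characteristic and to delicate mixed-characteristic desingularisation techniques — or a completely independent construction of the characteristic-number map, perhaps via comparison with a motivic or syntomic refinement of $K^0$ carrying a compatible formal group law. I expect the mixed-characteristic integral case to lie well beyond the techniques of this paper.
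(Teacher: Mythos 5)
The statement you are proving is Conjecture \ref{CobordismOfLocalRingsConj}: the paper offers no proof of it, explicitly states that the only known case is a field of characteristic $0$ (Levine--Morel), and only makes "modest progress" towards it, namely the $e$-inverted computation in degrees $0$, $-1$, $-2$ over a field and in degrees $\geq 0$ over a Henselian DVR (Corollary \ref{CobRingCor}). So there is no proof in the paper to compare against, and your proposal should be judged as an attempted proof of an open problem. It does not succeed, and the gaps are not confined to the integral statement you flag at the end.

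The central gap is surjectivity of $\Lb^*[e^{-1}] \to \Omega^*(\Spec(k))[e^{-1}]$ for $k$ of positive characteristic. Your characteristic-number construction, even if it could be made to descend through the double-point and decomposition relations, only produces a map out of $\Omega^*(\Spec(k))$ into something like $\Zb[b_1,b_2,\dots]$; this is exactly how the paper proves injectivity (Proposition \ref{LInjectivityProp}, via Landweber--Novikov operations), and it says nothing about whether the classifying map is onto. By Corollary \ref{PerfectSpivakCor} surjectivity reduces to showing that every smooth projective $k$-variety is cobordant to a class coming from $\Lb^*$, and the paper achieves this only in dimension $\leq 2$, using embedded resolution of surfaces and the birational invariance supplied by Lemmas \ref{ClassOfPBLem} and \ref{ClassInBlowUpLem}; in dimension $\geq 3$ this is open. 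Your DVR step is likewise not an argument: the Extension theorem gives surjectivity of restriction to an open subscheme, not a well-defined or injective comparison between $\Spec(A)$ and $\Spec(K)$, and there is no "$\Ab^1$-bundle bridging generic and closed fibres" of $\Spec(A)$. Finally, the conjecture is stated for arbitrary local Noetherian rings, where neither Poincar\'e duality nor any Spivak-type presentation is available, and your "presentation through nearby regular rings" is not a construction. In short: the map exists and is injective by results in the paper, but its surjectivity --- the actual content of the conjecture --- is untouched by your proposal except in the low-degree cases the paper already handles.
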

\noindent Proving this result would be very interesting even in the case where $A=k=\bar k$ is an algebraically closed field of positive characteristic, as the proof method would have to be different from the one used in characteristic 0 (which is based on resolution of singularities and weak factorization). Even though we manage to make only modest progress towards this conjecture, the results we obtain in the progress of doing so (notably Algebraic Spivak's theorem) have major ramifications for the structure of algebraic bordism groups after inverting residual characteristic, as we shall see below.

\subsection{Summary of results}

Recall that the bivariant theory $\Omega'^\bullet$ coincides with a simpler theory $\Omega^\bullet$ (also constructed in \cite{annala-base-ind-cob}) for derived schemes admitting an ample line bundle (instead of just an ample family). The main result of this article is the following analogue of the Algebraic Spivak's theorem of Lowrey and Schürg from \cite{lowrey-schurg} (see also the original result of Spivak in derived differential geometry, \cite{spivak2010} Theorem 3.12). 

\begin{thm*}[Theorem \ref{GeneralSpivakThm}]
Let $A$ be a field or an excellent Henselian discrete valuation ring with a perfect residue field, and let $e$ be the residual characteristic exponent of $A$. Then the algebraic cobordism ring $\Omega^*\big(\Spec(A)\big)[e^{-1}]$ is generated as a $\Zb[e^{-1}]$-algebra by regular projective $A$-schemes. Moreover, for all quasi-projective derived $A$-schemes $X$, $\Omega_\bullet(X)[e^{-1}]$ is generated as an $\Omega^*\big(\Spec(A)\big)[e^{-1}]$-module by classes of regular schemes mapping projectively to $X$.
\end{thm*}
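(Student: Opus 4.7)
The plan is to combine the derived-geometric techniques behind the Lowrey--Schürg Algebraic Spivak theorem with Gabber's refinement of de Jong's alterations, substituting alterations of generic degree prime to $e$ for the resolutions of singularities available only in characteristic $0$. Both assertions of the theorem will follow from the second one: part (a) is the case $X=\Spec(A)$ together with the Poincaré duality isomorphism (property (2) in the introduction), once one observes that a product of classes of regular projective $A$-schemes can be re-expressed, via a moving/deformation argument inside $\Omega^*(\Spec(A))[e^{-1}]$, as a $\Zb[e^{-1}]$-combination of such classes, which is itself part (b) applied to the product. I therefore focus on generating $\Omega_\bullet(X)[e^{-1}]$ as an $\Omega^*(\Spec(A))[e^{-1}]$-module.

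A class in $\Omega_\bullet(X)$ is represented by a projective morphism $f\colon Y \to X$ with $Y$ a quasi-smooth derived $A$-scheme. I would argue by descending induction on the virtual dimension, using two reduction steps. First, I would reduce to the case where $Y$ is \emph{classical}. Locally a quasi-smooth $Y$ embeds in a smooth $A$-scheme $P$ as the derived vanishing of a section of a vector bundle, and the derived deformation to the normal bundle (already exploited in the construction of $\Omega^\bullet$ in \cite{annala-base-ind-cob} and \cite{annala-pre-and-cob}) identifies $[Y \to X]$ with a class supported on the classical truncation $Y_{cl}$, modulo classes supported on strictly lower-dimensional loci that are handled by the inductive hypothesis. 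Globally one assembles this using the bivariant machinery, and in the DVR case one separates the generic and special fiber contributions before applying the local argument.

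Once $Y$ is classical and integral (which may be assumed by decomposing into components and again inducting on dimension), Gabber's theorem produces a projective alteration $\pi\colon Z \to Y$ with $Z$ regular and of generic degree $d$ coprime to $e$; the hypothesis that $A$ is either a field or an excellent Henselian DVR with perfect residue field is precisely what is needed for such an absolutely regular alteration to exist. The bivariant projection formula then yields a relation of the form
$$d \cdot [Y \xrightarrow{f} X] \;=\; [Z \xrightarrow{f\circ\pi} X] \;+\; \varepsilon,$$
where $\varepsilon$ is represented by classes supported on the proper closed subscheme of $Y$ over which $\pi$ fails to be an isomorphism. By the inductive hypothesis on dimension of the support, $\varepsilon$ lies in the submodule generated by regular projective classes over $X$, and since $d$ is invertible in $\Zb[e^{-1}]$, we conclude.

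The main obstacle I expect is the first reduction step: the passage from a quasi-smooth derived $Y$ to $Y_{cl}$ has to be made precise enough that the error term strictly decreases in dimension while remaining expressible through the intended generating set. This requires carefully chaining together derived blowup formulas, the bivariant projection formula, and the behavior of fundamental classes under derived regular embeddings, and it is where the prior technical groundwork of \cite{annala-pre-and-cob} and \cite{annala-base-ind-cob} must be used essentially. In the DVR setting there is the additional subtlety that the special fiber may carry embedded components, and one must verify that Gabber's alteration—being absolutely regular—is strong enough to clear these during the inductive step, so that the same induction uniformly handles both fibers.
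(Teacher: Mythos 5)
Your overall blueprint (deformation to the normal cone plus desingularization by alterations, with induction on dimension) points in the right direction, but two of your three key steps rest on tools that are not available in this theory, and one arithmetic point is backwards. The central gap is the claimed relation $d\cdot[Y\to X]=[Z\to X]+\varepsilon$ with $\varepsilon$ ``supported on the closed locus where $\pi$ fails to be an isomorphism.'' This is a generalized degree formula, and producing such an $\varepsilon$ requires a localization exact sequence (or at least the ability to compare classes that agree on a dense open), which is precisely what is \emph{not} known for $\Omega_\bullet$ outside characteristic $0$ --- the paper stresses that nothing can be said about the kernel of $j^!$ even after proving the Extension theorem. For the same reason your first reduction, from a quasi-smooth derived $Y$ to ``a class supported on $Y_\cl$ modulo lower-dimensional error,'' is not justified (and $Y_\cl$ need not even be lci, so it carries no fundamental class). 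The paper's route avoids both problems by never altering $Y$ itself: it embeds $X_\cl$ in an open $U\subset\Pb^n_A$, applies the $e$-alteration to the \emph{classical deformation-to-the-normal-cone space} $M^\cl(\overline X_\cl/\Pb^n_A)$, and obtains an exact identity $i^!([W_0^\circ\to U])=s^!([\Ec'^\circ\to\Pb(\Nc_{X/U}\oplus\Oc)])$ between Gysin pullbacks of the two fibres. The generically finite class $[W_0^\circ\to U]$ is then computed not by a degree formula but by the refined projective bundle formula for generically finite maps \emph{to projective space} (Theorem \ref{SecondRefinedPBFThm} and Corollary \ref{ThirdRefinedPBFCor}), which is exactly why the target must be (an open in) $\Pb^n_A$; the $\infty$-side is unwound via the snc relations and the regular presentations of Chern classes, and the leftover $c_1(\Oc(1))^i$-terms are absorbed by induction on Krull dimension.

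Separately, your use of alterations is inverted: an alteration of degree $d$ coprime to $e$ does \emph{not} have $d$ invertible in $\Zb[e^{-1}]$ (take $e=p$ and $d=\ell$ a different prime). What the argument needs, and what Temkin's $\mathrm{char}(X)$-alterations provide over the bases considered here, is an alteration of degree $e^m$ --- a power of the residual characteristic exponent --- which becomes invertible precisely after inverting $e$; this is where the coefficient $e^m$ in Lemma \ref{MainEqLem1} and the division by $e^m$ in the proof of Lemma \ref{FundClassLem} come from. Gabber's prime-to-$\ell$ alterations would instead be the right tool for $\Zb_{(\ell)}$-local statements, not for $\Zb[e^{-1}]$-coefficients.
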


\noindent In particular, the $e$-inverted algebraic bordism groups are generated by derived fibre products over $A$ of regular $A$-schemes. It is possible to use this fact to prove that classical cycles generate these groups as $\Zb[e^{-1}]$-modules, leading to the following corollaries.

\begin{cor*}[Corollary \ref{StrongerSpivakCor}]
Suppose $A$ is as in Theorem \ref{GeneralSpivakThm}. Then, for all quasi-projective derived $A$-schemes $X$, $\Omega_\bullet(X)[e^{-1}]$ is generated as a $\Zb[e^{-1}]$-module by cycles of form
$$[V \to X]$$
with $V$ a classical complete intersection scheme and the structure morphism $V \to \Spec(A)$ is either flat or factors through the unique closed point of $\Spec(A)$.
\end{cor*}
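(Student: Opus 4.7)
The plan is to start from the generators provided by Theorem~\ref{GeneralSpivakThm} and to rewrite each in the required classical form via a dichotomy on how the factors of the underlying derived fibre product sit over $\Spec(A)$. Combining the two assertions of Theorem~\ref{GeneralSpivakThm} and using that the bivariant action of $\Omega^*(\Spec A)[e^{-1}]$ on $\Omega_\bullet(X)[e^{-1}]$ sends $[W \to \Spec A]\cdot[V \to X]$ to $[V\times_A^L W \to X]$, one obtains that $\Omega_\bullet(X)[e^{-1}]$ is generated as a $\Zb[e^{-1}]$-module by classes of the form
$$[V\times_A^L W_1\times_A^L\cdots\times_A^L W_m\to X],$$
where $V$ is regular projective over $X$ and each $W_i$ is regular projective over $A$. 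After decomposing into connected (equivalently, irreducible) components, each factor becomes integral regular; since $A$ is a field or a DVR, every such component is either flat over $A$ or set-theoretically concentrated on the closed point of $\Spec(A)$, the latter case being called \emph{vertical}. We may thus assume every factor is of one of these two types.

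Suppose first that every factor is flat over $A$. Then higher Tor over $A$ vanishes, so each successive derived fibre product agrees with its classical truncation. The resulting classical scheme $Y$ is flat over $A$, and its structure map to $\Spec(A)$ is a local complete intersection: each $W_i$ and $V$ is a finite-type regular scheme over the regular Noetherian base $\Spec(A)$, whence $W_i \to \Spec(A)$ is l.c.i., and l.c.i.\ morphisms are closed under composition and base change. Hence $[Y \to X]$ already has the required form. This in particular settles the corollary when $A$ is a field, since over a field every scheme is trivially flat and only this case arises.

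Suppose instead that at least one factor, say $W_1$, is vertical. Then $Y$ maps to $\Spec(k)$ via $W_1$, so $Y \to X$ factors through the closed immersion $\iota : X_0 := X \times_A \Spec(k) \hookrightarrow X$, giving $[Y \to X] = \iota_*[Y \to X_0]$ with $[Y \to X_0] \in \Omega_\bullet(X_0)[e^{-1}]$. Since $X_0$ is quasi-projective over the perfect residue field $k$, the field case of the corollary, proven in the previous paragraph, rewrites $[Y \to X_0]$ as a $\Zb[e^{-1}]$-linear combination of classes $[V' \to X_0]$ with $V'$ a classical complete intersection $k$-scheme. Pushing these forward via $\iota_*$ produces generators of the required form whose structure morphisms to $\Spec(A)$ factor through the closed point.

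The main pitfall to sidestep is that derived fibre products of classical schemes over a DVR are generally not classical; the argument above avoids this by either forcing Tor vanishing (when every factor is flat) or pushing forward from $X_0$ and appealing to the field case (when any factor is vertical), so one never needs to directly classicalize a genuinely derived fibre product. The potential circularity is resolved by establishing the field case first; it is then invoked only over the residue field when treating the DVR case.
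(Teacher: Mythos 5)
Your proof is correct, and its skeleton matches the paper's: both expand the generators from Theorem \ref{GeneralSpivakThm} into classes $[V\times^R_A W_1\times^R_A\cdots\times^R_A W_m\to X]$ with regular factors, reduce to integral factors, use the flat/vertical dichotomy over a discrete valuation ring, and observe that the all-flat case yields a classical lci scheme (settling the field case outright). The divergence is in how vertical factors are handled. The paper notes that with \emph{at most one} vertical factor the derived fibre product is still classical (one flat factor suffices to kill the higher Tor's), and that with \emph{two or more} vertical factors the product $V\times^R_A W$ is the derived vanishing locus of the zero section of $\Oc$ on $V\times_\kappa W$, so its class is $f_*(c_1(\Oc)\bullet 1)=0$ and such generators simply vanish. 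You instead factor $Y\to X$ through the special fibre $X_0=X\times^R_A\Spec(\kappa)$ whenever any factor is vertical, apply the already-proved field case over the perfect residue field $\kappa$ (legitimately, since the characteristic exponent of $\kappa$ is the same $e$), and push forward along the closed immersion $X_0\hook X$. Both routes are sound and non-circular. The paper's explicit computation buys the stronger observation that doubly-vertical products contribute zero --- the same $c_1(\Oc)=0$ mechanism that reappears in Corollary \ref{CobRingCor} and Proposition \ref{GeneratorProp} --- whereas your reduction avoids computing any derived tensor product at the cost of an extra pushforward step. One small imprecision to note: in the all-flat case, lci morphisms are stable under \emph{Tor-independent} (here flat) base change rather than arbitrary classical base change; since all your factors are flat over $A$ this is harmless, but the blanket ``closed under base change'' should be qualified.
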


\begin{cor*}[Corollary \ref{PerfectSpivakCor}]
If $k$ is a perfect field, then the $e$-inverted bordism groups $\Omega_\bullet(X)[e^{-1}]$, where $X$ is a quasi-projective derived $k$-scheme, are generated as $\Zb[e^{-1}]$-modules by classes of smooth $k$-varieties mapping projectively to $X$.
\end{cor*}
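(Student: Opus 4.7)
The plan is to deduce this corollary directly from Theorem~\ref{GeneralSpivakThm} together with the standard fact that, over a perfect field $k$, every regular finite type $k$-scheme is smooth over $k$.

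Applying Theorem~\ref{GeneralSpivakThm} with $A = k$ produces two generation statements to combine. First, $\Omega^*(\Spec k)[e^{-1}]$ is generated as a $\Zb[e^{-1}]$-algebra by classes $[W \to \Spec k]$ with $W$ a regular projective $k$-scheme. Second, $\Omega_\bullet(X)[e^{-1}]$ is generated as an $\Omega^*(\Spec k)[e^{-1}]$-module by classes $[V \to X]$ with $V$ regular and $V \to X$ projective. Since $X$ is quasi-projective over $k$, each such $V$ is itself quasi-projective, hence of finite type, over $k$; the $W$ above are also of finite type over $k$. By perfectness of $k$, the regularity of both $V$ and $W$ upgrades to smoothness over $k$, and after decomposing into connected components we may assume each is a smooth $k$-variety.

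To finish I would unravel the combined generation using the bivariant product. A general element of $\Omega_\bullet(X)[e^{-1}]$ is a $\Zb[e^{-1}]$-linear combination of expressions of the form $[W_1 \to \Spec k] \cdots [W_n \to \Spec k] \cdot [V \to X]$, where each cohomology class is first pulled back along the structure morphism $X \to \Spec k$ and then multiplied into $[V \to X]$ by the bivariant product. Geometrically this product is represented by $[W_1 \times_k \cdots \times_k W_n \times_k V \to X]$: the fibre product of smooth $k$-schemes over $\Spec k$ is again smooth over $k$ (and the derived and classical fibre products agree, since each $W_i$ is flat over $\Spec k$), while the resulting map to $X$ factors as a base change of the projective morphism $W_1 \times_k \cdots \times_k W_n \to \Spec k$ followed by the projective map $V \to X$, and is therefore projective.

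The argument contains no serious obstacle beyond Theorem~\ref{GeneralSpivakThm} itself; the only step to verify is the geometric description of the bivariant product of pulled-back cohomology classes from $\Spec k$ with a bordism class on $X$ in terms of the fibre product over $k$, which is immediate from the construction of the bivariant product of $\Omega^\bullet$ used throughout the paper.
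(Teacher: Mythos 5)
Your proposal is correct and follows essentially the same route as the paper: apply Theorem \ref{GeneralSpivakThm}, use perfectness of $k$ to upgrade regularity of the generators to smoothness over $k$, and observe that the (derived $=$ classical, over a field) fibre products over $\Spec(k)$ appearing in the module/algebra structure remain smooth since smoothness is stable under (derived) base change. Nothing is missing.
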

\noindent Note that these results are not strictly speaking generalizations of the results of Lowrey--Schürg: since we do not have a classically defined (geometric) bordism theory against which to compare $\Omega_\bullet$, the best statement we can hope for is this kind of a result stating that classical cycles are enough to generate the bordism groups. Notice also how these results bring us closer to proving Conjecture \ref{CobordismOfLocalRingsConj} after inverting the residual characteristic: instead of classifying all quasi-smooth and projective derived $A$-schemes up to cobordism, we only have to classify the lci projective $A$-schemes, or even just smooth projective $k$-varieties if $A = k$ is a perfect field, up to cobordism, and this seems like a much easier problem (and can be done in low dimensions, as we shall see below).

Most of the article is dedicated to proving the above results. The basic idea is to compute the fundamental class of a quasi-projective derived scheme $X$ by resolving the singularities of the deformation to the normal cone of the truncated inclusion $X_\cl \hook U$, where $U$ is an open subscheme of $\Pb^n_A$, and then using the special presentations of Chern classes given by Proposition \ref{RegLineChernLem}. Carrying out  this strategy is made difficult by the facts that desingularization by alterations, unlike Hironaka's resolution in characteristic 0, is not a precision tool as it may drastically change the geometry outside the singular locus, and the technique does not provide birational resolutions. To solve the first problem, we need to be able to approximate classes of generically finite morphisms to projective space, which is achieved by Theorem \ref{SecondRefinedPBFThm}, and to solve the second problem we need to invert the residual characteristic. Note that it is not easy to compute the classes of generically finite morphisms is general without localization exact sequences, which is why we only do the computation when the target is a projective space. Even this case is not simple, and Theorem \ref{SecondRefinedPBFThm} is the main reason we have to restrict our attention to the case where the base ring $A$ is an excellent Henselian discrete valuation ring having a perfect residue field instead of a more general excellent discrete valuation ring. In order to generalize further to, say, the case where $A$ an excellent regular local ring of Krull-dimension $\leq 3$, one would need to generalize also the Bertini-regularity theorems from \cite{ghosh-krishna} to hold in this generality.


Theorem \ref{GeneralSpivakThm}, Corollary \ref{StrongerSpivakCor} and especially Corollary \ref{PerfectSpivakCor} considerably simplify the study of algebraic bordism whenever they apply, as derived schemes are much harder to study than classical schemes, let alone smooth varieties over a field. As the first immediate corollary, we obtain the following vanishing result.

\begin{cor*}[Corollary \ref{VanishCor}]
Let $X$ be a quasi-projective $A$-variety, with $A$ as in Theorem \ref{GeneralSpivakThm}. Then the groups 
$$\Omega^A_i(X) := \Omega^{-i}\big(X \to \Spec(A)\big)$$
vanish for $i < -1$. If $A = k$ is a field, then $\Omega^k_i(X)$ vanish for $i < 0$.
\end{cor*}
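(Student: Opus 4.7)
The plan is to reduce via Corollary \ref{StrongerSpivakCor} to a set of manageable generators of $\Omega^A_i(X)$ and then read off the possible virtual dimensions. Concretely, after applying that corollary, every class is represented by cycles $[V \to X]$ where $V$ is a classical lci scheme and the structure map $V \to \Spec(A)$ is either flat or factors through the closed point of $\Spec(A)$. The index $i$ of such a generator is precisely the relative virtual dimension of $V \to \Spec(A)$, so the vanishing claim becomes a statement about how negative this virtual dimension can be.

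The two cases are then handled separately. If $V \to \Spec(A)$ is flat and lci of relative Krull dimension $d$, then since $V$ has nonempty generic fibre we have $d \geq 0$, and since a flat lci map is quasi-smooth, the relative virtual dimension equals $d \geq 0$. If instead $V \to \Spec(A)$ factors as $V \to \Spec(k) \hookrightarrow \Spec(A)$ for the closed point $\Spec(k)$, then the relative virtual dimensions compose additively: $V \to \Spec(k)$ has relative virtual dimension $\dim V \geq 0$, while $\Spec(k) \hookrightarrow \Spec(A)$ is a quasi-smooth closed immersion of virtual codimension $1$ (being cut out by a uniformizer of $A$), so the composite has relative virtual dimension $\dim V - 1 \geq -1$. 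The smallest possible index of a generator is therefore $i = -1$, realised only by a residue-field point, so $\Omega^A_i(X) = 0$ for $i < -1$. When $A = k$ is a field, the closed-point case is vacuous and every generator has $i = \dim V \geq 0$, yielding the stronger vanishing for $i < 0$.

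The principal subtlety is that this argument depends crucially on the Spivak-type reduction to classical lci generators; for derived quasi-smooth $V \to \Spec(A)$ the relative virtual dimension can be arbitrarily negative, so it is precisely the passage to classical cycles (possible after inverting the residual characteristic exponent $e$, as in Corollary \ref{StrongerSpivakCor}) that forces the bound $i \geq -1$. The only honest arithmetic step is the virtual-dimension bookkeeping in the closed-point case, which is immediate once one recognises $\Spec(k) \hookrightarrow \Spec(A)$ as quasi-smooth of virtual codimension $1$; beyond that the corollary is a direct numerical consequence of the generation statement.
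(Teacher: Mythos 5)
Your argument is correct and is exactly the paper's proof: Corollary \ref{StrongerSpivakCor} reduces to classical lci generators, and the paper's one-line justification (``lci $A$-schemes can not have arbitrarily negative relative virtual dimension'') is precisely the virtual-dimension bookkeeping you carry out explicitly in the flat and closed-point cases. Your write-up just supplies the details the paper leaves implicit.
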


Note that the above result is far from obvious without the algebraic Spivak's theorem, as there exists an abundance of derived schemes having negative virtual dimension. One can also imagine such vanishing results being very useful, as they allow proving results by induction on degree, rather than by some ad hoc induction scheme. Another easy application is the computation of the cobordism rings in low degrees.

\begin{cor*}[Corollary \ref{CobRingCor}]
Let $A$ be as in Theorem \ref{GeneralSpivakThm}. Then 
\begin{enumerate}
\item $\Omega^i\big( \Spec(A) \big)[e^{-1}]$ vanishes for $i>0$, and the natural map 
$$\Zb[e^{-1}] \cong \Lb^0[e^{-1}] \to \Omega^0\big( \Spec(k) \big)[e^{-1}]$$
is an isomorphism; in other words $\Omega^0\big( \Spec(A) \big)$ is the free $\Zb[e^{-1}]$-module generated by $1_{A}$;

\item if $A = k$ is a field, then the natural map 
$$\Lb^*[e^{-1}] \to \Omega^*\big( \Spec(k) \big)[e^{-1}]$$
is an isomorphism in degrees $0, -1$ and $-2$; in other words $\Omega^1\big( \Spec(k) \big)$ is the free $\Zb[e^{-1}]$-module generated by $[\Pb^1_k]$ and $\Omega^2 \big( \Spec(k) \big)$ is the free $\Zb[e^{-1}]$-module generated by $[\Pb^1_k]^2$ and $[\Sigma_{1,k}]$, where $\Sigma_{1,k}$ is the Hirzebruch surface of degree 1 over $k$.
\end{enumerate}
\end{cor*}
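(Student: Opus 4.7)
The plan is to combine the algebraic Spivak theorem, which narrows the generators of the cobordism ring to a short explicit list, with standard arguments (dimension counting, rank/degree maps, deformation to the fibre, Chern-number bookkeeping) to pin down their relations in low cohomological degree.

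\textbf{Step 1 (Vanishing).}  By Theorem \ref{GeneralSpivakThm}, $\Omega^*(\Spec(A))[e^{-1}]$ is generated as a graded $\Zb[e^{-1}]$-algebra by classes $[V\to\Spec(A)]$ with $V$ regular projective over $A$. Each such generator sits in cohomological degree $-\dim_A V \leq 0$, and the bivariant product adds cohomological degrees, so no nonzero class can appear in positive degree. This proves the vanishing statement of Part (1).

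\textbf{Step 2 (Degree zero).}  Inspecting the same generation result in degree $0$ leaves only classes $[V\to\Spec(A)]$ with $V=\Spec(B)$ for $B$ a finite regular $A$-algebra. Corollary \ref{StrongerSpivakCor} further restricts to those with $V\to\Spec(A)$ flat, the classes supported on the closed point of $\Spec(A)$ lying in strictly higher cohomological degree and thus vanishing by Step 1. The rank homomorphism $\deg\colon\Omega^0(\Spec(A))[e^{-1}]\to\Zb[e^{-1}]$, obtained from the Conner--Floyd reduction of property (5), satisfies $\deg(1_A)=1$ and $\deg([V])=\mathrm{rank}_A(B)$, retracting $\Zb[e^{-1}]\to \Omega^0(\Spec(A))[e^{-1}]$; the latter map is therefore split injective and it suffices to prove $[V]=\mathrm{rank}_A(B)\cdot 1_A$. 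Since the residue field of $A$ is perfect and $e$ is inverted, $B/A$ is tame, and one can present it Zariski-locally on $\Spec(B)$ as $A[t]/(f(t))$ with $f$ monic of degree $n=\mathrm{rank}_A(B)$; the finite flat morphism $\Ab^1_A\to\Ab^1_A$ given by $s\mapsto f(t)$ has $V$ as fibre over $s=0$, and an induction on $n$ using the factorisation $f(t)-f(c_0)=(t-c_0)g(t)$ to split off one linear factor at each step, together with deformation invariance of the bivariant fibre class, yields $[V]=n\cdot 1_A$.

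\textbf{Step 3 (Degrees $-1$ and $-2$ for $A=k$).}  Combining the dimension count with Step 2 (used to absorb zero-dimensional factors as integers), $\Omega^{-1}(\Spec(k))[e^{-1}]$ is $\Zb[e^{-1}]$-spanned by the classes $[C]$ of smooth projective curves $C/k$, and $\Omega^{-2}(\Spec(k))[e^{-1}]$ by $[\Pb^1_k]^2$ and the classes $[S]$ of smooth projective surfaces. The Chern-class calculus of property (4) gives the Todd genus $\chi(\Oc_\bullet)\colon\Omega^{-1}(\Spec(k))[e^{-1}]\to\Zb[e^{-1}]$ with $[\Pb^1_k]\mapsto 1$ and $[C]\mapsto 1-g(C)$, together with independent Chern-number homomorphisms $c_1^2$ and $c_2$ in degree $-2$. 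Using Spivak's theorem for the relative bordism over $\Pb^1_k$ one constructs, for each smooth projective curve $C$, a proper family over $\Pb^1_k$ whose special and generic fibres are $C$ and $(1-g(C))\cdot \Pb^1_k$ respectively, giving the equality $[C]=(1-g(C))[\Pb^1_k]$ in $\Omega^{-1}(\Spec(k))[e^{-1}]$ and hence the isomorphism $\Lb^{-1}[e^{-1}]\cong\Omega^{-1}(\Spec(k))[e^{-1}]$. An analogous but more elaborate argument in degree $-2$, together with the invertibility over $\Zb[e^{-1}]$ of the $2\times 2$ Chern-number matrix for $([\Pb^1_k]^2,[\Sigma_{1,k}])$, yields $\Omega^{-2}(\Spec(k))[e^{-1}]\cong\Lb^{-2}[e^{-1}]$ with basis $[\Pb^1_k]^2$ and $[\Sigma_{1,k}]$.

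\textbf{Main obstacle.}  The two most delicate steps are the identification $[V]=n\cdot 1_A$ in Step 2, which requires carefully handling the tame ramification made accessible by inverting $e$ and verifying that the deformation-invariance argument goes through in the bivariant theory, and the identification $[C]=(1-g(C))[\Pb^1_k]$ in Step 3, which requires constructing an explicit cobordism realising the Chern-number equivalence in the $e$-inverted theory. The remaining steps reduce to formal dimension and Chern-number bookkeeping once these are in place.
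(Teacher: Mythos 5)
Your proposal has genuine gaps in all three steps, and the most important one is structural. In part (1) for a discrete valuation ring, the vanishing in positive degrees does \emph{not} follow from degree counting: the generator $[\Spec(\kappa)\hook\Spec(A)]$ has relative virtual dimension $-1$, hence lives in $\Omega^{1}\big(\Spec(A)\big)$, so your claim that every regular generator sits in cohomological degree $\leq 0$ is false. One must additionally show this class vanishes (it is $c_1(\Oc_{\Spec(A)})=0$, the derived vanishing locus of the uniformizer), which the paper does via the surjection $\Omega^0\big(\Spec(\kappa)\big)[e^{-1}]\to\Omega^1\big(\Spec(A)\big)[e^{-1}]$. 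Likewise in degree $0$ the non-flat generators are \emph{not} ``in strictly higher cohomological degree'': a smooth projective curve $C$ over $\kappa$ gives a class in $\Omega^{0}\big(\Spec(A)\big)$. The paper kills these by first proving part (2) over the residue field --- so part (1) for a DVR logically depends on part (2) for $\kappa$ --- writing $[C\to\Spec(\kappa)]=b\cdot 1_{\kappa}$ with $b\in\Lb^{-1}[e^{-1}]$ and then using $[\Spec(\kappa)\hook\Spec(A)]=0$. Your computation $[\Spec(B)]=d$ is also shaky: inverting $e$ in the coefficients does not make $B/A$ tame, and splitting off linear factors of $f$ requires roots in $A$ that need not exist. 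The paper (Lemmas \ref{RegularFiniteDVRClassLem} and \ref{FiniteClassesLem}) instead uses Henselianness and perfectness of $\kappa$ to factor $A\to B$ into two \emph{primitive} (monogenic) extensions and computes each by homogenizing to a section of $\Oc(d)$ on $\Pb^1_A$, where $c_1(\Oc(d))=d\,c_1(\Oc(1))$.

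Step 3 is the most serious gap. The proposed proper family over $\Pb^1_k$ with special fibre $C$ and generic fibre ``$(1-g(C))\cdot\Pb^1_k$'' does not exist (the multiplicity is negative for $g\geq 2$, and genus is constant in flat families of smooth curves), the ``analogous but more elaborate argument'' for surfaces is not supplied, and the $2\times 2$ Chern-number matrix you invoke is in fact singular, since $[\Pb^1_k]^2$ and $[\Sigma_{1,k}]$ both have $(c_1^2,c_2)=(8,4)$. The paper's actual argument is the Levine--Morel birational one: project a smooth variety $X$ of dimension $n\leq 2$ finite-birationally onto a hypersurface $X'\subset\Pb^{n+1}_k$, apply embedded resolution (Abhyankar) to obtain $\pi^{-1}(X')=\wtil{X}+\sum n_i\Ec_i$ with $\wtil X$ smooth, expand $c_1(\Oc(d))\bullet[Y\to\Pb^{n+1}_k]$ with the formal group law, and use Lemmas \ref{ClassOfPBLem} and \ref{ClassInBlowUpLem} to pass between $[\wtil X]$ and $[X]$; separate reductions then handle imperfect fields (base change to the perfection, at the cost of a power of $e$) and finite fields (two coprime-degree extensions). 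Without an argument of this kind, the surjectivity of $\Lb^*[e^{-1}]\to\Omega^*\big(\Spec(k)\big)[e^{-1}]$ in degrees $-1$ and $-2$ --- the heart of part (2) --- remains unproved.
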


\noindent We are hopeful that more progress towards the computation of algebraic cobordism of fields can be made in the future.

Another application, which is no longer an immediate corollary of the Algebraic Spivak's theorem, is the following extension result.
\begin{thm*}[Theorem \ref{ExtensionThm}]
Let $A$ be a field or an excellent Henselian discrete valuation ring with a perfect residue field, and let $j: X \hook \overline X$ be an open embedding of quasi-projective derived $A$-schemes. Then the pullback morphism
$$j^!: \Omega_\bullet(\overline X)[e^{-1}] \to \Omega_\bullet(X)[e^{-1}]$$
is surjective.
\end{thm*}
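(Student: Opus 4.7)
The approach is to induct on $\dim \overline X$, combined with Algebraic Spivak generation. By Corollary \ref{StrongerSpivakCor}, $\Omega_\bullet(X)[e^{-1}]$ is generated as a $\Zb[e^{-1}]$-module by cycles $[V \to X]$ with $V$ a classical complete intersection scheme and $V \to X$ projective, so it suffices to lift each such generator. The base case $\dim \overline X \le 0$ is immediate, since $j$ is then either the identity or has empty source. For the inductive step, fix $j: X \hookrightarrow \overline X$ with $\dim \overline X = n$ and a generator $[V \to X]$.

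To build a candidate lift, first compactify: choose a closed immersion $V \hookrightarrow \Pb^N_X$ over $X$, compose with $\Pb^N_X \hookrightarrow \Pb^N_{\overline X}$, and let $\overline V$ be the scheme-theoretic closure of $V$ in $\Pb^N_{\overline X}$. Then $\overline V \to \overline X$ is projective and $\overline V \times_{\overline X} X \simeq V$. The scheme $\overline V$ is generally not regular, so apply a prime-to-$e$ alteration---available under the hypotheses on $A$ by results of Gabber and Temkin---to obtain $\pi: W \to \overline V$ projective with $W$ regular and $\pi$ generically finite of some degree $d$ invertible in $\Zb[e^{-1}]$. Define the candidate lift $\beta := \tfrac{1}{d}[W \to \overline X] \in \Omega_\bullet(\overline X)[e^{-1}]$; by base change along the smooth morphism $j$, $j^!\beta = \tfrac{1}{d}[W \times_{\overline X} X \to X]$.

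The key step is to show that $\tfrac{1}{d}[W \times_{\overline X} X \to X] - [V \to X]$ lies in the image of the pushforward $i_*: \Omega_\bullet(Y)[e^{-1}] \to \Omega_\bullet(X)[e^{-1}]$ for some closed subscheme $Y \subsetneq X$ of dimension strictly less than $n$; concretely, $Y$ should contain the image in $X$ of the non-étale locus of $\pi$ over $V$. Granting this, let $\overline Y$ be the closure of $Y$ in $\overline X$, so $\dim \overline Y = \dim Y < n$. The inductive hypothesis applied to $Y \hookrightarrow \overline Y$ then provides a lift $\tilde\alpha \in \Omega_\bullet(\overline Y)[e^{-1}]$ of the error class, and pushing $\tilde\alpha$ forward along the closed immersion $\overline Y \hookrightarrow \overline X$ and subtracting from $\beta$ produces a class in $\Omega_\bullet(\overline X)[e^{-1}]$ whose $j^!$-pullback is exactly $[V \to X]$.

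The main obstacle is the decomposition in the previous paragraph. In Chow theory this follows immediately from the generic degree of the alteration, but in derived cobordism the pushforward of a generically finite morphism carries Euler-class-type corrections that must be tracked. I expect this to be handled by decomposing $W \times_{\overline X} X$ into components dominating $V$---where the generic degree forces the leading term $d \cdot [V \to X]$ modulo terms supported on the non-étale locus---and components with lower-dimensional image in $V$ (automatically supported on a proper closed subscheme), together with an invocation of the refined pushforward results developed in Theorem \ref{SecondRefinedPBFThm}, which already play a central role in the proof of the Algebraic Spivak theorem itself.
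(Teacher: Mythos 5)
Your strategy founders on exactly the step you flag as the main obstacle, and that step cannot be repaired with the tools available in this theory. To run your induction you need
$$\tfrac{1}{d}\,[W \times^R_{\overline X} X \to X] - [V \to X]$$
to lie in the image of $i_*: \Omega_\bullet(Y)[e^{-1}] \to \Omega_\bullet(X)[e^{-1}]$ for some proper closed $Y \subset X$. In Chow groups, or in Levine--Morel bordism over a characteristic-$0$ field, this follows from exactness of the localization sequence in the middle: the difference dies over the locus where $\pi$ is finite flat of degree $d$, hence comes from the closed complement. No such exactness is available here --- the surjectivity of $j^!$ is precisely what is being proved, and the paper explicitly notes that nothing can be said about $\ker j^!$. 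Equivalently, your key step amounts to computing the class of a generically finite projective morphism onto an arbitrary (singular, merely lci) target $V$, and the paper stresses that such computations are out of reach except when the target is $\Pb^n_A$: Theorem \ref{SecondRefinedPBFThm} and Corollary \ref{ThirdRefinedPBFCor} are proved only for generically finite morphisms \emph{to projective space}, where the projective bundle formula substitutes for localization. So the invocation of Theorem \ref{SecondRefinedPBFThm} does not apply to $W \times^R_{\overline X} X \to V$, and the assertion that the generic degree ``forces the leading term $d\cdot[V\to X]$ modulo supported terms'' is unsubstantiated. (The components of $W\times^R_{\overline X}X$ with lower-dimensional image are indeed harmless, since their classes factor through a closed subscheme; the dominating components are the problem.)

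The paper's proof is organized so as never to compute the class of a generically finite morphism except over $\Pb^n_A$. It reduces, via Theorem \ref{GeneralSpivakThm} and pushforward from the closure $\overline V$, to extending the fundamental class $1_{V/A}$ of a regular $V$ across $V \hook \overline V$, and argues by induction on the virtual dimension (degree) rather than on $\dim \overline X$, with Corollary \ref{VanishCor} as the base case. Concretely, $1_{V/A}$ is expressed through the deformation to the normal cone of $V_\cl \hook U \subseteq \Pb^n_A$, desingularized by an $e$-alteration of the whole deformation space: the fibre over $0$ is generically finite over $\Pb^n_A$ itself, so Corollary \ref{ThirdRefinedPBFCor} applies; the fibre over $\infty$ contributes a Chern-class expression on an snc scheme which extends across open immersions by Lemma \ref{ExtendingOnSNCLem}; and the remaining terms $\alpha_i.(c_1(\Oc(1))^i \bullet 1_{V/A})$ have strictly smaller degree and are covered by the inductive hypothesis. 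To salvage your outline you would have to replace your key step by something equivalent to this deformation argument.
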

\noindent There are two main obstructions for proving such a result: first of all, it is unknown whether or not one can ``compactify'' derived schemes, and second of all, even if this was possible, one would then still need to ``resolve'' the singularities of the derived scheme obtained in this way to arrive at a derived complete intersection model, which seems like an even harder problem.  The first problem is solved essentially by invoking algebraic Spivak's theorem, so the proof of the Extension theorem is dedicated to overcoming the second obstruction. Note also that this result is the right side of the conjectural localization exact sequence (known in characteristic 0 by the work of Levine and Morel), but it seems hard to say anything worthwhile about the kernel of $j^!$.

Combining the Extension theorem with the projective bundle formula, we obtain the following homotopy invariance result.

\begin{cor*}[Corollary \ref{HomotopyInvarianceCor}]
Let $A$ be as in Theorem \ref{ExtensionThm}, let $X$ be a quasi-projective derived $A$-scheme and let $p: E \to X$ be a vector bundle of rank $r$ on $X$. Then the pullback map
$$p^!: \Omega_\bullet(X)[e^{-1}] \to \Omega_\bullet(E)[e^{-1}]$$
is an isomorphism.
\end{cor*}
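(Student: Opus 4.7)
The plan is to handle injectivity and surjectivity separately, with the former being essentially automatic and the latter being the substantive content. For injectivity, I would use that the zero section $s: X \hook E$ is a regular closed immersion splitting $p$, so the contravariant functoriality of bivariant pullback yields $s^! \circ p^! = \mathrm{id}$ on $\Omega_\bullet(X)[e^{-1}]$; this exhibits $p^!$ as a split monomorphism.

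For surjectivity, I would compactify $E$ by forming the projective bundle $\bar{p}: \overline E := \Pb(E \oplus \Oc_X) \to X$, and let $j: E \hook \overline E$ denote the open complement of the hyperplane at infinity $\Pb(E) \subset \overline E$. Theorem \ref{ExtensionThm} applied to $j$ supplies the surjectivity
$$j^!: \Omega_\bullet(\overline E)[e^{-1}] \twoheadrightarrow \Omega_\bullet(E)[e^{-1}],$$
while the bivariant projective bundle formula (property (3)) writes every class on $\overline E$ uniquely as a sum $\sum_{i=0}^r \xi^i \bullet \bar p^!(\beta_i)$, where $\xi := c_1\bigl(\Oc_{\overline E}(1)\bigr)$ and $\beta_i \in \Omega_{\bullet + i}(X)[e^{-1}]$. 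The crucial geometric observation is that the tautological line bundle $\Oc_{\overline E}(-1)$ is canonically trivialized over the affine chart $E \subset \overline E$: at a point $[\mathbf v : 1]$ the tautological line is spanned by $(\mathbf v,1)$, and projection onto the $\Oc_X$-summand gives a nowhere vanishing section. Hence $j^* \xi = 0$, and the naturality of the bivariant product together with $j^! \circ \bar p^! = p^!$ collapses the decomposition to
$$j^! \Bigl(\sum_{i=0}^r \xi^i \bullet \bar p^!(\beta_i)\Bigr) = p^!(\beta_0).$$
In particular, the image of $j^!$ is contained in (and, by taking $\beta_i = 0$ for $i \geq 1$, equal to) the image of $p^!$, whence surjectivity of $p^!$ follows from surjectivity of $j^!$.

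Given the two inputs, the argument is essentially formal, so I do not expect a serious obstacle. The one item that will require care is verifying the compatibility between pullback along the open immersion $j$ and the bivariant product appearing in the PBF decomposition, i.e.\ that $j^!(c \bullet \alpha) = j^*(c) \bullet j^!(\alpha)$ for $c \in \Omega^*(\overline E)$ and $\alpha \in \Omega_\bullet(\overline E)$, together with the identity $j^! \bar p^! = p^!$; both are instances of the standard axioms of a bivariant theory and should be routine to extract from the framework of \cite{annala-base-ind-cob}.
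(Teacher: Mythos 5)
Your argument is correct and coincides with the paper's own proof: injectivity via the zero section splitting $p$, and surjectivity by combining the Extension theorem applied to $j: E \hookrightarrow \Pb(E \oplus \Oc)$ with the projective bundle formula and the vanishing of $j^*c_1(\Oc(1))$, which the paper likewise deduces from the tautological section of $\Oc(1)$ whose derived vanishing locus is the hyperplane at infinity $\Pb(E)$. The compatibilities you flag at the end ($j^!(c \bullet \alpha) = j^*(c) \bullet j^!(\alpha)$ and $j^! \circ \bar p^! = p^!$) are indeed the standard bivariant axioms and are used implicitly in the paper as well.
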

\noindent While this result was expected as the cobordism analogue of $\Ab^1$-invariance of Chow groups and of Grothendieck groups of coherent sheaves, the reader should compare it with the fact that the cobordism rings $\Omega^*(X)$ are usually not $\Ab^1$-invariant.

\subsection{Conventions}

All derived schemes are assumed to be Noetherian and of finite Krull dimension. Derived fibre product is denoted by $X \times^R_Z Y$, and truncation is denoted by $X_\cl$. A map of derived schemes is called \emph{projective} if it is proper and admits a relatively ample line bundle. The \emph{Krull dimension} of a derived scheme means the Krull dimension of its truncation.  A flat lci morphism is called \emph{syntomic}. An \emph{snc scheme} is a scheme that can be globally expressed as an snc divisor inside a regular scheme. We will denote
$$[r] := \{1,..,r\}.$$

\subsection*{Acknowledgements}

The author would like to thank his advisor Kalle Karu for discussions and comments. The author is supported by Vilho, Yrj\"o and Kalle V\"ais\"al\"a Foundation of the Finnish Academy of Science and Letters.

\section{Background}


In this section we recall the necessary background material.

\subsection{Quasi-smooth morphisms and derived complete intersection schemes}

In this section we review the background of various notions of derived complete intersections that we are going to use in the work. Let us recall that a closed embedding $Z \hook X$ of derived schemes is called a \emph{derived regular embedding (of virtual codimension $r$)} if it is locally on $X$ of form
$$\Spec\big(A \modmod (a_1, ..., a_r)\big) \hook \Spec(A),$$
i.e., $Z$ is locally given as the derived vanishing locus of $r$ functions on $X$. Let us begin with the following definition.

\begin{defn}
A finite type morphism $f: X \to Y$ of Noetherian derived schemes is called \emph{quasi-smooth} if the relative cotangent complex $\Lb_{X/Y}$ has Tor-dimension $\leq 1$ (it follows that $\Lb_{X/Y}$ is perfect and $f$ is of finite presentation, see e.g. \cite{annala-base-ind-cob} Proposition 2.23). If $\Lb_{X/Y}$ has constant virtual rank $d$ on $X$, then we say that $f$ is quasi-smooth of \emph{relative virtual dimension $d$}.
\end{defn}

Let us then recall the basic properties of quasi-smooth morphisms.

\begin{prop}
\begin{enumerate}
\item Quasi-smooth morphisms are stable under composition and derived base change. Relative virtual dimension is additive under composition, and is preserved in derived pullbacks.

\item A morphism of classical schemes is quasi-smooth if and only if it is lci. 

\item A closed embedding is quasi-smooth if and only if it is a derived regular embedding.
\end{enumerate}
\end{prop}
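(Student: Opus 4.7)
The plan for (1) is to invoke the fundamental cofiber sequence of the cotangent complex. For a composable pair $X \xrightarrow{f} Y \xrightarrow{g} Z$ of quasi-smooth maps, the triangle
$$f^*\Lb_{Y/Z} \longrightarrow \Lb_{X/Z} \longrightarrow \Lb_{X/Y}$$
has outer terms that are perfect with Tor-amplitude in $[-1,0]$, since derived pullback preserves both perfectness and Tor-amplitude; these properties are closed under extensions of such two-term perfect complexes, so $\Lb_{X/Z}$ inherits them, and the relative virtual dimension is additive because virtual rank is additive in cofiber sequences of perfect complexes. For derived base change along $Y' \to Y$, the base-change formula $\Lb_{X'/Y'} \simeq p^*\Lb_{X/Y}$ for the induced map $p: X' := X \times^R_Y Y' \to X$, together with the same preservation properties of derived pullback, handles the claim.

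Part (3) is the key case, and should be proved before (2). The forward direction is a direct Koszul-type computation: a derived regular embedding is locally of the form $\Spec(A \modmod (a_1, \ldots, a_r)) \hook \Spec(A)$, whose cotangent complex is $\Oc_Z^{\oplus r}[1]$, perfect with Tor-amplitude $\{-1\}$. For the converse, given a quasi-smooth closed embedding $i: Z \hook X$, the first observation is that $H^0(\Lb_{Z/X}) = \Omega_{Z/X} = 0$ because $i$ is a surjection on structure sheaves; combined with connectivity, perfectness, and Tor-amplitude $\leq 1$, this forces $\Lb_{Z/X}$ to sit in cohomological degree $-1$ and to be of the form $\Nc^\vee[1]$ for a vector bundle $\Nc^\vee$ on $Z$. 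Working Zariski-locally on $X = \Spec(A)$, the plan is to choose a basis of $\Nc^\vee$, lift it to functions $a_1, \ldots, a_r \in A$ vanishing on $Z_\cl$, and use this data to produce a map $Z \to \Spec(A \modmod (a_1, \ldots, a_r))$ over $X$; equivalence of this map then follows from checking that it induces an isomorphism on cotangent complexes, since both sides are identified with the trivialized $\Nc^\vee[1]$ by construction.

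For (2), if $f: X \to Y$ is classical lci, one locally factors $f$ as $X \hook W \to Y$ with $W \to Y$ smooth and $X \hook W$ a classical regular embedding; then (1) and the forward direction of (3) yield that $\Lb_{X/Y}$ is perfect of Tor-amplitude $\leq 1$. Conversely, if $f$ is quasi-smooth and classical, then Zariski-locally $f$ factors as a classical closed embedding $i: X \hook \Ab^n_Y$ followed by the smooth projection $\Ab^n_Y \to Y$; part (1) forces $i$ to be quasi-smooth, part (3) then identifies $i$ with a derived regular embedding, and classicality of $X$ and $\Ab^n_Y$ means this is equivalent to $i$ being a classical regular embedding, so $f$ is lci.

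The hardest step will be the converse in (3), where the abstract identification $\Lb_{Z/X} \simeq \Nc^\vee[1]$ must be promoted to an actual map of derived schemes realizing it; this requires constructing strict-vanishing lifts of the $a_i$ on $Z$ and then running a Nakayama-type argument at the level of structure sheaves to check that the induced map is an equivalence. Once (3) is in hand, (1) and (2) reduce to formal manipulation of cotangent complex triangles and of the standard preservation properties of perfect complexes and Tor-amplitude under derived pullback.
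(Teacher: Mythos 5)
Your proposal is correct in outline, but note that the paper does not actually prove this proposition: it dismisses (1) as obvious, (2) as classical, and cites \cite{khan-rydh} Proposition 2.3.8 for (3). What you have written is essentially a reconstruction of the standard arguments behind those citations. For (1), the transitivity triangle $f^*\Lb_{Y/Z} \to \Lb_{X/Z} \to \Lb_{X/Y}$ together with stability of perfectness and Tor-amplitude under extensions and derived pullback, and the base-change equivalence $\Lb_{X'/Y'} \simeq p^*\Lb_{X/Y}$, is exactly the intended "obvious" argument. For (2), the local factorization through $\Ab^n_Y$ combined with (1) and (3) is the standard reduction. The one place where genuine work is required is the converse direction of (3): promoting the abstract identification $\Lb_{Z/X} \simeq \Nc^\vee[1]$ to a local presentation $Z \simeq \Spec\big(A \modmod (a_1,\dots,a_r)\big)$ requires lifting generators of the conormal bundle to functions equipped with null-homotopies on $Z$ and then verifying the resulting comparison map is an equivalence (surjectivity on $\pi_0$ plus the cotangent-complex criterion, or the Nakayama-type argument you mention). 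You correctly flag this as the hard step, and it is precisely the content of the cited Khan--Rydh result, so your sketch is sound; if this were to be written out in full rather than cited, that step is the only one needing careful expansion.
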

\begin{proof}
The first claim is obvious, the second is classical, and the third is \cite{khan-rydh} Proposition 2.3.8.
\end{proof}

We are also going to need the following absolute version of quasi-smoothness.

\begin{defn}
A Noetherian derived scheme $X$ is called a \emph{derived complete intersection scheme} if only finitely many of the homotopy sheaves $\pi_i(\Oc_X)$ are nontrivial, and if for all points $x \in X$ the cotangent complex $\Lb_{\kappa(x) / X}$ has Tor-dimension $\leq 2$, where $\kappa(x)$ is the residue field of $X_\cl$ at $x$.
\end{defn}

Derived complete intersection schemes admit the following alternative characterization.

\begin{prop}\label{DCIAltCharProp}
Let $X$ be a Noetherian derived scheme. Then the following are equivalent:
\begin{enumerate}
\item $X$ is a derived complete intersection scheme;
\item the cotangent complex $\Lb_{X / \Zb}$ has Tor-dimension $\leq 1$;
\item for all morphisms $X \to Y$ with $Y$ a regular scheme, the relative cotangent complex $\Lb_{X / Y}$ has Tor-dimension $\leq 1$;
\item there exists a morphism $X \to Y$ with $Y$ regular so that the relative cotangent complex $\Lb_{X / Y}$ has Tor-dimension $\leq 1$.
\end{enumerate}
\end{prop}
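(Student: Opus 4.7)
My plan is to establish the cycle (3) $\Rightarrow$ (4) $\Rightarrow$ (2) $\Rightarrow$ (1) $\Rightarrow$ (3), which yields all four equivalences; the first implication is trivial since every derived scheme admits the structure morphism $X \to \Spec(\Zb)$ with $\Spec(\Zb)$ regular.

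The three nontrivial implications are all long exact sequence chases on transitivity cofiber sequences $g^*\Lb_{Y/Z} \to \Lb_{X/Z} \to \Lb_{X/Y}$ associated to composable morphisms $X \xrightarrow{g} Y \to Z$, built on two basic inputs. First, for every field $\kappa$ the complex $\Lb_{\kappa/\Zb}$ is concentrated in homological degrees $[0,1]$: this follows from the transitivity sequence for $\Zb \to \Fb_p \to \kappa$ (or $\Zb \to \Qb \to \kappa$), the identity $\Lb_{\Fb_p/\Zb} \simeq \Fb_p[1]$, and the fact that $\Lb_{K/k}$ lies in degrees $[0,1]$ for any field extension $K/k$. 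Second, for every regular Noetherian local ring $(R,\mfr,\kappa)$ the complex $\Lb_{\kappa/R}$ is canonically $\mfr/\mfr^2$ placed in homological degree $1$, since $\kappa$ is the derived quotient of $R$ by a regular system of parameters. Granting these, one argues as follows. For (4) $\Rightarrow$ (2) the two inputs applied to $\Lb_{Y/\Zb}\otimes^L \kappa(y) \to \Lb_{\kappa(y)/\Zb} \to \Lb_{\kappa(y)/Y}$ force $\Lb_{Y/\Zb}$ into Tor-dimension $\leq 1$ for $Y$ regular, and then the transitivity sequence for $X \to Y \to \Spec(\Zb)$ gives Tor-dimension $\leq 1$ for $\Lb_{X/\Zb}$. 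For (2) $\Rightarrow$ (1), the transitivity sequence for $\Spec(\kappa(x)) \to X \to \Spec(\Zb)$ has its first two terms in $[0,1]$ (by (2) and the first input), so the long exact sequence places $\Lb_{\kappa(x)/X}$ in $[0,2]$, which is Tor-dimension $\leq 2$ over the field $\kappa(x)$. For (1) $\Rightarrow$ (3), given $f: X \to Y$ with $Y$ regular and $x \in X$ mapping to $y \in Y$, the two inputs applied to $\Spec(\kappa(x)) \to \Spec(\kappa(y)) \to Y$ place $\Lb_{\kappa(x)/Y}$ in $[0,1]$, and combining with hypothesis (1) through the transitivity sequence for $\Spec(\kappa(x)) \to X \to Y$ then gives $\Lb_{X/Y}\otimes^L \kappa(x) \in [0,1]$.

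The main technical point is the passage from pointwise Tor-dimension bounds at each residue field to a global Tor-dimension bound. For the cotangent complex of a finite-type morphism this is the standard local criterion for almost perfect complexes, but the absolute cotangent complexes $\Lb_{X/\Zb}$ and $\Lb_{Y/\Zb}$ do not a priori arise from finite-type morphisms, so one must argue stalk-by-stalk and appeal to the analogous statement for cotangent complexes of Noetherian local rings. The finiteness clause of (1), that only finitely many $\pi_i(\Oc_X)$ are nontrivial, is automatic under the standing convention that all derived schemes considered here are Noetherian.
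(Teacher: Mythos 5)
The paper itself does not prove this proposition; it delegates everything to \cite{annala-base-ind-cob}, Proposition 2.28, so your argument is the only one actually on display. Your skeleton is the natural one and is almost certainly close to the cited proof: the cycle $(3)\Rightarrow(4)\Rightarrow(2)\Rightarrow(1)\Rightarrow(3)$, the two inputs ($\Lb_{\kappa/\Zb}$ concentrated in degrees $[0,1]$ for any field $\kappa$, and $\Lb_{\kappa/R}\simeq\mfr/\mfr^2[1]$ for $R$ regular local), and the fiberwise long-exact-sequence chases are all correct. However, two steps contain genuine gaps.

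First, the passage from the fiberwise bounds ``$\Lb_{X/\Zb}\otimes^L\kappa(x)\in[0,1]$ for all $x$'' to the global statement ``$\Lb_{X/\Zb}$ has Tor-dimension $\leq 1$'' is not a technical afterthought: it is the entire content of $(1)\Rightarrow(2)$, of the sub-claim ``$Y$ regular implies $\Lb_{Y/\Zb}$ has Tor-dimension $\leq 1$'' hidden inside your $(4)\Rightarrow(2)$, and of $(1)\Rightarrow(3)$ when $X\to Y$ is not of finite type. The standard detection of Tor-amplitude on residue fields requires the complex to be almost perfect, and neither $\Lb_{X/\Zb}$ nor $\Lb_{X/Y}$ for an arbitrary morphism to a regular scheme is almost perfect. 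Your fallback --- ``argue stalk-by-stalk and appeal to the analogous statement for cotangent complexes of Noetherian local rings'' --- does not identify any statement: over the local ring $\Oc_{X,x}$ the complex $\Lb_{\Oc_{X,x}/\Zb}$ is still not coherent, and bounding its Tor-amplitude from its base changes to residue fields is precisely the unproved point. This step needs an actual input (an Andr\'e--Quillen/Avramov-type theorem for regular, resp.\ quasi-lci, Noetherian local rings, or a reduction to an almost perfect complex via a finite-type factorization), and as written it is missing.

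Second, your claim that the finiteness clause of (1) is automatic for Noetherian derived schemes is false. The derived scheme $\Spec\bigl(k\otimes^L_{k[x]/(x^2)}k\bigr)$ is Noetherian and zero-dimensional --- each $\pi_i$ is a one-dimensional $k$-vector space --- yet has nonvanishing homotopy in every degree. Consequently your $(2)\Rightarrow(1)$ only establishes the cotangent-complex half of the definition of a derived complete intersection and never shows that $\Oc_X$ has bounded homotopy. That boundedness is a real consequence of $\Lb_{X/\Zb}$ having Tor-dimension $\leq 1$ (locally $X$ becomes a quasi-smooth, hence Koszul-type, affine over a polynomial ring; this is the content of \cite{annala-base-ind-cob} Proposition 2.23 invoked earlier in the paper), and it has to be proved rather than dismissed.
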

\begin{proof}
The equivalence of 1, 2 and 3 is \cite{annala-base-ind-cob} Proposition 2.28, but the proof shows also that they are all equivalent to 4.
\end{proof}

\begin{ex}
Following types derived schemes are derived complete intersections:
\begin{enumerate}
\item classical complete intersection schemes (in particular, regular schemes);
\item if $X$ is a derived complete intersection and $f: Y \to X$ is a morphism so that $\Lb_{Y/X}$ has Tor-dimension $\leq 1$ (e.g. if $f$ is quasi-smooth), then $Y$ is a derived complete intersection scheme.
\end{enumerate}
\end{ex}

\subsection{Derived blow ups}

One of the main technical tools we are going to need in this article is the construction of derived blow ups and derived deformation to normal cone from \cite{khan-rydh}. Let us recall the definitions and the results we are going to use:

\begin{defn}\label{DivisorOverZDef}
Let $Z \hookrightarrow X$ be a derived regular embedding. Then, for any $X$-scheme $S$, a \emph{virtual Cartier divisor} on $S$ \emph{lying over $Z$} is the datum of a commutative diagram
\begin{center}
\begin{tikzcd}
D \arrow[hook]{r}{i_D} \arrow[]{d}{g} & S \arrow[]{d}{} \\
Z \arrow[hook]{r}{} & X
\end{tikzcd}
\end{center}
such that
\begin{enumerate}
\item $i_D$ is a derived regular embedding of virtual codimension 1 (i.e., a virtual Cartier divisor);
\item the truncation is a Cartesian square;
\item the canonical morphism
$$g^* \Nc^\vee_{Z/X} \to \Nc^\vee_{D/S}$$
induces a surjection on $\pi_0$.
\end{enumerate}
\end{defn}

It is then possible to define derived blow ups via its functor of points.

\begin{defn}\label{DerivedBlowUpDef}
Let $Z \hookrightarrow X$ be a derived regular embedding. Then the \emph{derived blow up} $\bl_Z(X)$ is the $X$-scheme representing virtual Cartier divisors lying over $Z$. In other words, given an $X$-scheme $S$, the space of $X$-morphisms 
$$S \to \bl_Z(X)$$
is naturally identified with the maximal sub $\infty$-groupoid of the $\infty$-category of virtual Cartier divisors of $S$ that lie over $Z$.
\end{defn}

\begin{thm}\label{BlowUpPropertiesThm}
Let $i: Z \hookrightarrow X$ be a derived regular embedding with $X$ Noetherian. Then
\begin{enumerate}
\item the derived blow up $\bl_Z(X)$ exists as a derived scheme and is unique up to contractible space of choices;

\item the structure morphism $\pi: \bl_Z(X) \to X$ is projective, quasi-smooth, and induces an equivalence 
$$\bl_Z(X) - \Ec \to X - Z,$$
where $\Ec$ is the universal virtual Cartier divisor on $\bl_Z(X)$ lying over $Z$ (also called the \emph{exceptional divisor});

\item the derived blow up $\bl_Z(X) \to X$ is stable under derived base change;

\item the exceptional divisor $\Ec$ is naturally identified with $\Pb_Z(\Nc_{Z/X})$;

\item if $Z \stackrel i \hookrightarrow X \stackrel j \hookrightarrow Y$ is a sequence of quasi-smooth closed embeddings, then there exists a natural derived regular embedding $\tilde j: \bl_Z(X) \hookrightarrow \bl_Z(Y)$ called the \emph{strict transform};

\item given derived regular embeddings $i: Z \hook X$ and $j: Y \hook X$, the strict transforms $\tilde i$ and $\tilde j$ do not meet in $\bl_{Z \cap Y}(X)$;

\item if $Z$ and $X$ are classical schemes (so that $Z \hookrightarrow X$ is lci), there exists a natural equivalence 
$$\bl_Z(X) \simeq \bl_{Z}^\mathrm{cl}(X),$$
where the right hand side is the classical blow up.
\end{enumerate}
\end{thm}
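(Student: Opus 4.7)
The plan is to construct $\bl_Z(X)$ via its local affine model and verify the universal property of Definition \ref{DerivedBlowUpDef}. Since that functor is Zariski-local on $X$, I would reduce to the case $X = \Spec(A)$ and $Z = \Spec\bigl(A \modmod (a_1, \ldots, a_r)\bigr)$. The candidate for $\bl_Z(X)$ is the closed subscheme of $X \times \Pb^{r-1}$ cut out on each standard chart $U_i$ by the derived vanishing of the $r-1$ functions $a_j - t_{ji} a_i$ with $j \neq i$, where $t_{ji} = y_j/y_i$; on each chart this is a quasi-smooth closed embedding into $\Ab^{r-1}_A$ of virtual codimension $r-1$, so the structure map $\pi: \bl_Z(X) \to X$ is automatically projective and quasi-smooth. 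For the universal property, given a virtual Cartier divisor $(D \hook S, g)$ lying over $Z$, the surjection on $\pi_0$ of $g^*\Nc^\vee_{Z/X} \to \Nc^\vee_{D/S}$, which goes from a rank-$r$ bundle to a line bundle on $D$, yields locally on $S$ a map $S \to \Pb^{r-1}$ whose pairing with $S \to X$ lands in $\bl_Z(X)$, because the relations $a_j - t_{ji} a_i$ pull back to data compatible with the inclusion $D \hook S$ lying over $Z \hook X$.

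Globalization of this local construction is formal from the uniqueness granted by the universal property, proving (1). Projectivity, quasi-smoothness, and the equivalence $\bl_Z(X) - \Ec \simeq X - Z$ in (2) all follow from the local model together with the observation that on $X - Z$ the only virtual Cartier divisor lying over $Z$ is the empty one (the source of a $\pi_0$-surjection from a bundle to a line bundle must not be the zero sheaf). Stability under derived base change (3) is immediate from the fact that both sides represent the same functor after base change. For (4), restricting the universal property to those $S$-points whose associated $D$ factors through $Z$ forces $g^*\Nc^\vee_{Z/X}$ to be pulled back from $Z$, and the resulting surjection data matches precisely the universal property of $\Pb_Z(\Nc_{Z/X})$.

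For (5), given $Z \hook X \hook Y$, I would construct $\tilde j: \bl_Z(X) \to \bl_Z(Y)$ via the universal property by viewing the tautological virtual Cartier divisor $\Ec \hook \bl_Z(X)$, which lies over $Z \hook X$, as lying over $Z \hook Y$ under the composition; that $\tilde j$ is a derived regular embedding is checked on the explicit charts of $\bl_Z(Y)$, where the strict transform of $X$ is cut out by dividing the defining equations of $X \hook Y$ by the appropriate powers of the exceptional parameter. Property (6) reduces to an analogous local computation: in charts of $\bl_{Z \cap Y}(X)$, the strict transforms $\tilde i$ and $\tilde j$ are cut out by equations whose vanishing loci become disjoint once the common locus $Z \cap Y$ has been removed by the blow up. For (7), the classical $\bl_Z^\mathrm{cl}(X) = \mathrm{Proj}\bigl(\bigoplus_n I^n\bigr)$ satisfies the classical analogue of the universal property, and this matches the derived version after truncation when both $Z$ and $X$ are already classical.

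The main obstacle is in the very first step: making precise the derived structure on the local model and matching it with the functor of Definition \ref{DivisorOverZDef}. The condition that $g^*\Nc^\vee_{Z/X} \to \Nc^\vee_{D/S}$ is only a $\pi_0$-surjection, rather than an equivalence, means that constructing the map $S \to \bl_Z(X)$ requires care in lifting the classical surjection data to the correct derived factorization, and one must verify that different local lifts agree up to contractible choice. The cleanest route is likely through a derived Rees-algebra description of $\bl_Z(X)$ as a projectivization of a graded quasi-coherent algebra on $X$ built from the derived powers of the ideal defining $Z$, so that the condition of being a virtual Cartier divisor is visibly encoded in the grading.
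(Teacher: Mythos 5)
The paper does not prove this theorem from scratch: parts 1, 3, 4, 5 and 7 are quoted directly from Khan--Rydh (Theorem 4.1.5 of \cite{khan-rydh}), part 6 is quoted from \cite{annala-cob} Lemma 4.5, and the only original content is the upgrade of Khan--Rydh's \emph{properness} of $\pi$ to \emph{projectivity}, which the paper gets by observing that $\Oc(-\Ec)$ is $\pi$-ample. Your proposal is therefore an attempt to reprove the main theorem of the cited reference. To your credit, your local model is the right one --- the chart description you give is exactly the derived pullback $X \times^R_{\Ab^r} \bl_0(\Ab^r)$ along the map $X \to \Ab^r$ classifying $(a_1,\dots,a_r)$, which is how Khan--Rydh actually construct $\bl_Z(X)$ locally --- so the broad strategy is sound and parallel to the source.

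The gap is the one you yourself flag, and it is not a technicality: producing the $S$-point of the chart model from a virtual Cartier divisor $(D \hook S) \to (Z \hook X)$, given only a $\pi_0$-surjection $g^*\Nc^\vee_{Z/X} \to \Nc^\vee_{D/S}$ rather than coherent derived lifting data, and showing the resulting space of such points is the \emph{maximal sub-$\infty$-groupoid} of Definition \ref{DivisorOverZDef} (so that representability and the contractibility of choices in (1) hold), is the technical heart of Khan--Rydh's paper; it occupies their deformation-theoretic analysis via the derived Rees algebra and deformation to the normal bundle, and your sketch does not supply it. Two further points would fail as written. First, projectivity of $\pi$ is not ``automatic'' from the local model: $Z$ is only \emph{locally} cut out by $r$ functions, so $\bl_Z(X)$ is only locally a closed subscheme of $X \times \Pb^{r-1}$; globally one must exhibit a relatively ample line bundle, and this is precisely the point the paper adds to Khan--Rydh via $\Oc(-\Ec)$. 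Second, for (7) it is not enough that the classical $\mathrm{Proj}$ of the Rees algebra ``matches after truncation'': one must show the classical blow-up of a regular immersion genuinely represents the \emph{derived} functor, i.e.\ that its exceptional divisor is a virtual Cartier divisor lying over $Z$ inducing an equivalence of mapping spaces, which again rests on the unproved representability analysis. If you intend to cite \cite{khan-rydh} for these points, say so; if you intend to reprove them, the Rees-algebra route you mention at the end is where the actual work lives.
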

\begin{proof}
The statements 1, 3, 4, 5 and 7 are directly from \cite{khan-rydh} Theorem 4.1.5. The second claim is otherwise from \emph{loc. cit.}, but the authors only prove that $\pi$ is proper; projectivity of $\pi$ follows from the fact that the line bundle $\Oc(-\Ec)$ is $\pi$-ample. For a proof of 6, see for example \cite{annala-cob} Lemma 4.5.
\end{proof}

Another result we are going to need is the following:

\begin{prop}\label{BlowUpVsTruncationProp}
Let $Z \hook X$ be a derived regular embedding. Then there exists a natural closed embedding $\bl^\cl_{Z_\cl}(X_\cl) \hook \bl_{Z}(X)$, and a derived Cartesian square
$$
\begin{tikzcd}
\Ec_\cl \arrow[hook]{d} \arrow[hook]{r} & \bl^\cl_{Z_\cl}(X_\cl) \arrow[hook]{d} \\
\Ec \arrow[hook]{r} & \bl_Z(X),
\end{tikzcd}
$$
where $\Ec_\cl$ is the classical exceptional divisor.
\end{prop}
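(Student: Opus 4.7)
The strategy is to construct the map $\phi \colon \bl^\cl_{Z_\cl}(X_\cl) \to \bl_Z(X)$ via the functor-of-points definition of the derived blow up (Definition \ref{DerivedBlowUpDef}), and then verify that $\phi$ is a closed embedding and that the exceptional square is derived Cartesian by passing to explicit local charts. Concretely, I would exhibit the classical exceptional divisor $\Ec_\cl \hookrightarrow \bl^\cl_{Z_\cl}(X_\cl)$, together with the map $g \colon \Ec_\cl \to Z_\cl \hookrightarrow Z$, as a virtual Cartier divisor on $\bl^\cl_{Z_\cl}(X_\cl)$ lying over $Z \hookrightarrow X$. Condition (1) of Definition \ref{DivisorOverZDef} is automatic because $\Ec_\cl$ is an effective Cartier divisor in the classical scheme $\bl^\cl_{Z_\cl}(X_\cl)$, and condition (2) is the defining Cartesian square of the classical exceptional divisor. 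For condition (3), work locally where $X = \Spec(A)$ and $Z = X \modmod (a_1,\dots,a_r)$: in the standard chart of $\bl^\cl_{Z_\cl}(X_\cl)$ where the transformed ideal is principal, generated by $\bar a_1$, the line bundle $\Nc^\vee_{\Ec_\cl/\bl^\cl_{Z_\cl}(X_\cl)}$ is generated by the image of $\bar a_1$, and the basis vector of the free module $g^* \Nc^\vee_{Z/X}$ corresponding to $a_1$ hits this generator, so the map is surjective on $\pi_0$. The universal property then produces $\phi$ over $X$.

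Next I would show that $\phi$ is a closed embedding by comparing $\bl^\cl_{Z_\cl}(X_\cl)$ with the truncation $\bl_Z(X)_\cl$. Since $\bl^\cl_{Z_\cl}(X_\cl)$ is classical, $\phi$ factors through $\bl_Z(X)_\cl$, so it suffices to check that the induced map of classical schemes is a closed embedding. In a standard affine chart of $\bl_Z(X)$ given by $\Spec(B)$ with $B = A[t_2,\dots,t_r] \modmod (a_i - t_i a_1)_{i\geq 2}$, the truncation is $B_\cl = \pi_0(A)[\bar t_2,\dots,\bar t_r]/(\bar a_i - \bar t_i \bar a_1)$, while the corresponding chart of the classical blow up is the quotient of $B_\cl$ by the $\bar a_1$-torsion (the saturation needed to turn the scheme-theoretic preimage of $Z_\cl$ into an honest Cartier divisor). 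This realizes $\bl^\cl_{Z_\cl}(X_\cl)$ as a closed subscheme of $\bl_Z(X)_\cl$, and a direct inspection of the universal property on this chart shows it coincides with $\phi$.

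Finally, for the derived Cartesian property, I would compute $\Ec \times^R_{\bl_Z(X)} \bl^\cl_{Z_\cl}(X_\cl)$ locally. Since $\Ec = \Spec(B \modmod a_1)$ in the chart above, the derived pullback corresponds to the derived tensor product $\bl^\cl_{Z_\cl}(X_\cl) \modmod \bar a_1$. Because $\bar a_1$ is a non-zero-divisor in $\bl^\cl_{Z_\cl}(X_\cl)$ (this is precisely the Cartier condition cutting out $\Ec_\cl$), the derived quotient collapses onto the honest quotient $\bl^\cl_{Z_\cl}(X_\cl)/(\bar a_1) = \Ec_\cl$, as required. The main obstacle throughout is correctly relating $\bl^\cl_{Z_\cl}(X_\cl)$ to the naive truncation $\bl_Z(X)_\cl$: the former is the $\bar a_1$-saturation of the latter, and it is exactly this saturation step that simultaneously forces $\phi$ to be a proper closed embedding (rather than an equivalence) and ensures the derived pullback cuts out $\Ec_\cl$ on the nose, with no higher homotopy contributions.
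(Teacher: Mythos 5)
Your argument is correct, but it takes a genuinely different route from the paper, which contains no internal proof at all: it simply points to the closed immersion $i_{Z/X}$ constructed in \cite{annala-pre-and-cob}, Appendix B.4 (Lemma B.15 and Theorem B.16), where the comparison map is set up once and for all via the Rees-algebra formalism. Your self-contained route --- classify $\phi$ by exhibiting $\Ec_\cl$ as a virtual Cartier divisor lying over $Z$, then check everything on standard charts --- works, with two comments. First, once $\phi$ is produced by the universal property of Definition \ref{DerivedBlowUpDef}, the derived Cartesian square is automatic: the classifying correspondence is implemented by derived pullback of the universal exceptional divisor, so $\Ec \times^R_{\bl_Z(X)} \bl^\cl_{Z_\cl}(X_\cl) \simeq \Ec_\cl$ by construction, and your local computation (that $\bar a_1$ being a non-zero-divisor in the affine blowup algebra collapses the derived quotient onto the honest one) is a correct but strictly redundant verification. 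Second, the only step that genuinely needs the charts is the claim that $\phi$ is a closed embedding, and there you should justify the presentation $B = A[t_2,\dots,t_r]\modmod(a_i - t_i a_1)$ of the derived chart: write $Z$ locally as $0 \times^R_{\Ab^r} X$ and invoke stability of derived blow ups under derived base change (part 3 of Theorem \ref{BlowUpPropertiesThm}) applied to $\bl_0(\Ab^r)$. Given that, your identification of the classical chart as the quotient of $B_\cl$ by its $\bar a_1$-power torsion is the standard description of the affine blowup algebra, and the resulting chart-wise surjections glue --- their sources are exactly the preimages of the derived charts, since both are the preimages of the standard opens of $\Pb^{r-1}$ --- to a closed embedding classifying $\Ec_\cl$, hence homotopic to $\phi$. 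What the citation buys the paper is uniformity with the constructions of \cite{annala-pre-and-cob}; what your argument buys is an elementary, chart-level explanation of why the square is derived Cartesian, namely that the saturation defining the classical blow up is precisely what makes $\bar a_1$ regular.
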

\begin{proof}
Indeed, the closed embedding $i_{Z/X}$ from \cite{annala-pre-and-cob} Appendix B.4 has this property, see Lemma B.15 and Theorem B.16.
\end{proof}

\subsection{Algebraic cobordism}

In this section, we will recall algebraic cobordism for finite dimensional Noetherian derived schemes having an ample line bundle, since we will not need it in greater generality. Throughout this section we will denote by $\Cc$ the $\infty$-category of finite dimensional Noetherian derived schemes admitting an ample line bundle. 

\subsubsection{Definition of algebraic (co)bordism}

Let us start with the definition of universal precobordism.

\begin{defn}\label{AlgCobDefn}
Let $X \to Y$ be a morphism in $\Cc$. Then the universal precobordism group $\PCob^\bullet(X \to Y)$ is the group completion of the Abelian monoid on cycles
$$[V \xrightarrow{f} X],$$
where $f$ is projective, $\Lb_{V/Y}$ has Tor-dimension $\leq 1$, and where the monoid operation is given by taking disjoint union. These cycles are subjected to the \emph{derived double point relations}: given a projective morphism $W \to \Pb^1 \times X$ so that $\Lb_{W/\Pb^1 \times Y}$ has Tor-dimension $\leq 1$, and virtual Cartier divisors $D_1$ and $D_2$ on $W$ so that their sum is the fibre $W_\infty$ of $W \to \Pb^1$ over $\infty$, we have that
$$[W_0 \to X] = [D_1 \to X] + [D_2 \to X] - [\Pb_{D_1 \times^R_W D_s}(\Oc(D_1) \oplus \Oc) \to X] \in \PCob^\bullet(X),$$
where $W_0$ is the fibre of $W \to \Pb^1$ over $0$. To see how the bivariant operations are defined, see for example \cite{annala-base-ind-cob} Definition 2.38.
\end{defn}

\begin{defn}[Bivariant fundamental classes]
Given a morphism $X \to Y$ in $\Cc$ so that $\Lb_{X/Y}$ has Tor-dimension $\leq 1$, we have the \emph{bivariant fundamental class}
$$1_{X/Y} := [X \to X] \in \PCob^\bullet(X \to Y).$$
These classes give rise to a stable orientation on the bivariant theory $\PCob^\bullet$. If $X \to X$ is the identity morphism, we will often use the short hand notation $1_X := 1_{X/X}$, and if either $X$ or $Y$ is a spectrum of a ring, we will often drop ``$\Spec$'' from the notation.
\end{defn}

It was shown in \cite{annala-base-ind-cob} Theorem 3.15 that there exists a formal group law 
$$F(x,y) = \sum_{i,j \geq 0} a_{ij} x^i y^j \in \PCob^\bullet\big(\Spec(\Zb)\big)[[x,y]]$$
so that for every $X \in \Cc$ and for all line bundles $\Ls_1$ and $\Ls_2$ on $X$, the equality
$$c_1(\Ls_1 \otimes \Ls_2) = F\big(c_1(\Ls_1), c_1(\Ls_2)\big) \in \PCob^\bullet(X)$$
holds, where $c_1(\Ls)$ is the \emph{first Chern class} (also called the \emph{Euler class}) of $\Ls$, i.e., the cycle represented by $[Z_s \hook X]$ where $s$ is any global section of $\Ls$ and $Z_s$ is the derived vanishing locus of $s$. 

\begin{defn}\label{VStackDefn}
Consider the stack $[\Ab^1 / \Gb_m] \times [\Ab^1 / \Gb_m]$, which classifies the data
$$(\Ls_1, s_1, \Ls_2, s_2),$$
where $\Ls_i$ are line bundles and $s_i$ are global sections of $\Ls_i$. Let 
$$(\Ls^u_1, s^u_1, \Ls^u_2, s^u_2)$$
be the universal such data classified by the identity morphism, and let us denote by $\Vc$, $\Vc_i$ and $\Vc_{12}$ the derived vanishing loci of $s^u_1 s^u_2$, $s^u_i$ and $(s^u_1, s^u_2)$ inside $[\Ab^1 / \Gb_m] \times [\Ab^1 / \Gb_m]$. Notice that there exists a commutative square
$$
\begin{tikzcd}
\Vc_{12} \arrow[hook]{r} \arrow[hook]{d} \arrow[hook]{rd}{\iota^{12}} & \Vc_2 \arrow[hook]{d}{\iota^2} \\
\Vc_1 \arrow[hook]{r}{\iota^1} & \Vc,
\end{tikzcd}
$$
and none of the maps $\iota^1, \iota^2$ and $\iota^{12}$ is a derived regular embedding. The embeddings $\Vc_{12} \hook \Vc_i$ are derived regular.
\end{defn}

We are now ready to recall the definition of algebraic cobordism.

\begin{defn}
Let $X \to Y$ be a morphism in $\Cc$. Then the \emph{bivariant algebraic cobordism} group $\Omega^\bullet(X \to Y)$ is obtained by imposing the following \emph{decomposition relation} on $\PCob^\bullet(X \to Y)$: given a projective morphism $f: V \to X$ so that $\Lb_{V/Y}$ has Tor-dimension $\leq 1$, and a morphism $V \to \Vc$, denote by $V_i$ and $V_{12}$ the derived pullbacks $V \times_\Vc \Vc_i$ and $V \times_\Vc \Vc_{12}$ and by $f^i$ and $f^{12}$ the induced projective morphisms $V_i \to X$ and $V_{12} \to X$ respectively. If $\Lb_{V_1/Y}$ and $\Lb_{V_2/Y}$ have Tor-dimension $\leq 1$, then 
$$[V \to X] = [V_1 \to X] + [V_2 \to X] + f^{12}_* \Bigg( \sum_{i,j \geq 1} a_{ij} c_1(\Ls_1)^{i-1} \bullet c_1(\Ls_2)^{j-1} \bullet 1_{V_{12} / Y} \Bigg)$$
in $\Omega^\bullet(X \to Y)$ where $\Ls_i$ are the pullbacks of the universal line bundles $\Ls^u_i$, and where $1_{V_{12}/Y}$ is the bivariant fundamental class
$$[V_{12} \to V_{12}] \in \PCob^\bullet(V_{12} \to Y).$$
\end{defn}

Let us then recall in more detail the homology and cohomology theories associated to $\Omega^\bullet$, starting with the homology theory.

\begin{defn}[Algebraic bordism]
Given $X \in \Cc$, we define its \emph{algebraic bordism} group as 
$$\Omega_\bullet(X) := \Omega^\bullet\big(X \to \Spec(\Zb) \big).$$
Let $f: X \to Y$ be a morphism in $\Cc$. If $f$ is projective, then there exists a \emph{pushforward morphism}
$$f_*: \Omega_\bullet(X) \to \Omega_\bullet(Y)$$
given by the formula
$$[V \xrightarrow{g} X] \mapsto [V \xrightarrow{f \circ g} Y],$$
and if $\Lb_f$ has Tor-dimension $\leq 1$, there exists \emph{Gysin pullback morphism}
$$f^!: \Omega_\bullet(Y) \to \Omega_\bullet(X)$$
given by the formula
$$[W \to Y] \mapsto [W \times^R_Y X \to X].$$
Pushforwards and Gysin pullbacks are functorial in the obvious sense.
\end{defn}

The following result is an immediate consequence of Proposition \ref{DCIAltCharProp} and the bivariant formalism.

\begin{prop}\label{BordismVsYBordismProp}
Let $Y \in \Cc$ be a regular scheme. Then for all $X \to Y$ in $\Cc$ the morphism
$$- \bullet 1_{Y / \Zb}: \Omega^Y_\bullet(X) := \Omega^\bullet(X \to Y) \to \Omega_\bullet(X)$$
is an isomorphism, and these maps commute with pushforwards and Gysin pullbacks. \qed
\end{prop}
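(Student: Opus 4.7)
The plan is to show that $- \bullet 1_{Y/\Zb}$ is literally the identity at the level of representing cycles, once the two cycle groups are identified.

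First I would invoke Proposition \ref{DCIAltCharProp}. Since $Y$ is regular it is in particular a derived complete intersection scheme, so for any morphism $V \to Y$ the relative cotangent complex $\Lb_{V/Y}$ has Tor-dimension $\leq 1$ if and only if $V$ is a derived complete intersection, if and only if $\Lb_{V/\Zb}$ has Tor-dimension $\leq 1$. Consequently the generating cycles $[V \xrightarrow{f} X]$ of $\PCob^\bullet(X \to Y)$ and of $\PCob^\bullet(X \to \Spec(\Zb))$ are in canonical bijection: in both cases they are exactly the projective morphisms $V \to X$ with $V$ a derived complete intersection.

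Next I would check that the defining relations also match. For the derived double point relation, a cycle $W \to \Pb^1 \times X$ has $\Lb_{W/\Pb^1 \times Y}$ of Tor-dimension $\leq 1$ precisely when $W$ is a derived complete intersection (since $\Pb^1 \times Y$ is again regular), so the conditions on the two sides coincide. For the decomposition relation, the side conditions that $\Lb_{V_i/Y}$ have Tor-dimension $\leq 1$ translate verbatim to the corresponding conditions on $\Lb_{V_i/\Zb}$ by the same application of Proposition \ref{DCIAltCharProp}. Hence one obtains an induced bijection $\Omega^\bullet(X \to Y) \to \Omega^\bullet(X \to \Spec(\Zb))$ sending $[V \to X]$ to $[V \to X]$.

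Finally I would identify this bijection with the map $-\bullet 1_{Y/\Zb}$. By the construction of the bivariant product in precobordism, the class $[V \to X] \bullet 1_{Y/\Zb} \in \PCob^\bullet(X \to \Spec(\Zb))$ is represented by the derived fibre product $V \times^R_Y Y \to X$, which is just $V \to X$. Thus the two maps agree on generators, so $-\bullet 1_{Y/\Zb}$ is the claimed isomorphism. Compatibility with projective pushforwards and Gysin pullbacks is then immediate from the bivariant axioms, since both operations commute with right multiplication by the fixed bivariant class $1_{Y/\Zb}$. The only mildly technical point in this plan is the translation of the decomposition relation, and this reduces directly to Proposition \ref{DCIAltCharProp}.
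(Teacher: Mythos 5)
Your proposal is correct and follows exactly the route the paper intends: the paper records this proposition as an immediate consequence of Proposition \ref{DCIAltCharProp} and the bivariant formalism, and your argument simply spells out that regularity of $Y$ (and of $\Spec(\Zb)$, $\Pb^1\times Y$, etc.) makes the generators and relations of the two cycle groups literally coincide, with $-\bullet 1_{Y/\Zb}$ realizing the identification. No issues.
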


Let us then recall the cohomology theory.

\begin{defn}[Algebraic cobordism]
Given $X \in \Cc$, we define its \emph{algebraic cobordism} ring as
$$\Omega^*(X) := \Omega^\bullet(X \to X),$$
where the ring structure is given by the formula
$$[V \to X] \bullet [W \to X] = [V \times^R_X W \to X] \in \Omega^*(X).$$
Notice that $\Omega^*(X)$ is a graded ring, and $[V \to X]$ is of degree $d$ if $V \to X$ is of relative virtual dimension $-d$. Suppose then that $f: X \to Y$ is a morphism in $\Cc$. Then there exists a \emph{pullback morphism}
$$f^*: \Omega^*(Y) \to \Omega^*(X)$$
given by the formula
$$[W \to Y] \mapsto [W \times^R_Y X \to X],$$
and if $f$ is projective and quasi-smooth, there exists a \emph{Gysin pushforward morphism}
$$f_!: \Omega^*(X) \to \Omega^*(Y)$$
given by the formula
$$[V \xrightarrow{g} X] \mapsto [V \xrightarrow{f \circ g} Y].$$
Note that $f^*$ is multiplicative and preserves the grading while $f_!$ does not have to do either. Both pullbacks and Gysin pushforwards are functorial in the obvious sense.
\end{defn}

These theories have the following formal properties.

\begin{prop}[Projection formula]
The group $\Omega_\bullet(X)$ is an $\Omega^*(X)$-module, with the action given by
$$[V \to X] \bullet [W \to X] = [V \times^R_X W \to X].$$
Moreover, if $f: X \to Y$ is a projective morphism, then for all $\alpha \in \Omega^*(Y)$ and all $\beta \in \Omega_\bullet(X)$, the equality
$$f_*\big(f^*(\alpha) \bullet \beta \big) = \alpha \bullet f_*(\beta) \in \Omega_\bullet(Y)$$
holds. If $f$ is projective and quasi-smooth, then for all $\alpha \in \Omega^*(Y)$ and all $\gamma \in \Omega^*(X)$, the equality
$$f_!\big(f^*(\alpha) \bullet \gamma \big) = \alpha \bullet f_!(\gamma) \in \Omega^*(Y)$$
holds.
\end{prop}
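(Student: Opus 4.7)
The plan is to deduce all three claims directly from the bivariant formalism constructed in \cite{annala-base-ind-cob}. Recall that this formalism equips the groups $\Omega^\bullet(X \to Y)$ with an associative, unital bivariant product $\bullet$, together with proper pushforwards and pullbacks along derived Cartesian squares, satisfying standard Fulton--MacPherson-type compatibility axioms and, in particular, the bivariant projection formula relating product and pushforward.

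First, for the $\Omega^*(X)$-module structure on $\Omega_\bullet(X)$, I specialize the product to the composition $X \to X \to \Spec(\Zb)$, obtaining a pairing
$$\Omega^*(X) \otimes \Omega_\bullet(X) = \Omega^\bullet(X \to X) \otimes \Omega^\bullet(X \to \Spec(\Zb)) \xrightarrow{\bullet} \Omega^\bullet(X \to \Spec(\Zb)) = \Omega_\bullet(X).$$
Associativity and unitality of $\bullet$ promote this to a module structure, and unwinding the formula for $\bullet$ on cycle representatives recovers $[V \to X] \bullet [W \to X] = [V \times^R_X W \to X]$.

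For the first projection formula, I work with representatives $\alpha = [W \to Y]$ and $\beta = [U \to X]$. Using the cycle-level descriptions of $f^*$, $f_*$, and $\bullet$, one computes
$$f^*\alpha \bullet \beta = \bigl[(W \times^R_Y X) \times^R_X U \to X\bigr] = [W \times^R_Y U \to X],$$
so pushing forward along $f$ yields $[W \times^R_Y U \to Y]$, which coincides with $\alpha \bullet f_*\beta$ as cycles. The second projection formula is proved identically, replacing $\Spec(\Zb)$ by $Y$; one only needs to note that quasi-smoothness of $f$ together with $\Lb_{V/X}$ of Tor-dimension $\leq 1$ forces $\Lb_{V/Y}$ of Tor-dimension $\leq 1$, so that every cycle $\gamma = [V \to X] \in \Omega^*(X)$ admits a lift to $\Omega^\bullet(X \to Y)$ via multiplication by $1_{X/Y}$, and $f_!\gamma = f_*(\gamma \bullet 1_{X/Y})$ is well-defined.

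The only genuine point to check is that these tautological cycle-level identities descend to the quotients by the derived double point and decomposition relations defining $\Omega^\bullet$. This, however, is exactly the kind of well-definedness statement that is part of the construction of the bivariant theory: the relevant compatibilities of $\bullet$, $f^*$, $f_*$, and $f_!$ with these relations are established in \cite{annala-base-ind-cob}, and once they are in place the above arguments apply verbatim. No genuine obstacle remains beyond careful bookkeeping of bivariant degrees.
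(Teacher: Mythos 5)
Your proposal is correct and matches the paper's approach: the paper's proof is simply the one-line remark that the statement is an immediate consequence of the bivariant formalism and can also be checked on the level of cycles, which is exactly what you carry out in more detail. The cycle-level computation $f^*\alpha \bullet \beta = [W \times^R_Y U \to X]$ and its pushforward are the intended verification.
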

\begin{proof}
This is an immediate consequence of bivariant formalism, but can also be easily checked on the level of cycles.
\end{proof}

\begin{prop}[Push-pull formula]
Suppose that
$$
\begin{tikzcd}
X' \arrow[]{r}{f'} \arrow[]{d}{g'} & Y' \arrow[]{d}{g} \\
X \arrow[]{r}{f} & Y  
\end{tikzcd}
$$
is a derived Cartesian square in $\Cc$. Then
\begin{enumerate}
\item if $\Lb_{f}$ has Tor-dimension $\leq 1$ and $g$ is projective,
$$f^! \circ g_* = g'_* \circ f'^! : \Omega_\bullet(Y') \to \Omega_\bullet(X);$$

\item if $g$ is projective and quasi-smooth, 
$$f^* \circ g_! = g'_! \circ f'^* : \Omega^*(Y') \to \Omega^*(X).$$
\end{enumerate}
\end{prop}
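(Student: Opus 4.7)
The plan is to deduce the statement directly from the bivariant formalism of $\Omega^\bullet$ and, in parallel, to verify it by hand on cycles. Recall that in the bivariant picture $f^!$ is bivariant multiplication by the fundamental class $1_{X/Y} \in \Omega^\bullet(X \to Y)$ (well-defined since $\Lb_f$ has Tor-dimension $\leq 1$), $g_*$ is the bivariant pushforward along the projective morphism $g$, and analogously $g_!$ is the bivariant pushforward along the projective quasi-smooth $g$ while $f^*$ is the bivariant pullback. The two equalities in the statement are then instances of the standard A123-type compatibility between bivariant pushforwards, pullbacks and the bivariant product across a derived Cartesian square, which holds for the universal precobordism $\PCob^\bullet$ by construction in \cite{annala-base-ind-cob} and descends to $\Omega^\bullet$ because the decomposition relation is stable under derived base change and postcomposition with projective morphisms.

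A cycle-level verification is equally direct. For part (1), take $[W \to Y'] \in \Omega_\bullet(Y')$ and compute
$$f^!\circ g_*[W \to Y'] = [W \times^R_Y X \to X], \qquad g'_* \circ f'^![W \to Y'] = [W \times^R_{Y'} X' \to X].$$
The derived Cartesian hypothesis gives an equivalence $X' \simeq X \times^R_Y Y'$, so associativity of the derived fibre product supplies a canonical equivalence $W \times^R_{Y'} X' \simeq W \times^R_Y X$ over $X$, matching the two representatives. Part (2) is computed verbatim the same way: both $f^* g_! [W \to Y']$ and $g'_! f'^*[W\to Y']$ are represented by the cycle $[W \times^R_Y X \to X]$ with its natural map to $X$.

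The only subtlety, if one pursues the cycle-level route, is well-definedness on the quotient $\Omega^\bullet$, i.e., the compatibility of these cycle formulas with the derived double point and decomposition relations. This compatibility is already built into the bivariant formalism, but can also be verified by hand using that derived blow-ups, virtual Cartier divisors, and the classifying stacks $\Vc$, $\Vc_i$, $\Vc_{12}$ of Definition \ref{VStackDefn} are all stable under derived base change (Theorem \ref{BlowUpPropertiesThm}(3) handles the blow-up case). I expect this routine base-change bookkeeping to be the only place where one has to do any work; the actual formula is then simply the associativity of derived fibre products applied to the Cartesian square.
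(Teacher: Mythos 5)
Your proposal is correct and matches the paper's own (one-line) proof, which likewise invokes the bivariant formalism and notes that the identity can be checked directly on cycles. The cycle-level computation via associativity of the derived fibre product and the remark about compatibility with the defining relations are exactly the content the paper leaves implicit.
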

\begin{proof}
This is an immediate consequence of bivariant formalism, but can also be easily checked on the level of cycles.
\end{proof}

\begin{prop}[Naturality of the duality map]
For every $X \in \Cc$ derived complete intersection, there exists a natural duality morphism
$$- \bullet 1_{X/\Zb} : \Omega^*(X) \to \Omega_\bullet(X).$$
Moreover, 
\begin{enumerate}
\item if $f: X \to Y$ is a projective quasi-smooth morphism, then the square
$$
\begin{tikzcd}
\Omega^*(X) \arrow[]{d}{f_!} \arrow[]{r}{- \bullet 1_{X/\Zb}} & \Omega_\bullet(X) \arrow[]{d}{f_*} \\
\Omega^*(Y) \arrow[]{r}{- \bullet 1_{Y / \Zb}} & \Omega_\bullet(Y)
\end{tikzcd}
$$
commutes;
\item if $f: X \to Y$ is such that $\Lb_{X/Y}$ has Tor-dimension $\leq 1$, then the square
$$
\begin{tikzcd}
\Omega^*(Y) \arrow[]{r}{- \bullet 1_{Y / \Zb}} \arrow[]{d}{f^*} & \Omega_\bullet(Y) \arrow[]{d}{f^!} \\
\Omega^*(X) \arrow[]{r}{- \bullet 1_{X / \Zb}} & \Omega_\bullet(X)
\end{tikzcd}
$$
commutes.
\end{enumerate}
\end{prop}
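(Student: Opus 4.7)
My strategy is to construct the duality morphism as bivariant multiplication with a fundamental class, and then deduce both commutativities from the bivariant formalism, with cycle-level checks as backup.

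Since $X$ is a derived complete intersection, condition (2) of Proposition \ref{DCIAltCharProp} gives that $\Lb_{X/\Zb}$ has Tor-dimension $\leq 1$, so the fundamental class
$$1_{X/\Zb} = [X \xrightarrow{\mathrm{id}} X] \in \Omega^\bullet(X \to \Spec \Zb) = \Omega_\bullet(X)$$
is defined. The duality morphism is then $\alpha \mapsto \alpha \bullet 1_{X/\Zb}$, using the bivariant product $\Omega^\bullet(X \to X) \otimes \Omega^\bullet(X \to \Spec \Zb) \to \Omega^\bullet(X \to \Spec \Zb)$. On a generator $\alpha = [V \xrightarrow{g} X]$ this simply reinterprets the same cycle as a bordism class, which is valid since $V$ is then also a derived complete intersection.

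For statement (1), with $f \colon X \to Y$ projective and quasi-smooth, I would exploit the multiplicativity of fundamental classes $1_{X/\Zb} = 1_{X/Y} \bullet 1_{Y/\Zb}$ together with the bivariant description $f_!(\alpha) = f_*(\alpha \bullet 1_{X/Y})$ of the Gysin pushforward. The projection formula (right $\Omega^\bullet$-linearity of $f_*$) then gives
$$f_*(\alpha \bullet 1_{X/\Zb}) = f_*\big((\alpha \bullet 1_{X/Y}) \bullet 1_{Y/\Zb}\big) = f_*(\alpha \bullet 1_{X/Y}) \bullet 1_{Y/\Zb} = f_!(\alpha) \bullet 1_{Y/\Zb},$$
which is the desired commutativity. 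As a sanity check on a generator $\alpha = [V \xrightarrow{g} X]$, both sides visibly evaluate to $[V \xrightarrow{f \circ g} Y] \in \Omega_\bullet(Y)$.

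For statement (2), with $\Lb_{X/Y}$ of Tor-dimension $\leq 1$, I would verify the equality on generators $\beta = [W \to Y]$: both $f^!(\beta \bullet 1_{Y/\Zb})$ and $f^*(\beta) \bullet 1_{X/\Zb}$ reduce to $[W \times^R_Y X \to X] \in \Omega_\bullet(X)$, since the Gysin pullback on $\Omega_\bullet$ and the contravariant pullback on $\Omega^*$ are computed by the same derived base change, while right multiplication by a fundamental class simply reinterprets the cycle in the appropriate group. Abstractly this is the associativity of the bivariant product combined with the fact that $f^!$ acts as left multiplication by $1_{X/Y}$. The only routine point to confirm is that the defining relations of $\Omega^\bullet$ (double point and decomposition) are preserved by these operations, but this compatibility is already built into the construction of the theory in \cite{annala-base-ind-cob}, so I do not expect any real obstacle.
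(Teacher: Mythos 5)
Your proposal is correct and matches the paper's approach: the paper proves this proposition by simply noting it is an immediate consequence of the bivariant formalism and can be checked on the level of cycles, which is exactly the two-pronged argument (stability of orientations plus projection formula, backed by cycle-level verification) that you spell out.
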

\begin{proof}
This is an immediate consequence of bivariant formalism, but can also be easily checked on the level of cycles.
\end{proof}

\subsubsection{Basic properties}

Let us then recall the basic properties of algebraic cobordism. Note that the formal group law acting on the Chern classes of $\Omega^*$ induces a map 
$$\Lb^* \to \Omega^*(X)$$
for all $X \in \Cc$, which is compatible with pullbacks. We will have to use some of the basic properties of this morphism later in the article, which we collect below.

\begin{prop}\label{LSmoothProp}
Let $X \in \Cc$. Then the image of $\Lb^* \to \Omega^*(X)$ is generated by derived schemes smooth over $X$.
\end{prop}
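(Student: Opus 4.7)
The plan is to reduce to the absolute case $X = \Spec(\Zb)$ and then to run the classical Quillen--Lazard computation that expresses the formal group law coefficients $a_{ij}$ in terms of Milnor hypersurfaces, adapted to the bivariant setup recalled in the excerpt.

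First, the map $\Lb^* \to \Omega^*(X)$ factors as $\Lb^* \to \Omega^*(\Spec(\Zb)) \xrightarrow{s^*} \Omega^*(X)$, where $s: X \to \Spec(\Zb)$ is the structure morphism; this is because the formal group law on $\Omega^\bullet$ over $X$ is obtained by pulling back the universal one from $\Spec(\Zb)$. Since $s^*$ sends $[V \to \Spec(\Zb)]$ to $[V \times^R_\Zb X \to X]$ and smoothness is preserved under derived base change, it suffices to show that the image of $\Lb^* \to \Omega^*(\Spec(\Zb))$ lies in the subring generated by classes of smooth projective $\Zb$-schemes. Since $\Lb^*$ is generated as a $\Zb$-algebra by the coefficients $a_{ij}$ of the universal formal group law, and since derived fibre products of smooth $\Zb$-schemes are smooth, it is enough to show that each $a_{ij} \in \Omega^*(\Spec(\Zb))$ is a $\Zb$-polynomial in classes of smooth projective $\Zb$-schemes.

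To produce such an expression, I would use Milnor hypersurfaces. Assume $i \leq j$ and let $H_{ij} \subset \Pb^i_\Zb \times_\Zb \Pb^j_\Zb$ denote the zero locus of the section $\sum_{k=0}^{i} x_k y_k$ of $\Oc(1,1)$; a direct Jacobian computation shows that $H_{ij}$ is smooth and projective over $\Zb$. Writing $\alpha = c_1(\Oc(1,0))$, $\beta = c_1(\Oc(0,1))$, and $\pi: \Pb^i_\Zb \times_\Zb \Pb^j_\Zb \to \Spec(\Zb)$, the hypersurface represents the Euler class, so that $[H_{ij} \hookrightarrow \Pb^i \times \Pb^j] = c_1(\Oc(1,1)) = F(\alpha,\beta)$. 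Pushing forward along $\pi$ using the projective bundle formula and push-pull, together with the standard identity $\pi_{1!}(\alpha^k) = [\Pb^{i-k}]$ on $\Pb^i$ (with the convention $[\Pb^{-1}] := 0$), I obtain
\[
[H_{ij} \to \Spec(\Zb)] \;=\; \sum_{k,l} a_{kl}\, [\Pb^{i-k}]\,[\Pb^{j-l}] \;\in\; \Omega^*(\Spec(\Zb)),
\]
under the conventions $a_{00} = 0$ and $a_{10} = a_{01} = 1$.

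The leading $(k,l) = (i,j)$ term contributes exactly $a_{ij}$, while every other term involves an $a_{kl}$ with $k + l < i + j$. An induction on $i + j$, with trivial base case $i + j \leq 2$, then exhibits $a_{ij}$ as a $\Zb$-polynomial in the smooth projective classes $[\Pb^n]$ and $[H_{kl}]$; pulling back along $s^*$ realises the image of each $a_{ij}$ in $\Omega^*(X)$ as a polynomial in classes of smooth derived $X$-schemes, as desired. The most delicate step to verify carefully is the pushforward identity together with the reductions coming from higher powers of $\alpha$ and $\beta$ via the projective bundle formula, but these reductions only produce additional smooth-class contributions and so do not disturb the induction. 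Beyond this bookkeeping, I do not expect any serious obstacle, since the argument is the classical Milnor--Quillen trick transferred into the bivariant framework of the paper.
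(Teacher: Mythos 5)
Your proposal is correct and is essentially the paper's own argument: the paper simply cites the classical Levine--Morel/Quillen computation showing the image of $\Lb^*$ is generated by towers of projective bundles over $X$, and your Milnor-hypersurface induction (with $H_{ij}$ a $\Pb^{j-1}$-bundle over $\Pb^i$, hence smooth, and the pushforward identity $\pi_!(\alpha^k\beta^l)=[\Pb^{i-k}][\Pb^{j-l}]$) is exactly that argument written out in the bivariant setting. The reduction to $\Spec(\Zb)$ via compatibility of the formal group law with pullback is also consistent with how the paper sets things up.
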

\begin{proof}
By the same argument as in \cite{Levine:2007}, one shows that the image of $\Lb^* \to \Omega^*(X)$ is generated by towers of projective bundles over $X$. 
\end{proof}

\begin{prop}\label{LInjectivityProp}
Let $X \in \Cc$. Then the natural map
$$\Lb^* \to \Omega^*(X)$$
is an injection.
\end{prop}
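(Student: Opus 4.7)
My plan is to use naturality of the Lazard-to-cobordism map under pullbacks to reduce the problem to the case of an algebraically closed field, and then to invoke Levine--Morel in characteristic zero and a Chern-number style argument in positive characteristic.

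For the reduction, I observe that $\Lb^* \to \Omega^*(X)$ is built from the formal group law acting on first Chern classes of line bundles, both of which are natural under pullbacks. Hence for any morphism $f \colon X' \to X$ in $\Cc$ the triangle formed by the two Lazard maps and $f^*$ commutes. Assuming $X$ is non-empty, I choose any point $\iota \colon \Spec(k) \to X$ with residue field $k$: the composite $\Lb^* \to \Omega^*(X) \xrightarrow{\iota^*} \Omega^*(\Spec(k))$ coincides with the canonical Lazard map, so it suffices to prove $\Lb^* \hookrightarrow \Omega^*(\Spec(k))$. Replacing $k$ by its algebraic closure $\bar k$ via a further pullback, I may assume $k = \bar k$ is algebraically closed.

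For $\bar k$ of characteristic zero, Levine--Morel's theorem (item~(1) of the list of properties of $\Omega'^\bullet$ following Conjecture~\ref{CobordismOfLocalRingsConj}, combined with Proposition~\ref{BordismVsYBordismProp}) identifies $\Lb^* \cong \Omega^*(\Spec(\bar k))$, so injectivity is immediate. The main obstacle is the case of positive characteristic $p$, where Levine--Morel is unavailable. My plan here is to use a Chern-number / Landweber--Novikov argument: by Proposition~\ref{LSmoothProp}, the image of $\Lb^*$ in $\Omega^*(\Spec(\bar k))$ is spanned by classes of smooth projective $\bar k$-varieties, whose $\Zb$-valued Chern numbers are insensitive to the characteristic and, by Milnor--Quillen, suffice to separate Lazard generators. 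As a backup, if the Chern-number formalism does not transfer cleanly to this bivariant setting, one can lift via the Witt vectors $W(\bar k)$, obtaining injectivity of $\Lb^* \hookrightarrow \Omega^*(\Spec W(\bar k))$ from the characteristic-zero generic fibre; transferring this injectivity along the closed-fibre specialization is then the subtle final step, as specialisation can a priori introduce new relations among Chern classes.
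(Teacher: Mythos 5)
Your primary route is the paper's route, so let me say where it lands and where it wobbles. The reduction to a field by pulling back along a point is exactly what the paper does (it uses the generic point of a component; any point works since pullback in $\Omega^*$ exists for arbitrary morphisms in $\Cc$). The further reduction to $\bar k$ and the split into characteristic $0$ versus $p$ are unnecessary: the argument the paper runs is uniform in the characteristic. Concretely, the worry you flag -- ``if the Chern-number formalism does not transfer cleanly to this bivariant setting'' -- is resolved affirmatively and is the entire content of the proof: following Levine--Morel (Section 4.1.9), whose construction depends only on the formal properties of Chern classes (all available here), one builds the total Landweber--Novikov operator $\Omega^* \to \Omega^*[b_1,b_2,\dots]^{(t)}$, composes with $\deg\colon \Omega^0(\Spec(k)) \to K^0(\Spec(k)) \cong \Zb$ to get a ring map $\Omega^*(\Spec(k)) \to \Zb[b_1,b_2,\dots]$, and then invokes Quillen's computation that the composite $\Lb^* \to \Zb[b_1,b_2,\dots]$ is injective. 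Note the logical point your phrasing obscures: saying that the image of $\Lb^*$ is spanned by smooth projective varieties ``whose Chern numbers separate Lazard generators'' does not by itself give injectivity -- a nonzero Lazard class with nonzero Chern numbers could still die in $\Omega^*$ unless the Chern numbers are shown to factor through $\Omega^*(\Spec(k))$. That factorization is precisely what the Landweber--Novikov-plus-degree construction supplies, so it is the step you must actually carry out rather than hedge on. The Witt-vector backup should be discarded: besides being unneeded, it founders exactly where you predict, since there is no specialization argument available that would transfer injectivity from the generic fibre to the closed fibre without new input.
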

\begin{proof}
Since the morphism is compatible with pullbacks, we can pull back to the generic point of an irreducible component of $X$ to reduce to the case where $X \simeq \Spec(k)$ is the spectrum of a field $k$. Moreover, following the arguments of \cite{Levine:2007} Section 4.1.9, which are based on the ideas of Quillen in \cite{quillen1971} and depend only on the formal properties of Chern classes, we can construct the \emph{total Landweber--Novikov operator}
$$\Omega^* \to \Omega^*[b_1,b_2,...]^{(t)},$$
and combining this with the degree morphism
$$\deg: \Omega^0\big(\Spec(k)\big) \to K^0\big(\Spec(k)\big) \cong \Zb,$$
we obtain a natural morphism
$$\Omega^*\big(\Spec(k)\big) \to \Zb[b_1,b_2,...].$$
As in \cite{Levine:2007} Lemma 4.3.1, one proves that the composition $\Lb^* \to \Zb[b_1,b_2,...]$ is an injection, proving the claim.
\end{proof}

Next we have to recall that the decomposition relations used in the construction of $\Omega^\bullet$ imply the so called snc relations of Lowrey and Schürg, which follow as a special case of the following result.

\begin{prop}[\cite{annala-base-ind-cob} Lemma 3.5]\label{SNCRelationsProp}
Let $X \in \Cc$ be a derived complete intersection scheme, and let 
$$D \simeq n_1 D_1 + \cdots + n_r D_r$$
be a virtual Cartier divisors on $X$ with $n_i > 0$. Let us denote for every $I \subset [r]$ by $\iota^I$ the canonical inclusion of the derived intersection
$$D_I := \bigcap_{i \in I} D_i \hook D.$$
Then
$$1_{D / \Zb} = \sum_{I \subset [r]} \iota^I_*\Bigg( F^{n_1,...,n_r}_I\big(c_1(\Oc(D_1)), ..., c_1(\Oc(D_r))\big) \bullet 1_{D_I / \Zb} \Bigg) \in \Omega_\bullet(D)$$
for universal homogeneous power series
$$F^{n_1,...,n_r}_I(x_1,...,x_r) \in \Lb^*[[x_1,...,x_r]]$$
of degree $1 - \abs{I}$. \qed
\end{prop}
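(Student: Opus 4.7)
The strategy is to prove the identity by induction on the total multiplicity $N = n_1 + \cdots + n_r$. The base case $N = 1$ forces $r = 1$ and $n_1 = 1$, so $D \simeq D_1$, and the formula holds trivially with $F_{\{1\}} \equiv 1$ and $F_\emptyset \equiv 0$.

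For the inductive step, after reordering I would assume $n_1 \geq 1$ and write $D \simeq D_1 + E$, where $E := (n_1 - 1) D_1 + n_2 D_2 + \cdots + n_r D_r$ has total multiplicity $N - 1$. The defining section of $\Oc(D)$ then factors as a product of the defining sections of $\Oc(D_1)|_D$ and $\Oc(E)|_D$, producing a map from $V := D$ to the stack $\Vc$ of Definition \ref{VStackDefn}. Applying the decomposition relation to the identity morphism $V \to D$ (with target $\Spec(\Zb)$) yields
\[
1_{D/\Zb} = [V_1 \to D] + [V_2 \to D] + \iota^{12}_*\!\Bigg( \sum_{i,j \geq 1} a_{ij}\, c_1(\Oc(D_1))^{i-1}\, c_1(\Oc(E))^{j-1} \bullet 1_{V_{12}/\Zb} \Bigg),
\]
where $V_1 = D \times_X^R D_1$, $V_2 = D \times_X^R E$, and $V_{12}$ is the triple derived intersection inside $D$.

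Each of $V_1, V_2, V_{12}$ is an iterated derived regular embedding in $D$ and is therefore a derived complete intersection. By unwinding their construction, I would interpret each as a virtual SNC configuration of strictly smaller total multiplicity supported on the strata $D_I$, so that the inductive hypothesis decomposes $1_{V_1/\Zb}$, $1_{V_2/\Zb}$ and $1_{V_{12}/\Zb}$ along these strata. In parallel, the formal group law of the Chern classes in $\PCob^\bullet$ rewrites $c_1(\Oc(E))$ as a homogeneous power series in $c_1(\Oc(D_1)), \ldots, c_1(\Oc(D_r))$ with coefficients in $\Lb^*$. Collecting all contributions, regrouping by stratum, and repackaging as $\iota^I_*\bigl(F_I \bullet 1_{D_I/\Zb}\bigr)$ produces the claimed formula; the universality of the $a_{ij}$ and of the formal group law operations guarantees that the $F^{n_1,\ldots,n_r}_I$ lie in $\Lb^*[[x_1,\ldots,x_r]]$, while a direct degree count (each $c_1$ contributes $+1$, each $a_{ij}$ contributes $1 - i - j$, and $1_{V_{12}/\Zb}$ sits in codimension $2$ in $D$) confirms the homogeneity $1 - |I|$.

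The main obstacle is the inductive identification of the derived fibre products $V_1, V_2, V_{12}$ with virtual SNC divisors of strictly smaller total multiplicity on which the hypothesis applies: since $D$ and $D_1$ share the component $D_1$ whenever $n_1 \geq 1$, the intersection $D \times_X^R D_1$ is highly non-transverse and typically carries non-trivial higher homotopy sheaves, so the bookkeeping of strata and degrees must be done with care. Alternatively, one may pull back from a classifying universal situation, namely a stack parametrising tuples $(\Ls_1, s_1, \ldots, \Ls_r, s_r)$ of line bundles with sections whose product has the prescribed multiplicity pattern, compute the $F^{n_1,\ldots,n_r}_I$ as universal Lazard power series once and for all in that ambient, and then obtain the general identity by functoriality in a single pullback step.
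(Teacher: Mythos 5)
First, note that the paper itself does not prove this proposition: it is imported verbatim from the reference cited in its header (hence the \qed placed immediately after the statement), so there is no in-paper argument to compare against. Your overall strategy --- induct on the total multiplicity $N = n_1 + \cdots + n_r$, factor the defining section of $\Oc(D)$ as $s_1 \cdot s_E$ with $E := (n_1-1)D_1 + n_2D_2 + \cdots + n_rD_r$, and feed this factorization into the decomposition relation --- is indeed how snc relations of this kind are extracted from the decomposition relations, and is the right approach.

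However, there is a concrete error in your identification of the loci $V_1$, $V_2$, $V_{12}$. The decomposition relation takes derived pullbacks over $\Vc$, not over $[\Ab^1/\Gb_m] \times [\Ab^1/\Gb_m]$. Since $\Vc_1 \hook \Vc$ has virtual codimension $0$ (both are virtual divisors in $[\Ab^1/\Gb_m] \times [\Ab^1/\Gb_m]$), and since the tautological map $D \to \Vc$ exhibits $D$ as $X \times^R_{[\Ab^1/\Gb_m] \times [\Ab^1/\Gb_m]} \Vc$, one finds $V_1 = D \times_\Vc \Vc_1 \simeq X \times^R_{[\Ab^1/\Gb_m] \times [\Ab^1/\Gb_m]} \Vc_1 = D_1$, and similarly $V_2 \simeq E$ and $V_{12} \simeq D_1 \cap E = D_1 \times^R_X E$. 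Your identifications $V_1 = D \times^R_X D_1$, etc., are one codimension too deep, and with them the displayed relation is dimensionally inconsistent: $1_{D/\Zb}$ has virtual dimension $\dim_v(D/\Zb)$ while $[D \times^R_X D_1 \to D]$ has virtual dimension one less. The ``highly non-transverse intersection'' you single out as the main obstacle is an artifact of this misreading. With the correct identification, $[V_1 \to D] = \iota^{\{1\}}_*(1_{D_1/\Zb})$ is already of the required shape; $[V_2 \to D]$ is handled by the inductive hypothesis applied to $E$, of total multiplicity $N-1$; and the correction term lives on $D_1 \cap E = Z(s_E|_{D_1})$, a virtual Cartier divisor on the derived complete intersection $D_1$ to which the inductive hypothesis again applies, with configuration $(n_1-1)(D_1 \cap D_1) + \sum_{i \geq 2} n_i (D_i \cap D_1)$ and the $Z(0)$-type self-intersections absorbed into Euler classes. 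Expanding $c_1(\Oc(E))$ via the formal group law and regrouping by strata then produces the universal series with the stated homogeneity. So the skeleton of your argument is sound, but the key step --- what the decomposition relation actually produces --- is wrong as written and must be corrected before the induction closes.
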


We also record following dimension formula, which can be used to calculate the Krull dimension of $V$ in a nice enough cycle $[V \to X] \in \Omega^{-d}(X)$.

\begin{lem}\label{DegDimLem}
Let $A$ be a discrete valuation ring with residue field $\kappa$, let $V$ and $X$ be integral and projective $A$-schemes and let $V \to X$ be an lci $A$-morphism of relative virtual dimension $d$. Then 
$$\dim(V) = \dim(X) + d.$$
\end{lem}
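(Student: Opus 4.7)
The plan is to factor $V \to X$ as $V \stackrel{i}{\hook} \Pb^N_X \stackrel{\pi}{\to} X$, where $i$ is a regular closed immersion of codimension $c := N-d$ and $\pi$ is the $N$-dimensional projective bundle over $X$, and then to compute $\dim V$ in steps. Since $V/A$ is projective and $X/A$ is separated, $f : V \to X$ is proper, and an $A$-ample line bundle on $V$ is also $X$-ample for $f$; hence for $N \gg 0$ we obtain a closed immersion $i : V \hook \Pb^N_X$ over $X$. From the cotangent complex fibre sequence $i^* \Lb_{\pi} \to \Lb_f \to \Lb_i$, the facts that $\Lb_\pi$ is a vector bundle of rank $N$ (since $\pi$ is smooth) and that $\Lb_f$ has Tor-amplitude $[-1,0]$ and virtual rank $d$ (since $f$ is lci of virtual dimension $d$) force $\Lb_i$ to be perfect of Tor-amplitude $[-1,0]$ and virtual rank $d-N$; as $i$ is a closed immersion, this means it is a regular closed immersion of codimension $c$.

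The dimension computation then proceeds as follows. First, $\pi$ is smooth of relative dimension $N$ with geometrically irreducible fibres and $X$ is integral, so $\Pb^N_X$ is integral of Krull dimension $\dim X + N$. Next, to evaluate $\dim V$, I would pass to an affine open $\Spec R \subset \Pb^N_X$ meeting $V$. Then $R$ is an integral domain of finite type over the regular DVR $A$, hence universally catenary, and $V \cap \Spec R = \Spec R/I$ with $I = (f_1, \ldots, f_c)$ generated by a regular sequence in $R$. The grade-versus-height inequality yields $\mathrm{height}(I) \geq c$, while Krull's Hauptidealsatz yields $\mathrm{height}(I) \leq c$; hence every minimal prime of $I$ has height exactly $c$ in $R$. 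Universal catenariness then gives $\dim R / \pfr = \dim R - c$ for every such minimal prime, and since $V$ is integral we conclude
$$\dim V = \dim \Pb^N_X - c = \dim X + N - (N-d) = \dim X + d.$$

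The hard step will be the final Krull dimension drop: converting ``virtual codimension $c$ in the sense of regular closed immersions'' into ``Krull dimension drops by exactly $c$''. This needs both the grade-versus-height equality for regular sequences and universal catenariness of $\Pb^N_X$, both of which are available here since $A$ is a DVR and hence regular.
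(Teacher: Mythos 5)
Your reduction of $f$ to a regular closed immersion $i\colon V \hook \Pb^N_X$ of codimension $c = N-d$ is correct, and your route is genuinely different from the paper's (which instead reduces to the known statement over fields by comparing $V$ and $X$ with their fibres over $\Spec(A)$, splitting into cases according to which of $V$, $X$ is flat over $A$). But the step you yourself flag as the hard one has a genuine gap: for a finite type domain $R$ over a discrete valuation ring, universal catenariness does \emph{not} imply $\dim(R/\pfr) = \dim(R) - \mathrm{height}(\pfr)$. The standard counterexample is $R = A[x]$ and $\pfr = (tx-1)$ with $t$ a uniformizer: this is a height one prime generated by a regular sequence of length one, yet $R/\pfr$ is the fraction field of $A$, so $\dim(R/\pfr) = 0$ while $\dim(R) - 1 = 1$. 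Catenariness only equalizes lengths of saturated chains between two \emph{fixed} primes; to convert height into a dimension drop you need all maximal chains through $\pfr$ to have the same length, and this fails over a DVR because a closed point of $\Spec(R/\pfr)$ may lie over the generic point of $\Spec(A)$. (Over a field your argument is fine, which is why the subtlety is easy to miss.)

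What rescues the statement is precisely the hypothesis your final step never uses: $V$ is \emph{proper} over $A$, so its closed points lie over the closed point of $\Spec(A)$, and the dimension formula (\cite{stacks} Tag 02JU) then applies without any transcendence-degree loss. Concretely, one should separate the cases where $V$ does or does not dominate $\Spec(A)$, use $\dim(V) = \dim(V_\eta) + 1$ in the flat case and the equidimensionality of the special fibre of $\Pb^N_X$ in the non-flat case, and only then run the codimension count inside the relevant fibre, where the field-case formula is valid. This is in essence what the paper's proof does directly with $V \to X$, bypassing $\Pb^N_X$ entirely. A secondary issue of the same origin: over a DVR an affine open $\Spec(R) \subset \Pb^N_X$ meeting $V$ need not satisfy $\dim(\Spec(R)) = \dim(\Pb^N_X)$ or $\dim(V \cap \Spec(R)) = \dim(V)$ (take an open contained in the generic fibre), so the open would in any case have to be chosen to meet the special fibre of $V$.
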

\begin{proof}
Note that the analogous claim is known over fields, and therefore we obtain the result in the case that $X$ is not flat over $A$ (and hence $X$ and $V$ are projective $\kappa$-schemes). To prove the general case, we will use the dimension formula (see e.g. \cite{stacks} Tag 02JU) to conclude that if $X$ is flat over $A$, then 
\begin{align*}
\dim(X) &= \dim(X_\eta) + 1 \\
&= \dim(X_\kappa) + 1,
\end{align*}
where $X_\eta$ and $X_\kappa$ are the general and the special fibres of $X \to \Spec(A)$ respectively.

If both $V$ and $X$ are flat over $A$, then 
$$
\begin{tikzcd}
V_\kappa \arrow[]{r} \arrow[]{d} & V \arrow[]{d} \\
X_\kappa \arrow[]{r} & X
\end{tikzcd}
$$
is derived Cartesian and therefore $\dim_v(V / X) = \dim_v(V_\kappa / X_\kappa)$, where $\dim_v$ stands for the relative virtual dimension. It follows that
\begin{align*}
\dim(X) &= \dim(X_\kappa) + 1 \\
&= \dim(V_\kappa) + d + 1 \\
&= \dim(V) + d,
\end{align*}
and we are done in this case.

If $X$ is flat over $A$ but $V$ is not, then $V \to X$ factors as 
$$V \to X_\kappa \hook X$$
and $\dim_v(V / X_\kappa) = \dim_v(V / X) + 1$. Therefore
\begin{align*}
\dim(V) &= \dim(X_\kappa) + d + 1 \\
&= \dim(X) + d,
\end{align*}
proving the claim in the last remaining case.
\end{proof}

Finally, we record the following useful formulas, which are going to be useful when classifying low dimensional varieties up to cobordism.

\begin{lem}\label{ClassOfPBLem}
Let $X \in \Cc$, and let $E$ be a vector bundle of rank $r$ on $X$. Then
$$[\Pb^{r-1}_X \to X] - [\Pb(E) - X]$$
is a $\Lb$-linear combination of elements of positive degree in $\Omega^{1-r}(X)$.
\end{lem}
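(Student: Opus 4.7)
The plan is to establish that $[\Pb(E)\to X]\in\Omega^{1-r}(X)$ admits a universal expression as a polynomial in the Chern classes $c_1(E),\ldots,c_r(E)$ with coefficients in $\Lb^*$, whose constant term is precisely $[\Pb^{r-1}_X\to X]$. The lemma is then the statement that the non-constant part of this polynomial, evaluated at $c_j(E)$, is a $\Lb$-linear combination of positive-degree Chern class monomials.

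First I apply the projective bundle formula to the structure morphism $\pi\colon \Pb(E)\to X$: the group $\Omega^*(\Pb(E))$ is a free $\Omega^*(X)$-module with basis $1,\xi,\ldots,\xi^{r-1}$ where $\xi=c_1(\Oc_{\Pb(E)}(1))$, so the Gysin pushforward $\pi_!$ is $\Omega^*(X)$-linear and completely determined by the values $\pi_!(\xi^i)\in\Omega^{i+1-r}(X)$. The class in question is $[\Pb(E)\to X]=\pi_!(1)$.

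Next I set up a splitting principle by iterating the projective bundle construction: this yields a tower of projective bundles $p\colon F\to X$ in $\Cc$ for which $p^*\colon\Omega^*(X)\to\Omega^*(F)$ is injective (by iterating the projective bundle formula) and $p^*E$ admits a complete flag with line-bundle subquotients $L_1,\ldots,L_r$. Writing $\alpha_j=c_1(L_j)$, the class $\xi$ satisfies the relation $\prod_{j=1}^{r}(\xi -_F \alpha_j)=0$ in $\Omega^*(\Pb(p^*E))$, where $-_F$ is formal subtraction for the formal group law $F$. Using this relation together with the projection formula, I would compute $p^*\pi_!(\xi^i)$ as an explicit symmetric expression in the $\alpha_j$'s with coefficients in $\Lb^*$; by the FGL-modified Newton identities these symmetric expressions are polynomials in $p^*c_j(E)$, and by injectivity of $p^*$ the same formulas hold on $X$. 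This produces
$$[\Pb(E)\to X] \;=\; P\bigl(c_1(E),\ldots,c_r(E)\bigr)\;\in\;\Omega^{1-r}(X)$$
for a universal polynomial $P$ with $\Lb^*$-coefficients depending only on $r$ and $F$.

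Specializing to $E=\Oc_X^r$, for which $c_j(E)=0$ whenever $j\geq 1$, collapses the formula to its constant term, and necessarily $[\Pb^{r-1}_X\to X]=P(0,\ldots,0)\in\Lb^{1-r}\subset\Omega^{1-r}(X)$. Subtracting,
$$[\Pb^{r-1}_X\to X]-[\Pb(E)\to X] \;=\; P(0,\ldots,0)-P\bigl(c_1(E),\ldots,c_r(E)\bigr)$$
is the negative of the sum of non-constant monomials of $P$ evaluated at the Chern classes of $E$. Each such monomial is a $\Lb^*$-multiple of a product $\prod_j c_j(E)^{a_j}$ with $\sum_j j a_j>0$, hence a $\Lb$-linear combination of positive-degree elements of $\Omega^*(X)$ living in $\Omega^{1-r}(X)$, as required. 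The main obstacle I anticipate is the explicit derivation of the formal-group-law formula for $\pi_!(\xi^i)$ on the flag bundle and its translation into Chern classes: although both the projective bundle formula and the formal group law are at our disposal, carefully verifying that the resulting symmetric expression in the $\alpha_j$'s is a polynomial in Chern classes with $\Lb^*$-coefficients requires bookkeeping with FGL-Newton identities, and one must also check that the flag bundle construction stays inside $\Cc$ with $p^*$ injective.
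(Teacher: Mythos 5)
Your argument is correct in outline, but it takes a genuinely different route from the paper. You express $[\Pb(E)\to X]=\pi_!(1)$ as a universal $\Lb$-polynomial $P(c_1(E),\dots,c_r(E))$ via the splitting principle and the formal-group-law computation of $\pi_!(\xi^i)$ on a flag bundle, then identify the constant term with $[\Pb^{r-1}_X\to X]$ by specializing to the trivial bundle. This works (the flag bundle is projective over $X$, hence stays in $\Cc$, and $p^*$ is injective by iterating the projective bundle formula), but it forces you through exactly the Quillen-style residue/Newton-identity bookkeeping you flag as the main obstacle. The paper instead avoids the splitting principle entirely: after twisting, it embeds $E$ into a trivial bundle $\Oc^{\oplus N}$ with quotient $F$, observes that $\Pb(E)\hook\Pb^{N-1}_X$ is the derived vanishing locus of the induced section of $\pi^*F(1)$, and hence that $[\Pb(E)\to X]=\pi_!\big(c_{N-r}(F(1))\big)$. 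Expanding $c_{N-r}(F(1))$ by the Whitney formula and the formal group law, the unique term free of Chern classes of $F$ is $c_1(\Oc(1))^{N-r}$, which pushes forward to $[\Pb^{r-1}_X\to X]$, and every other term carries some $c_i(F)$ with $i\geq 1$ and so contributes a $\Lb$-multiple of a positive-degree element. The trade-off: your approach yields an explicit universal formula for $[\Pb(E)\to X]$ in the Chern classes of $E$ itself (a stronger and reusable statement), while the paper's argument is shorter, needs the projective bundle formula only for trivial bundles, and sidesteps the question of whether $\pi_!(\xi^i)$ has been computed in this derived setting.
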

\begin{proof}
By twisting $E$ is necessary, we may assume that there exists a short exact sequence
$$0 \to E \to \Oc^{\oplus N} \to F \to 0$$
of vector bundles. Let $\pi$ be the morphism $\Pb^{N-1}_X \to X$; it follows that
$$[\Pb(E) \to X] = \pi_! \big(c_{N-r}(F(1))\big) \in \Omega^{1-r}(X),$$
is an $\Lb$-linear combination of products of Chern classes of $F$, proving the claim.
\end{proof}

\begin{lem}\label{ClassInBlowUpLem}
Let $Z \hook X$ be a derived regular embedding in $\Cc$. Then
$$1_X - [\bl_Z(X) \to X]$$
is a $\Lb$-linear combination of elements of positive degree in $\Omega^{0}(X)$.
\end{lem}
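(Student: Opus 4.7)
The plan is to combine the derived double point relation coming from the deformation to the normal cone with iterated applications of Lemma \ref{ClassOfPBLem}. Write $c$ for the virtual codimension of $\iota\colon Z \hook X$, i.e., the rank of $\Nc := \Nc_{Z/X}$. If $c \leq 1$ the blow-up $\bl_Z(X) \to X$ is an equivalence and the lemma is trivial, so from now on assume $c \geq 2$.

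First, form the deformation to the normal cone $W := \bl_{Z \times \{\infty\}}(X \times \Pb^1)$, which by Theorem \ref{BlowUpPropertiesThm} is projective and quasi-smooth over $X \times \Pb^1$. The fibre $W_0$ is equivalent to $X$, while $W_\infty$ splits as a sum of virtual Cartier divisors $D_1 + D_2$, where $D_1 \simeq \bl_Z(X)$ is the strict transform of $X \times \{\infty\}$ and $D_2 \simeq \Pb_Z(\Nc \oplus \Oc)$ is the exceptional divisor; they meet along $D_1 \times^R_W D_2 \simeq \Pb_Z(\Nc)$. Applying the derived double point relation of Definition \ref{AlgCobDefn} and rearranging yields
\[
1_X - [\bl_Z(X) \to X] \;=\; [\Pb_Z(\Nc \oplus \Oc) \to X] - [\Pb_{\Pb_Z(\Nc)}(L \oplus \Oc) \to X]
\]
in $\Omega^0(X)$, where $L := \Oc_W(D_1)|_{\Pb_Z(\Nc)}$ is a certain line bundle. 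Both terms on the right factor through $\iota$.

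Next, invoke Lemma \ref{ClassOfPBLem} to replace each projective bundle on the right-hand side by the corresponding trivial projective bundle of the same rank, modulo a $\Lb$-linear combination of positive-degree elements. For the second term this requires two applications of the lemma: first to the rank-$2$ bundle $L \oplus \Oc$ over $\Pb_Z(\Nc)$, and then to the rank-$c$ bundle $\Nc$ over $Z$. The projection formula lets the $\Lb$-scalars pass through the Gysin pushforwards $\iota_!$ and $(\iota\circ p)_!$ (with $p\colon \Pb_Z(\Nc) \to Z$), so these reductions propagate to $X$ without disturbing the positive-degree book-keeping. Writing $[\Pb^n] \in \Lb^{-n}$ for the standard projective-space generators, one arrives at
\[
1_X - [\bl_Z(X) \to X] \;\equiv\; \bigl([\Pb^c] - [\Pb^1]\cdot[\Pb^{c-1}]\bigr) \bullet [Z \to X]
\]
in $\Omega^0(X)$, modulo a $\Lb$-linear combination of positive-degree elements.

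Since $c \geq 2$, the class $[Z \to X] \in \Omega^c(X)$ is itself of positive cohomological degree while the coefficient in parentheses lies in $\Lb^{-c}$, so the right-hand side is visibly a $\Lb$-linear combination of a positive-degree element, which finishes the plan. The only delicate step is the third paragraph: one must verify that the positive-degree corrections produced by Lemma \ref{ClassOfPBLem} remain positive-degree after the Gysin pushforwards, which is immediate from the projection formula since $\Lb$-scalars commute with all bivariant operations.
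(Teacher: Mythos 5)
Your argument is correct and is essentially the paper's own proof: both obtain the identity $1_X - [\bl_Z(X) \to X] = [\Pb_Z(\Nc_{Z/X}\oplus\Oc) \to X] - [\Pb_{\Pb_Z(\Nc_{Z/X})}(\Oc(D_1)\oplus\Oc) \to X]$ from the double point relation for the blow-up of $\Pb^1\times X$ along $\infty\times Z$, and then reduce to trivial projective bundles via Lemma \ref{ClassOfPBLem} before pushing forward to $X$. Your version is slightly more careful about the degree bookkeeping (the relevant Gysin pushforwards to $X$ raise cohomological degree by $c$ resp.\ $1$ because $\iota$ has relative virtual dimension $-c$, so positive-degree corrections stay positive), a point the paper leaves implicit.
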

\begin{proof}
Considering the algebraic cobordism given by the bow up of $\Pb^1 \times X$ at $\infty \times Z$, we obtain the formula
$$1_X - [\bl_Z(X) \to X] = [\Pb_Z(\Nc_{Z/X} \oplus \Oc) \to X] - [\Pb_{\Pb_Z(\Nc_{Z/X})}(\Oc(1) \oplus \Oc) \to X]$$
in $\Omega^0(X)$. The claim follows from applying Lemma \ref{ClassOfPBLem} to the classes $[\Pb_Z(\Nc_{Z/X} \oplus \Oc) \to Z]$ and $[\Pb_{\Pb_Z(\Nc_{Z/X})}(\Oc(1) \oplus \Oc) \to Z]$ and then pushing forward.
\end{proof}

\subsection{Desingularization by alterations}

In this section we are going to recall desingularization results, which will play an important role later in this article. Desingularization by alterations was first introduced by de Jong in \cite{de-jong-alterations}, after which it (and several improvements of the original theorem) have found numerous applications in the study of algebraic geometry in positive and mixed characteristic. Our main reference will be the fairly recent article \cite{temkin-sep} by Temkin.

We start by recalling alterations.

\begin{defn}\label{AlterationDef}
A map $\pi: X \to Y$ of integral Noetherian schemes is called an \emph{alteration} if it is proper, dominant and generically finite. If $\Pc$ is a subset of the prime numbers, then $\pi$ is called a \emph{$\Pc$-alteration} if its degree is a \emph{$\Pc$-number}, i.e., all of its prime divisors lie in $\Pc$. If $\abs{\Pc} \leq 1$, then we will also use the term \emph{$e$-alteration}, where 
$$e = 
\begin{cases}
p \in \Pc & \text{if $\abs{\Pc} = 1$}; \\
1 & \text{if $\Pc = \emptyset$}
\end{cases}
$$ 
is the characteristic exponent of $\Pc$.
\end{defn}

Given an integral scheme $X$, we are going to denote by $\mathrm{char}(X)$ the set of non-zero residual characteristic of $X$. The main result of Temkin is that, under certain assumptions, $X$ admits a $\mathrm{char}(X)$-alteration $X' \to X$ from a regular scheme $X$. Before giving the main result, we recall the following terminology.

\begin{defn}\label{QExDefn}
A Noetherian ring $A$ is \emph{quasi-excellent} if for all primes $\pfr \subset A$, the completion morphism $A_\pfr \to \hat A_\pfr$ is flat (automatic) and has geometrically regular fibres, and for all finite type $A$-algebras $B$, the regular locus of $B$ (i.e., the set of primes $\qfr \subset B$ such that $B_\qfr$ is a regular local ring) is an open subset of $\Spec(B)$. A quasi-excellent ring $A$ is called \emph{excellent} if it is also universally catenary (e.g. $A$ is a regular local ring). A Noetherian scheme $X$ is \emph{(quasi-)excellent} if it admits an open affine cover by spectra of (quasi-)excellent rings.
\end{defn}

The following result is a special case of the main result of \cite{temkin-sep} (combined with \cite{cossart-piltant}).

\begin{thm}
Let $X$ be an integral Noetherian scheme admitting a finite type morphism to a quasi-excellent scheme $Y$ of dimension at most $3$, and let $Z \hook X$ be a closed subscheme. Then there exists a projective $\mathrm{char}(X)$-alteration
$$\pi: X' \to X$$ 
with $X'$ regular and $\pi^{-1}(Z)$ a strict normal crossing divisor. \qed 
\end{thm}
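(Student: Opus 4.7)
The plan is to obtain this statement as a direct specialization of the main theorem of \cite{temkin-sep}, using \cite{cossart-piltant} to feed in the resolution-of-singularities input that Temkin's machine requires. There is no new mathematical content to supply here; the proof is a bookkeeping exercise identifying the hypotheses of our statement with the hypotheses of the two cited results.

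First I would recall Temkin's theorem in its usable form: given an integral Noetherian scheme $X$ of finite type over a quasi-excellent base, provided one has enough resolution of singularities in the relevant dimension range, Temkin produces a projective $\mathrm{char}(X)$-alteration $X' \to X$ with $X'$ regular, and an arbitrarily prescribed closed subscheme $Z \subset X$ can simultaneously be arranged to pull back to an snc divisor. The reason the alteration degree can be restricted to a $\mathrm{char}(X)$-number (rather than an arbitrary integer as in de Jong's original construction \cite{de-jong-alterations}) is Temkin's \emph{tame distillation} procedure, which isolates and removes the wild part of a given alteration. This is the technical heart of \cite{temkin-sep}.

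Second, I would verify that in our situation the resolution input is available. Since $Y$ is quasi-excellent of dimension at most $3$ and $X \to Y$ is of finite type, $X$ is itself quasi-excellent, and every integral scheme occurring in the inductive resolution steps driven by Temkin's machine has a canonical dimension bound coming from $\dim(Y) \leq 3$. Cossart--Piltant \cite{cossart-piltant} establish strong (embedded) resolution of singularities in precisely this setting of quasi-excellent schemes of dimension $\leq 3$, which is exactly what Temkin requires in order to conclude.

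The main — and essentially only — obstacle is a careful match of the statements: one must confirm that the resolution-type hypotheses Temkin demands on the base are implied by the Cossart--Piltant theorem (in particular, embedded resolution of pairs, not just abstract resolution, and for all finite-type integral schemes over $Y$), and that the snc condition in the conclusion on $\pi^{-1}(Z)$ is covered by Temkin's embedded formulation. Given that both \cite{temkin-sep} and \cite{cossart-piltant} are formulated in sufficient generality to interface cleanly, no additional argument is needed beyond citing the two results and checking that $X$, $Y$, and $Z$ in our statement satisfy the stated input conditions of Temkin's theorem. \qed
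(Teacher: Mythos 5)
Your proposal matches the paper exactly: the theorem is stated there as a special case of the main result of \cite{temkin-sep} combined with \cite{cossart-piltant}, with no further argument given, and your bookkeeping of how the two references interface is consistent with that. Nothing to correct.
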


We will only use this result in the case where $Y$ is the spectrum of a field or an excellent discrete valuation ring.

\subsection{Bertini theorems}

Bertini theorems, saying that ample enough line bundles have global sections whose (derived) vanishing locus has good properties, are going to play an important role in the arguments of this paper. We start with the following easy observation, which shows the existence of enough ``classical sections'' in great generality.

\begin{lem}\label{ClassicalBertiniLem}
Let $X$ be a quasi-projective scheme over a Noetherian ring $A$ and let $\Ls$ be an ample line bundle. Then for all $n \gg 0$ the line bundle $\Ls^{\otimes n}$ has a global section $s$ which is not a zerodivisor on $X$. In particular, the derived vanishing locus of $s$ coincides with the classical vanishing locus of $s$.
\end{lem}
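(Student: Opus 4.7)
Since $X$ is Noetherian, $\Oc_X$ has only finitely many associated points $x_1,\ldots,x_k$, and the zerodivisors on $\Oc_X$ are exactly those sections lying in some associated prime. Consequently, a section $s\in H^0(X,\Ls^{\otimes n})$ is a non-zerodivisor if and only if $s(x_i)\neq 0$ in the fibre $\Ls^{\otimes n}(x_i)\cong\kappa(x_i)$ for every $i$. The task thus reduces to producing, for all $n\gg 0$, a section of $\Ls^{\otimes n}$ that does not vanish at any $x_i$.

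The main tool will be graded prime avoidance applied to the section ring $R=\bigoplus_{n\geq 0}H^0(X,\Ls^{\otimes n})$. For each $i$ the set $P_i=\{s\in R\mid s(x_i)=0\}$ is a homogeneous ideal; it is prime because $\Ls^{\otimes n}(x_i)$ is a one-dimensional $\kappa(x_i)$-vector space, so $(st)(x_i)=s(x_i)\otimes t(x_i)$ vanishes only when one factor does. Ampleness of $\Ls$ ensures that for each $i$ some graded piece $R_n$ contains a section nonvanishing at $x_i$, so $R_+\not\subset P_i$. Applying graded prime avoidance to $R_+$ and the finite collection $\{P_1,\ldots,P_k\}$ produces a homogeneous $s_0\in R_+$ of some degree $d_0\geq 1$ outside every $P_i$; this is a non-zerodivisor section of $\Ls^{\otimes d_0}$, and its powers yield non-zerodivisor sections in every positive multiple of $d_0$.

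To upgrade from ``some $n$'' to ``all $n\gg 0$'', I would produce a second non-zerodivisor section $s_1$ of some degree $d_1$ coprime to $d_0$; then every sufficiently large integer can be written as a non-negative integer combination $ad_0+bd_1$ (Chicken McNugget), and the products $s_0^a s_1^b$ supply non-zerodivisor sections in every such degree. One way to obtain $d_1$ is to iterate the graded prime-avoidance argument, replacing $R_+$ by a homogeneous ideal whose graded pieces lie in a residue class modulo $d_0$ different from $0$ (for instance, the ideal generated by some section in a degree coprime to $d_0$, whose existence is again guaranteed by ampleness once $n$ is large enough to make each evaluation $R_n\to\kappa(x_i)$ nontrivial). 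The ``in particular'' clause of the statement is then immediate: for a non-zerodivisor $s$, the Koszul complex $\bigl[\Oc_X\xrightarrow{\cdot s}\Ls^{\otimes n}\bigr]$ is quasi-isomorphic to its degree-$0$ cokernel, so the derived vanishing locus of $s$ coincides with its classical vanishing locus.

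The delicate step is the coprime-degree argument. Because $A$ may have small residue fields at the associated points $x_i$, the vector-space version of prime avoidance is unavailable within a single graded piece $R_n$, so a coprime degree must be extracted by iterating graded prime avoidance rather than by a direct linear-algebra dimension count. The rest of the proof — the Noetherian characterisation of zerodivisors, the primality of $P_i$, and the first application of graded prime avoidance — is routine.
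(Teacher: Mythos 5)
Your reduction to the associated points, the primality of the $P_i$, and the first application of graded prime avoidance are all correct, and they produce a non-zerodivisor section of $\Ls^{\otimes d_0}$ for \emph{some} $d_0\geq 1$. (For comparison: the paper does not prove this lemma at all, but cites \cite{gabber-liu-lorenzini} Theorem 5.1.) The genuine gap is exactly in the step you flag as delicate, and neither device you propose for it can work as stated. A ``homogeneous ideal whose graded pieces lie in a fixed nonzero residue class modulo $d_0$'' is not an ideal of $R$ (it is at best a graded module over the Veronese subring $R^{(d_0)}$), so graded prime avoidance does not apply to it. For the ideal generated by a single section $t$ of degree coprime to $d_0$, the hypothesis $(t)\not\subset P_i$ is literally $t(x_i)\neq 0$ for all $i$, which is the conclusion you are trying to reach; and if you instead take the ideal generated by all of $R_d$ for a large $d$ coprime to $d_0$ (which does satisfy the hypothesis, by global generation), the homogeneous element returned by prime avoidance sits in some degree $d+m$ with $m\geq 0$ uncontrolled, hence not necessarily coprime to $d_0$. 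Note that this is precisely the load-bearing part of the lemma: its applications in the paper need sections in two coprime degrees.

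The fix is to open up the prime-avoidance proof and control degrees by hand; this in fact gives ``all $n\gg 0$'' directly, with no coprimality trick. Ampleness yields $N$ such that $\Ls^{\otimes m}$ is globally generated for all $m\geq N$, so for each single $i$ and each $m\geq N$ there is some $u\in R_m\setminus P_i$. Discard redundant primes so that the $P_i$ are pairwise incomparable, and for $i\neq j$ fix a homogeneous $g_{ij}\in P_j\setminus P_i$, say of degree $\delta_{ij}$; multiplying by elements of $R_m\setminus P_i$ produces homogeneous elements of $P_j\setminus P_i$ in \emph{every} degree $\geq\delta_{ij}+N$. Hence for every $n\geq\max_i\sum_{j\neq i}\delta_{ij}+kN$ one can choose, for each $i$, an element $f_i\in R_n$ lying in $\bigcap_{j\neq i}P_j$ but not in $P_i$ (a product of one element of $P_j\setminus P_i$ for each $j\neq i$ and one element of $R_m\setminus P_i$, with degrees summing to $n$). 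Then $f=f_1+\cdots+f_k\in R_n$ satisfies $f\equiv f_i\not\equiv 0\pmod{P_i}$ for every $i$, giving a non-zerodivisor section of $\Ls^{\otimes n}$ for all $n\gg 0$. Your ``in particular'' clause is fine as written.
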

\begin{proof}
This is classical, but see for example \cite{gabber-liu-lorenzini} Theorem 5.1 for a reference.
\end{proof}

When $A$ is a discrete valuation ring, we can say a lot more using the recent Bertini-regularity result over discrete valuation rings by Ghosh--Krishna.  

\begin{thm}[Cf. \cite{ghosh-krishna} Theorem 9.6]\label{BertiniRegularityThm}
Let $A$ be a discrete valuation ring, let $X$ be a regular quasi-projective $A$-scheme, and let $\Ls$ be a very ample line bundle on $X$. Then for all $n \gg 0$, the line bundle $\Ls^{\otimes n}$ has a global section $s$ whose vanishing locus $Z_s$ is regular and of codimension 1. Moreover, if $X$ is flat over $A$, then we can find such a global section $s$ with $Z_s$ flat over $A$.
\end{thm}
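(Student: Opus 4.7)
The plan is to follow the approach of \cite{ghosh-krishna}, which combines classical Bertini over the generic fiber of $A$ with a Poonen-style density argument over the special fiber; the latter is necessary because the residue field $\kappa$ of $A$ may be finite. First I would use very ampleness of $\Ls$ to embed $X \hook \Pb^N_A$ so that $\Ls \simeq \Oc_X(1)$ and sections of $\Ls^{\otimes n}$ become restrictions of degree-$n$ homogeneous forms on $\Pb^N_A$. For $n \gg 0$, Serre vanishing makes both restriction maps $H^0(X, \Oc_X(n)) \to H^0(X_K, \Oc_{X_K}(n))$ and $H^0(X, \Oc_X(n)) \to H^0(X_\kappa, \Oc_{X_\kappa}(n))$ surjective, where $K$ denotes the fraction field of $A$. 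This reduces the task to understanding sections on each fiber separately and then producing a common lift.

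Over the generic fiber, $X_K$ is a regular quasi-projective variety over the (necessarily infinite) field $K$, so classical Bertini provides a Zariski dense open subset of $H^0(X_K, \Oc_{X_K}(n))$ whose members have regular vanishing divisor. Over the special fiber, if $\kappa$ is infinite the same classical Bertini suffices, but if $\kappa$ is finite one must invoke Poonen's density theorem, or rather its extension to regular (but not necessarily smooth) schemes over a finite field carried out in \cite{ghosh-krishna}, to produce a positive-density subset of good sections $\bar s \in H^0(X_\kappa, \Oc_{X_\kappa}(n))$ with regular vanishing locus.

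The gluing step is the crux. I would pick a good special-fiber section $\bar s$ and consider the affine space of its lifts $s \in H^0(X, \Oc_X(n))$, which is a torsor over the kernel of the reduction. For any such lift, $Z_s \cap X_\kappa = Z_{\bar s}$, and at every point $x$ of this intersection the regularity of $Z_s$ in $X$ at $x$ follows from regularity of $X$ at $x$ combined with the fact that $\bar s$ generates a nonzero element of $\mfr_x/\mfr_x^2$ (ensured by regularity of $Z_{\bar s}$ in $X_\kappa$ at $x$). Within the affine space of lifts the further condition that $s|_{X_K}$ defines a regular divisor on $X_K$ is Zariski open, and nonempty for $n$ large because the reduction map on sections is surjective and one can combine a lift of $\bar s$ with a generic element mapping to zero modulo the uniformizer; hence a single $s$ achieves both regularity conditions simultaneously. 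Finally, for the flatness addendum one notes that flatness of $Z_s$ over $A$ (with $X$ flat) is equivalent to $Z_s$ containing no irreducible component of $X_\kappa$, i.e., to $\bar s$ not vanishing identically on any component of $X_\kappa$; this is an open condition that can be arranged to hold simultaneously with the previous ones.

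The main obstacle is the finite residue field case: running Poonen's density argument on the regular but possibly non-smooth special fiber $X_\kappa$ requires delicate uniform estimates on the loci of sections failing the tangent-space criterion at every closed point of $X_\kappa$ of bounded degree, as well as careful control over the interplay between low-degree and high-degree points. Handling this analysis in the required generality is the technical heart of \cite{ghosh-krishna}, and explains why the result is restricted to discrete valuation base rings rather than being available in greater generality.
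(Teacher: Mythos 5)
Your overall route differs from the paper's: the paper does not reprove the Bertini statement but simply invokes \cite{ghosh-krishna} Theorem 9.6, explaining why its extra hypotheses (surjectivity of $X \to \Spec(A)$ and $\dim X \geq 2$) may be dropped and reading the flatness addendum off the proof in \emph{loc.\ cit.} Your attempt to sketch the internal argument is a legitimate alternative in principle, but as written it has a structural gap in the special-fibre/gluing step.

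The gap: you propose to first find $\bar s \in H^0(X_\kappa, \Ls^{\otimes n})$ whose vanishing locus is regular \emph{as a subscheme of $X_\kappa$}, and then lift. But regularity of $X$ does not imply regularity of its special fibre: for instance (an open subscheme of a projective closure of) $\Spec\big(A[x,y,z]/(xy-\pi)\big)$ is regular and flat over $A$, while its special fibre is singular along a curve. At a singular closed point $x$ of $X_\kappa$, any effective Cartier divisor of $X_\kappa$ through $x$ that contains no component of $X_\kappa$ has embedding dimension strictly larger than its dimension at $x$, hence is \emph{never} regular there; and since the singular locus of $X_\kappa$ may be positive-dimensional, an ample divisor cannot avoid it. So the set of good sections is not the preimage of a set of good sections on the special fibre, and the ``solve on each fibre, then glue'' decomposition collapses. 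What the Poonen-style sieve in \cite{ghosh-krishna} actually controls at a closed point $x$ of the special fibre is the condition $s \notin \mfr_{X,x}^2 \cdot \Ls_x^{\otimes n}$ computed in the \emph{arithmetic} scheme $X$ --- a condition on $\Oc_X/\mfr_x^2$ that sees the mixed-characteristic thickening, and is exactly what regularity of $X$ (rather than of $X_\kappa$) buys. Your generic-fibre step and your characterization of flatness of $Z_s$ (no component of $X_\kappa$ contained in $Z_s$) are fine, and the latter matches how the paper extracts the addendum from the cited proof.
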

\begin{proof}
We may assume without loss of generality that $X$ is connected. Note that the authors assume the structure morphism $X \to \Spec(A)$ to be surjective. However, if this is not the case, then $X$ is quasi-projective over either the fraction field of $A$ or the residue field of $A$, and the claim follows from the Bertini-regularity theorems over fields. Moreover, the authors assume the Krull dimension of $X$ to be at least $2$, but this is only because the surjectivity of $X \to \Spec(A)$ implies that otherwise $X$ is going to be affine and semilocal, and hence all line bundles are going to be trivial, rendering the claim trivial. To prove the last claim, we note that if $X$ is flat over $A$, then $Z_s$ is not flat over $A$ if and only if it has a component defined over the residue field of $A$. But looking at the proof of \cite{ghosh-krishna} Theorem 9.6, it is clear that this does not happen, so we are done.
\end{proof}

\section{Presentations of Chern classes and refined projective bundle formulas}\label{RefPBFSect} 

The purpose of this section is to prove that Chern classes of vector bundles and line bundles often admit presentations in terms of nice cobordism cycles, and to use this to prove several refined versions of the projective bundle formula, which will play an important role in Section \ref{AlgebraicSpivakSect}. We note the unfortunate expositional fact that  some results of Section \ref{RefPBFSubSect} for discrete valuation rings use the results of Section \ref{AlgebraicSpivakSect} for fields, so the logic of Sections \ref{RefPBFSect} and \ref{AlgebraicSpivakSect} proceeds really as follows:
\begin{center}
Section \ref{RefPBFSect} for fields $\Rightarrow$ Section \ref{AlgebraicSpivakSect} for fields $\Rightarrow$ Section \ref{RefPBFSect} for discrete valuation rings $\Rightarrow$ Section \ref{AlgebraicSpivakSect} for discrete valuation rings.
\end{center}
We hope that the reader does not get confused because of this nonlinear narrative.

\subsection{Presentations of Chern classes}\label{ChernPresSubSect}

The purpose of this section is to record several presentability results of Chern classes that are going to be useful in the proofs of the refined projective bundle formulas as well as later in the article. We start with the following observation, which is useful when we know that large enough powers of a line bundle admit nice sections.

\begin{lem}\label{PowerChernLem}
Let $X$ be a finite dimensional divisorial Noetherian derived scheme and let $\Ls$ be a line bundle. Then, given coprime integers $p$ and $q$ and integers $a,b \in \Zb$ such that $ap + bq = 1$, we have that 
$$c_1(\Ls) = \big(a c_1(\Ls^{\otimes p}) + b c_1(\Ls^{\otimes q}) \big) \bullet \Bigg(1_X + \sum_{i=1}^\infty b_i c_1(\Ls)^i \Bigg) \in \Omega^1(X),$$
where $b_i := b_i(p,q,a,b) \in \Lb^{-i}$.
\end{lem}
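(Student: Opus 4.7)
The plan is to reduce the identity to a formal manipulation inside the Lazard ring using the formal group law that governs Chern classes on $\Omega^\bullet$. Recall from \cite{annala-base-ind-cob} Theorem 3.15 that there is a universal formal group law $F \in \Lb^*[[x,y]]$ so that $c_1(\Ls_1 \otimes \Ls_2) = F\bigl(c_1(\Ls_1), c_1(\Ls_2)\bigr)$ in $\Omega^\bullet(X)$. Applied iteratively, this gives the $n$-series identity
$$c_1(\Ls^{\otimes n}) = [n]_F\bigl(c_1(\Ls)\bigr) = \sum_{i \geq 1} \lambda^{(n)}_i c_1(\Ls)^i,$$
where $\lambda^{(n)}_i \in \Lb^{1-i}$ and $\lambda^{(n)}_1 = n$ (degree tracking: $a_{ij} \in \Lb^{1-i-j}$ forces each term to sit in $\Omega^1(X)$).

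First I would take the prescribed linear combination and use $ap+bq = 1$ to compute
$$a c_1(\Ls^{\otimes p}) + b c_1(\Ls^{\otimes q}) = c_1(\Ls) + \sum_{i \geq 2} \mu_i c_1(\Ls)^i = c_1(\Ls) \bullet U\bigl(c_1(\Ls)\bigr),$$
where $\mu_i := a\lambda^{(p)}_i + b\lambda^{(q)}_i \in \Lb^{1-i}$ and $U(t) := 1 + \sum_{j \geq 1} \mu_{j+1} t^j \in \Lb[[t]]$ has $\mu_{j+1} \in \Lb^{-j}$. Since $U(t)$ has constant term $1$, it is a unit in the power series ring $\Lb[[t]]$, and its inverse $V(t) = 1 + \sum_{i \geq 1} b_i t^i$ exists with $b_i \in \Lb^{-i}$ given recursively in terms of the $\mu_j$'s (the degrees are forced by the degrees of the $\mu_j$'s). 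Multiplying the displayed identity by $V(c_1(\Ls))$ then yields the claimed formula, once we observe that the coefficients $b_i$ depend only on $p,q,a,b$ (and the universal formal group law), not on $X$ or $\Ls$.

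The only delicate point is to make sense of the infinite sum $V(c_1(\Ls)) = 1_X + \sum_{i \geq 1} b_i c_1(\Ls)^i$ as an element of $\Omega^0(X)$, since a priori $c_1(\Ls)$ need not be nilpotent in $\Omega^\bullet(X)$. The hypothesis that $X$ is finite dimensional and divisorial is exactly what is needed here: under these assumptions the Chern classes of line bundles satisfy a nilpotence property sufficient to make the sum land in $\Omega^0(X)$ degree-by-degree (or equivalently to interpret it in the appropriate graded completion). Establishing this convergence is the one genuinely non-formal step; everything else is a straightforward power series inversion over the Lazard ring.
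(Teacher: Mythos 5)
Your proposal is correct and is essentially the paper's own argument: the paper likewise uses the formal group law to write $a\,c_1(\Ls^{\otimes p}) + b\,c_1(\Ls^{\otimes q}) = c_1(\Ls) \bullet \bigl(1_X + \sum_{i\geq 1} a_i c_1(\Ls)^i\bigr)$ and then inverts the unit power series. You are in fact slightly more careful than the paper in flagging that the infinite sum requires nilpotence of $c_1(\Ls)$ (which is where finite dimensionality and divisoriality enter); the paper leaves this implicit in the word ``invertible.''
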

\begin{proof}
Indeed, from the formal group law it follows that
$$a c_1(\Ls^{\otimes p}) + b c_1(\Ls^{\otimes q}) = c_1(\Ls) \bullet \Bigg( 1_X +  \sum_{i=1}^\infty a_i c_1(\Ls)^i \Bigg)$$
where $a_i \in \Lb^{-i}$. The claim follows from the fact that $1_X +  \sum_{i=1}^\infty a_i c_1(\Ls)^i$ is invertible.
\end{proof}

The next result will enable us to find nice presentations of Chern classes of all vector bundles by arguing inductively on the rank.

\begin{lem}\label{FormulaForEtaLem}
Let $X$ be a finite dimensional Noetherian derived scheme having an ample line bundle, and let $E$ be a vector bundle on $X$. Then, denoting by 
$$0 \to \Oc(-1) \to E \to Q \to 0$$
the tautological exact sequence of vector bundles on $\Pb(E)$, the class 
$${ 1_{\Pb(E)} + c_1(\Oc(-1)) \bullet [\Pb_{\Pb(E)}(\Oc(-1) \oplus \Oc) \to \Pb(E)] \over 1 - c_r(E) \bullet [\Pb_{\Pb(E)}(E \oplus \Oc) \to \Pb(E)]} \bullet c_{r-1}(Q) \in \Omega^{r-1}\big(\Pb(E)\big)$$ 
pushes forward to $1_X \in \Omega^0(X)$.
\end{lem}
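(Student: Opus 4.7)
The plan is to compute the pushforward $\pi_!$ of the given class directly, where $\pi:\Pb(E)\to X$ is the structure morphism. Write $\xi_-:=c_1(\Oc(-1))$ and set
$$A:=c_1(\Oc(-1))\bullet[\rho],\qquad B:=c_r(E)\bullet[\sigma],$$
where $\rho:\Pb_{\Pb(E)}(\Oc(-1)\oplus\Oc)\to\Pb(E)$ and $\sigma:\Pb_{\Pb(E)}(E\oplus\Oc)\to\Pb(E)$ are the two projections, so that $A,B\in\Omega^0(\Pb(E))$. The fraction $(1+A)/(1-B)$ is interpreted as the geometric series $\sum_{k\geq 0}(1+A)B^k$; after pairing with $c_{r-1}(Q)$ and pushing forward by $\pi_!$, this series becomes effectively finite, since each additional factor of $c_r(E)$ raises the graded Chern-class degree on $X$ beyond its Krull dimension.

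The first step is to apply the Whitney sum formula to the tautological exact sequence $0\to\Oc(-1)\to\pi^*E\to Q\to 0$, yielding the crucial identity
$$\pi^*c_r(E)=\xi_-\bullet c_{r-1}(Q)\in\Omega^r(\Pb(E)).$$
Combined with the projection formula applied to $\rho$, this lets one simplify
$$A\bullet c_{r-1}(Q)=\rho_!\big(c_1(\rho^*\Oc(-1))\bullet\rho^*c_{r-1}(Q)\big)=\rho_!\big((\pi\rho)^*c_r(E)\big),$$
so that further pushforward by $\pi_!$ gives
$$\pi_!\big(A\bullet c_{r-1}(Q)\big)=c_r(E)\bullet[\Pb_{\Pb(E)}(\Oc(-1)\oplus\Oc)\to X]\in\Omega^0(X).$$
Analogous projection-formula manipulations, iterated through $\sigma$ and its fibre self-products, yield explicit formulas for each $\pi_!(B^k\bullet c_{r-1}(Q))$, reducing the whole computation to an identity in $\Omega^0(X)$ that involves only Chern classes of $E$ and projective bundle classes of the form $[\Pb_X(E^{\oplus a}\oplus\Oc^{\oplus b})\to X]$.

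The main obstacle is verifying this final identity. The underlying reason it holds is that the Chow-level formula $\pi_*(c_{r-1}(Q))=1_X$ is already true there, and the correction factor $(1+A)/(1-B)$ is engineered to precisely cancel the cobordism-theoretic discrepancy. To establish the identity I would invoke the splitting principle, pulling back to a flag variety over which $E$ splits as a direct sum of line bundles, and then verifying the resulting identity as a formal manipulation of symmetric functions governed by the formal group law of $\Omega^*$. Lemma~\ref{ClassOfPBLem} and Lemma~\ref{PowerChernLem} control the positive-degree deviations of projective bundle classes from their $\Pb^n$-counterparts, and Proposition~\ref{LInjectivityProp} together with the Landweber--Novikov operations provides an independent check by reducing to polynomial identities with integer coefficients.
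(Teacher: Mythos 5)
The paper does not actually prove this lemma internally: its ``proof'' is a citation to \cite{annala-base-ind-cob} Lemma~3.28, so your proposal has to stand on its own. Your opening reductions are sound: $A,B\in\Omega^0(\Pb(E))$, the Whitney sum formula gives $\pi^*c_r(E)=c_1(\Oc(-1))\bullet c_{r-1}(Q)$ (using $c_r(Q)=0$ since $Q$ has rank $r-1$), the geometric series terminates by nilpotence of Chern classes on a finite-dimensional scheme, and since $B=\pi^*\bigl(c_r(E)\bullet[\Pb_X(E\oplus\Oc)\to X]\bigr)$ is pulled back from $X$, the projection formula reduces the whole statement to the single identity
$$\pi_!\bigl((1_{\Pb(E)}+A)\bullet c_{r-1}(Q)\bigr)=1_X-c_r(E)\bullet[\Pb_X(E\oplus\Oc)\to X]\in\Omega^0(X),$$
equivalently to an explicit formula for $\pi_!(c_{r-1}(Q))$ as $1_X$ minus two correction terms.

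The gap is that this identity \emph{is} the lemma, and you never establish it. Saying the correction factor is ``engineered to precisely cancel the cobordism-theoretic discrepancy'' is circular, and ``invoke the splitting principle and verify a formal manipulation of symmetric functions'' defers the entire content: after splitting $E=\bigoplus\Ls_i$ you would still need a closed-form residue/pushforward formula for $\pi_!$ of polynomials in $\xi=c_1(\Oc(1))$ on $\Pb(\bigoplus\Ls_i)$ in terms of the formal group law, plus a genuine verification that the resulting symmetric-function identity holds --- neither is supplied, and it is exactly here that the difference between cobordism and Chow theory (where $\pi_*c_{r-1}(Q)=1_X$ with no correction) lives. Your proposed ``independent check'' is also not valid: Proposition~\ref{LInjectivityProp} only says $\Lb^*\to\Omega^*(X)$ is injective; it gives no detection principle for arbitrary classes of $\Omega^*(X)$, and the classes appearing here are not a priori in the image of $\Lb^*$. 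For comparison, the cited Lemma~3.28 obtains the formula for $\pi_!(c_{r-1}(Q))$ directly from the defining (pre)cobordism relations via a blow-up/double-point degeneration of the zero section $X\hook\Pb(E\oplus\Oc)$ --- the same mechanism that produces the formula in Lemma~\ref{ClassInBlowUpLem} --- rather than by a splitting-principle computation; some such geometric input is what your argument is missing.
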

\begin{proof}
Indeed, this is just \cite{annala-base-ind-cob} Lemma 3.28.
\end{proof}

Our first two results show that Chern classes of vector bundles on classical schemes can be often presented by classical cycles. These results are not needed later in the article, but they might be useful for other purposes, which is why we record them here. The uninterested reader may skip ahead to Lemma \ref{RegLineChernLem}.

We begin with the case of line bundles.

\begin{lem}\label{LCILineChernLem}
Let $X$ be a quasi-projective scheme over a Noetherian ring $A$ and let $\Ls$ be a line bundle. Then $c_1(\Ls) \in \Omega^1(X)$ is equivalent to an $\Lb$-linear combination of cycles of form $[Z \hook X]$, where $Z \hook X$ is a classical regular embedding. 
\end{lem}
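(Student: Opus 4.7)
The plan is to prove Lemma~\ref{LCILineChernLem} by induction on the Krull dimension of $X$, proving the stronger statement that for any $k \geq 0$ and any line bundles $\Ls_1, \ldots, \Ls_k$ on $X$, the product $c_1(\Ls_1) \bullet \cdots \bullet c_1(\Ls_k) \in \Omega^k(X)$ is an $\Lb$-linear combination of classes $[V \hookrightarrow X]$ with $V \hookrightarrow X$ a classical regular embedding. The case $k=1$ recovers the lemma, but the strengthening is forced on us because expansions via Lemma~\ref{PowerChernLem} will produce higher degree monomials whose treatment requires the same statement for products. The base case $\dim X = 0$ is immediate: such an $X$ is a finite disjoint union of spectra of Artinian local rings, every line bundle is trivial, so all $c_1$'s vanish, and the empty product $1_X = [X \hookrightarrow X]$ is the class of the identity.

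For the inductive step, assume $X$ has dimension $d > 0$ and let $P := c_1(\Ls_1) \bullet \cdots \bullet c_1(\Ls_k)$. Choose an ample line bundle $\Mc$ on $X$ and an integer $n \gg 0$ so that $\Ac := \Ls_1 \otimes \Mc^{\otimes n}$ is also ample. By the formal group law $c_1(\Ls_1) = F\big(c_1(\Ac), \chi(c_1(\Mc^{\otimes n}))\big)$, which (using nilpotency of first Chern classes on finite-dimensional schemes) becomes a finite $\Lb$-polynomial in $c_1(\Ac)$ and $c_1(\Mc)$ with no constant term. Thus $P$ becomes an $\Lb$-combination of monomials of the shape $c_1(\Ac)^a \bullet c_1(\Mc)^b \bullet c_1(\Ls_2) \bullet \cdots \bullet c_1(\Ls_k)$ with $a + b \geq 1$. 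Using Lemma~\ref{ClassicalBertiniLem}, pick coprime integers $p, q$ large enough that each of $\Ac^{\otimes p}, \Ac^{\otimes q}, \Mc^{\otimes p}, \Mc^{\otimes q}$ carries a non-zerodivisor global section, giving classical effective Cartier divisors $Z_p, Z_q, W_p, W_q \hookrightarrow X$ with $c_1(\Ac^{\otimes p}) = [Z_p \hookrightarrow X]$ and so on. Lemma~\ref{PowerChernLem} together with nilpotency then expresses $c_1(\Ac)$ (respectively $c_1(\Mc)$) as an $\Lb$-polynomial in $[Z_p]$ and $[Z_q]$ (respectively $[W_p]$ and $[W_q]$) without constant term; substituting, $P$ becomes an $\Lb$-combination of monomials each containing at least one classical Cartier divisor factor.

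For each such monomial, factor out one Cartier class, say $[Z_p \hookrightarrow X]$, and apply the bivariant projection formula:
$$[Z_p \hookrightarrow X] \bullet \beta = \iota_{p!}\big( \iota_p^* \beta \big),$$
where $\iota_p \colon Z_p \hookrightarrow X$ is the inclusion. Since $\iota_p^*$ sends each remaining factor (either a $c_1$ of a line bundle, or the class of a Cartier divisor, which is a $c_1$ of its defining line bundle) to a $c_1$ of the restricted line bundle on $Z_p$, the class $\iota_p^* \beta$ is again a product of first Chern classes of line bundles on $Z_p$. As $\dim Z_p = d - 1 < d$, the inductive hypothesis expresses it as an $\Lb$-combination of classes $[V \hookrightarrow Z_p]$ with $V \hookrightarrow Z_p$ classical regular embeddings. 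Pushing forward by $\iota_{p!}$ yields the classes $[V \hookrightarrow X]$, and the composition $V \hookrightarrow Z_p \hookrightarrow X$ of two classical regular embeddings is again a classical regular embedding, closing the induction.

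The main obstacle is identifying the correct inductive statement: the case $k = 1$ alone is not closed under the manipulations forced by Lemma~\ref{PowerChernLem}, because unwinding $c_1(\Ls) = \alpha \bullet \big(1_X + \sum b_i c_1(\Ls)^i\big)$ via nilpotency inevitably produces products of Chern classes in $\Omega^{\geq 2}(X)$, and the naive hope of representing these products as $[Z_1 \cap Z_2 \cdots \hookrightarrow X]$ fails because the derived intersections of Cartier divisors built from Lemma~\ref{ClassicalBertiniLem} are generally not classical. Strengthening the claim to handle arbitrary monomials in $c_1$'s is exactly what the projection formula plus dimension reduction can deliver, and once this is in place the induction runs smoothly.
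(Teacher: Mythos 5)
Your proof is correct and follows essentially the same route as the paper's: classical Bertini sections in two coprime powers of an ample bundle, Lemma~\ref{PowerChernLem}, the projection formula, and induction on Krull dimension, with the non-ample case reduced to the ample one via the formal group law. The only difference is organizational: you strengthen the inductive hypothesis to arbitrary products of first Chern classes, which cleanly absorbs the higher-order correction terms that the paper handles more tersely by invoking the inductive assumption on the powers $c_1(\Ls\vert_{Z_j})^i$ (these can also be unwound within the $k=1$ statement by a secondary recursion on the exponent combined with the projection formula, so the strengthening is a convenient reorganization rather than strictly forced).
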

\begin{proof}
We start by proving the claim for ample line bundles by arguing inductively on the Krull dimension of $X$, the base case of an empty scheme being obvious. By Lemma \ref{ClassicalBertiniLem} we can find coprime integers $p$ and $q$ and effective Cartier divisors $i_1: Z_1 \hook X$ and $i_2: Z_2 \hook X$ in the linear systems of $\Ls^{\otimes p}$ and $\Ls^{\otimes q}$ respectively. By Lemma \ref{PowerChernLem} and the projection formula it follows that
\begin{align*}
c_1(\Ls) &= \big(a[Z_1 \hook X] + b[Z_2 \hook X] \big) \bullet \Bigg( 1_X +  \sum_{i=1}^\infty b_i c_1(\Ls)^i \Bigg) \\
&= a[Z_1 \hook X] + b[Z_2 \hook X] + a i_{1!} \Bigg(\sum_{i=1}^\infty b_i c_1(\Ls \vert_{Z_1})^i \Bigg) 
+ b i_{2!} \Bigg(\sum_{i=1}^\infty b_i c_1(\Ls \vert_{Z_2})^i \Bigg)
\end{align*}
with $a,b \in \Zb$ and $b_i \in \Lb$, so we have proven the claim for $\Ls$ ample by the inductive assumption.

Suppose then that $\Ls$ is an arbitrary line bundle on $X$. By the quasi-projectivity of $X$ we can find ample line bundles $\Ls_1$ and $\Ls_2$ so that $\Ls \cong \Ls_1 \otimes \Ls_2^\vee$, and therefore it follows from the formal group law that
$$c_1(\Ls) = \sum_{i,j} b_{ij} c_1(\Ls_1)^i \bullet c_1(\Ls_2)^j$$
for some $b_{ij} \in \Lb$. Hence the claim follows from the ample case using the projection formula and the fact that ample line bundles are stable under pullbacks along immersions.
\end{proof}

It is then not very hard to deal with arbitrary vector bundles.

\begin{prop}\label{LCIChernPresentationProp}
Let $X$ be a quasi-projective scheme over a Noetherian ring $A$ and let $E$ be a vector bundle on $X$. Then $c_i(E) \in \Omega^i(X)$ is equivalent to an integral combination of cycles of form $[V \to X]$ with $V \to X$ lci.
\end{prop}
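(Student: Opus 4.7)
The plan is to proceed by induction on the rank $r$ of $E$. For the base case $r = 1$, Lemma \ref{LCILineChernLem} expresses $c_1(E)$ as an $\Lb$-linear combination of classes $[Z \hook X]$ of classical regular embeddings. By Proposition \ref{LSmoothProp}, each $\Lb$-coefficient is itself an integer combination of classes $[W \to \Spec(\Zb)]$ of smooth projective $\Spec(\Zb)$-schemes, so each product $[W \to \Spec(\Zb)] \bullet [Z \hook X]$ is represented by $[W \times Z \to X]$, whose structure morphism factors as the smooth morphism $W \times Z \to Z$ followed by the classical regular embedding $Z \hook X$, and is therefore lci.

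For the inductive step, consider the projective bundle $\pi : \Pb(E) \to X$ with tautological sequence $0 \to \Oc(-1) \to \pi^* E \to Q \to 0$, so that $\rank(Q) = r-1$. Lemma \ref{FormulaForEtaLem} provides a class $\eta \in \Omega^{r-1}(\Pb(E))$ with $\pi_!(\eta) = 1_X$, built from the Chern classes $c_1(\Oc(-1))$, $c_{r-1}(Q)$, $\pi^* c_r(E)$, and the classes of two smooth projective bundles over $\Pb(E)$. By the projection formula for the Gysin pushforward,
$$c_i(E) = \pi_!\big(\pi^* c_i(E) \bullet \eta \big),$$
so it suffices to show that $\pi^* c_i(E) \bullet \eta$ is an integer combination of lci cycles on $\Pb(E)$; indeed, since $\pi$ is smooth and projective, $\pi_!$ sends cycles $[V \to \Pb(E)]$ with $V \to \Pb(E)$ lci to cycles $[V \to X]$ with $V \to X$ lci (the cotangent complex of the composition has Tor-dimension $\leq 1$ by the evident fibre sequence).

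By the Whitney sum formula applied to the tautological sequence, $c_i(\pi^* E) = c_i(Q) + c_1(\Oc(-1)) \bullet c_{i-1}(Q)$, where $c_r(Q) = 0$ since $\rank(Q) = r-1$. Combined with the inductive hypothesis for $Q$ and the base case for $\Oc(-1)$, this expresses both $\pi^* c_i(E)$ and each Chern class constituent of $\eta$ (including $\pi^* c_r(E) = c_1(\Oc(-1)) \bullet c_{r-1}(Q)$) as integer combinations of lci cycles on $\Pb(E)$; the two projective bundle cycles in $\eta$ are smooth, hence already lci. Sums and products of integer combinations of lci cycles remain so, products corresponding to derived fibre products over $\Pb(E)$, which preserve the lci property. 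The main technical subtlety is that the denominator of $\eta$ is a formal geometric series, so the resulting integer combination is a priori infinite; we interpret it degree-wise, in the same sense that formal power series over $\Lb$ appear elsewhere in the theory (e.g. through the formal group law acting on Chern classes).
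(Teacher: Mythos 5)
Your proof follows the paper's argument essentially verbatim: induction on the rank, base case from Lemma \ref{LCILineChernLem}, inductive step via Lemma \ref{FormulaForEtaLem}, the projection formula and the Whitney sum formula; the extra details you supply (realizing the $\Lb$-coefficients by smooth cycles via Proposition \ref{LSmoothProp}, and checking that $\pi_!$ preserves the lci condition) are precisely what the paper leaves implicit. One small correction to your final remark: since $c_r(E)$ has degree $r$ and $[\Pb_{\Pb(E)}(E \oplus \Oc) \to \Pb(E)]$ has degree $-r$, the geometric series inverting $1 - c_r(E) \bullet [\Pb_{\Pb(E)}(E \oplus \Oc) \to \Pb(E)]$ is a sum of degree-zero terms and so cannot be interpreted degree-wise; it is in fact a finite sum because Chern classes are nilpotent on the finite-dimensional scheme $\Pb(E)$ (the same mechanism that makes the formal group law well defined), and this is already packaged into the cited Lemma \ref{FormulaForEtaLem}.
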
 
\begin{proof}
We will proceed by induction on the rank $r$ of the vector bundle, the base case $r = 1$ following from Lemma \ref{LCILineChernLem}. Suppose that $E$ is a rank $r$ vector bundle on $X$ and that the claim is known for all quasi-projective $A$-schemes and for all vector bundles of rank at most $r$. Then, by Lemma \ref{FormulaForEtaLem} and the projection formula, we have that
$$c_i(E) = \pi_! \Bigg( { 1_{\Pb(E)} + c_1(\Oc(-1)) \bullet [\Pb_{\Pb(E)}(\Oc(-1) \oplus \Oc) \to \Pb(E)] \over 1 - c_r(E) \bullet [\Pb_{\Pb(E)}(E \oplus \Oc) \to \Pb(E)]} \bullet c_{r-1}(Q) \bullet c_i(E) \Bigg) \in \Omega^*(X)$$
where $\pi$ is the natural map $\Pb(E) \to X$. Moreover, the element 
$${ 1_{\Pb(E)} + c_1(\Oc(-1)) \bullet [\Pb_{\Pb(E)}(\Oc(-1) \oplus \Oc) \to \Pb(E)] \over 1 - c_r(E) \bullet [\Pb_{\Pb(E)}(E \oplus \Oc) \to \Pb(E)]} \bullet c_{r-1}(Q) \bullet c_i(E)$$
can be expressed in the desired form by the inductive assumption and the Whitney sum formula, so the claim follows by pushing forward.
\end{proof}

Next we show that we can prove stronger results when working over discrete valuation rings. Again, we start with the case of line bundles.

\begin{lem}\label{RegLineChernLem}
Let $X$ be a regular quasi-projective scheme over a discrete valuation ring (or a field) $A$, and let $\Ls$ be a line bundle on $X$. Then $c_1(\Ls) \in \Omega^1(X)$ is equivalent to an $\Lb$-linear combination of cycles of form $[Z \hook X]$, where $Z$ is a regular scheme. If $X$ is flat over $A$, then we can moreover assume the $Z$ to be flat over $A$. 
\end{lem}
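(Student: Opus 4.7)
The plan is to follow the proof of Lemma \ref{LCILineChernLem} closely, but to replace Lemma \ref{ClassicalBertiniLem} with the Bertini-regularity theorem \ref{BertiniRegularityThm} of Ghosh--Krishna, which produces vanishing loci that are genuinely regular (and flat over $A$ whenever $X$ is flat) rather than only Cartier. The argument will proceed by induction on $\dim X$, the base case $\dim X = 0$ being trivial since a zero-dimensional regular quasi-projective $A$-scheme is a finite disjoint union of spectra of fields, on which every line bundle is trivial.

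For the inductive step, first assume that $\Ls$ is very ample. Theorem \ref{BertiniRegularityThm} applied to two sufficiently large coprime powers $\Ls^{\otimes p}$ and $\Ls^{\otimes q}$ produces regular codimension-$1$ vanishing loci $i_1: Z_1 \hook X$ and $i_2: Z_2 \hook X$ (flat over $A$ if $X$ is flat) with $c_1(\Ls^{\otimes p}) = [Z_1 \hook X]$ and $c_1(\Ls^{\otimes q}) = [Z_2 \hook X]$. Choosing integers $a, b$ with $ap + bq = 1$ and applying Lemma \ref{PowerChernLem} together with the projection formula yields
$$c_1(\Ls) = a[Z_1 \hook X] + b[Z_2 \hook X] + a\, (i_1)_!\Bigl(\sum_{i \geq 1} b_i c_1(\Ls|_{Z_1})^i\Bigr) + b\, (i_2)_!\Bigl(\sum_{i \geq 1} b_i c_1(\Ls|_{Z_2})^i\Bigr),$$
where $b_i \in \Lb^{-i}$. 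Since each $Z_j$ is regular, quasi-projective over $A$, and of strictly smaller dimension than $X$, only the internal sums require further work.

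The main nuisance is that these sums involve arbitrary \emph{powers} of $c_1$, whereas the lemma's statement concerns only first Chern classes. I therefore propose to strengthen the inductive hypothesis: for every regular quasi-projective $A$-scheme $Y$, every line bundle $M$ on $Y$, and every $k \geq 0$, the class $c_1(M)^k \in \Omega^k(Y)$ is an $\Lb$-linear combination of $[W \hook Y]$ with $W$ regular (and flat over $A$ if $Y$ is). Once this is known for dimensions below $\dim X$, the above formula yields the lemma for very ample $\Ls$ on $X$. The strengthened statement in dimension $d$ follows from its first-Chern-class version in the same dimension by a secondary recursion on $k$: writing $c_1(M) = \sum_l \alpha_l [W_l \hook Y]$ and applying the projection formula gives
$$c_1(M)^k = \sum_l \alpha_l\, (j_l)_! \bigl(c_1(M|_{W_l})^{k-1}\bigr),$$
where the inner expressions live on the strictly lower-dimensional $W_l$, so the outer induction on $\dim Y$ closes the loop.

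Finally, the general case of an arbitrary line bundle reduces to the very ample case as in Lemma \ref{LCILineChernLem}: by quasi-projectivity of $X$, write $\Ls \cong \Ls_1 \otimes \Ls_2^\vee$ with $\Ls_1, \Ls_2$ very ample, and apply the formal group law to expand $c_1(\Ls)$ as an $\Lb$-linear series in products of powers of $c_1(\Ls_1)$ and $c_1(\Ls_2)$; each monomial is handled by the strengthened induction combined with the very ample case. I expect the main obstacle to be organizational, namely setting up cleanly the joint induction between the first-Chern-class and higher-powers statements; convergence of the infinite sums coming from Lemma \ref{PowerChernLem} is already ensured by the finite-dimensionality conventions of the paper.
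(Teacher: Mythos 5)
Your proposal is correct and follows essentially the same route as the paper: induction on the Krull dimension, Theorem \ref{BertiniRegularityThm} applied to coprime powers of a very ample $\Ls$, Lemma \ref{PowerChernLem} with the projection formula, and reduction of the general case to the very ample one via $\Ls \cong \Ls_1 \otimes \Ls_2^\vee$ and the formal group law. Your explicit strengthening of the inductive hypothesis to cover powers (and, implicitly, mixed monomials) of first Chern classes is a point the paper's proof leaves tacit, so this is a faithful and slightly more careful rendering of the same argument.
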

\begin{proof}
Let us first assume that $\Ls$ is very ample. We will proceed by induction on the Krull dimension of $X$ the base case of an empty scheme being obvious. By Theorem \ref{BertiniRegularityThm} we can find coprime integers $p$ and $q$ and regular divisors $i_1: Z_1 \hook X$ and $i_2: Z_2 \hook X$ in the linear systems of $\Ls^{\otimes p}$ and $\Ls^{\otimes q}$ respectively. By Lemma \ref{PowerChernLem} and the projection formula it follows that
\begin{align*}
c_1(\Ls) &= \big(a[Z_1 \hook X] + b[Z_2 \hook X] \big) \bullet \Bigg( 1_X +  \sum_{i=1}^\infty b_i c_1(\Ls)^i \Bigg) \\
&= a[Z_1 \hook X] + b[Z_2 \hook X] + a i_{1!} \Bigg(\sum_{i=1}^\infty b_i c_1(\Ls \vert_{Z_1})^i \Bigg) 
+ b i_{2!} \Bigg(\sum_{i=1}^\infty b_i c_1(\Ls \vert_{Z_2})^i \Bigg)
\end{align*}
with $a,b \in \Zb$ and $b_i \in \Lb$, and the claim follows from the inductive assumption. Note that if $X$ was flat over $A$, then we could have chosen $Z_i$ to be flat over $A$ too, proving the stronger claim as well.

In general, we can find very ample line bundles $\Ls_1$ and $\Ls_2$ so that $\Ls \cong \Ls_1 \otimes \Ls_2^\vee$, and therefore, by the formal group law,
$$c_1(\Ls) = \sum_{i,j \geq 0} b_{ij} c_1(\Ls_1)^i \bullet c_1(\Ls_2)^j \in \Omega^*(X)$$
for some $b_{ij} \in \Lb$. As very ample line bundles are stable under restrictions to closed subschemes, the general case follows from the very ample case.
\end{proof}

Finally, we deal with general vector bundles.

\begin{prop}\label{RegularChernPresentationProp}
Let $X$ be a regular quasi-projective scheme over a discrete valuation ring (or a field) $A$, and let $E$ a vector bundle on $X$. Then $c_i(E) \in \Omega^i(X)$ is equivalent to an integral combination of cycles of form $[V \to X]$ with $V$ regular. If $X$ is flat over $A$, then we can moreover assume the $V$ to be flat over $A$.
\end{prop}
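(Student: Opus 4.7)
The plan is to mimic the inductive structure of Proposition \ref{LCIChernPresentationProp}, arguing by induction on the rank $r$ of $E$ and using Lemma \ref{RegLineChernLem} in place of Lemma \ref{LCILineChernLem}. The base case $r=1$ is exactly the content of Lemma \ref{RegLineChernLem}. Suppose the result is known for all vector bundles of rank less than $r$ on any regular quasi-projective $A$-scheme; we deduce it for $E$ of rank $r$ on $X$.

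Consider the projection $\pi \colon \Pb(E) \to X$. Since $\pi$ is smooth and projective, $\Pb(E)$ is itself a regular quasi-projective $A$-scheme, and is flat over $A$ whenever $X$ is. Combining Lemma \ref{FormulaForEtaLem} with the projection formula gives
\[
c_i(E) = \pi_!\bigl(\pi^* c_i(E) \bullet \eta\bigr) \in \Omega^i(X),
\]
where $\eta \in \Omega^{r-1}\bigl(\Pb(E)\bigr)$ is the explicit expression of Lemma \ref{FormulaForEtaLem}. By the Whitney sum formula applied to the tautological sequence $0 \to \Oc(-1) \to \pi^* E \to Q \to 0$, both $\pi^* c_i(E) = c_i(\pi^* E)$ and every Chern class appearing in $\eta$ (namely $c_1(\Oc(-1))$, $c_r(\pi^* E)$ and $c_{r-1}(Q)$) is an integral polynomial in $c_1(\Oc(-1))$ and in the Chern classes $c_k(Q)$ with $k \leq r-1$.

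It therefore suffices to show that any product of the form
\[
c_1(\Oc(-1))^{a_0} \bullet c_{k_1}(Q) \bullet \cdots \bullet c_{k_m}(Q) \bullet [P \to \Pb(E)] \in \Omega^*\bigl(\Pb(E)\bigr),
\]
with $P$ one of the projective bundles $\Pb_{\Pb(E)}(\Oc(-1) \oplus \Oc)$ or $\Pb_{\Pb(E)}(\pi^* E \oplus \Oc)$ arising from $\eta$, is equivalent to an $\Lb$-linear combination of regular (and flat-over-$A$, when $X$ is) cycles over $\Pb(E)$. The key idea of the proof is to evaluate such a product from right to left, using the projection formula to restrict the relevant bundle to the already regular base of a cycle and then invoke either the inductive hypothesis or Lemma \ref{RegLineChernLem} there. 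Concretely, if we have reached $\sum_j n_j [V_j \to \Pb(E)]$ with every $V_j$ regular, then
\[
c_k(Q) \bullet [V_j \to \Pb(E)] = (V_j \to \Pb(E))_!\bigl(c_k(Q|_{V_j})\bigr),
\]
and by the inductive hypothesis applied to the rank-$(r-1)$ bundle $Q|_{V_j}$ on the regular quasi-projective $A$-scheme $V_j$, the class $c_k(Q|_{V_j})$ is an $\Lb$-linear combination of regular cycles over $V_j$; pushing forward to $\Pb(E)$ preserves regularity. Multiplication by $c_1(\Oc(-1))$ is handled identically via Lemma \ref{RegLineChernLem}, and flatness over $A$ is preserved throughout because every operation in sight (vector bundle pullback, projective bundle, closed subscheme from Bertini, smooth base change) preserves flatness.

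Applying $\pi_!$ then expresses $c_i(E)$ as an $\Lb$-linear combination of regular (and flat, when $X$ is so) cycles over $X$. To upgrade $\Lb$-linear coefficients to integer coefficients, I would invoke Proposition \ref{LSmoothProp}: each class in $\Lb$ is representable by a smooth $X$-scheme $Y \to X$, and $Y \times^R_X V$ is smooth over the regular scheme $V$, hence regular (and flat over $A$ when $V$ is). The main conceptual obstacle is the right-to-left evaluation step — establishing that a product of Chern classes acting on a regular cycle can itself be represented by regular cycles — but this is exactly what the projection formula combined with the inductive hypothesis delivers; the rank drop on passing from $E$ to $Q$ guarantees that we always have access to the inductive assumption.
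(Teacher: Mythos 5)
Your proposal is correct and follows essentially the same route as the paper: induction on the rank, the identity of Lemma \ref{FormulaForEtaLem} combined with the projection formula, and the Whitney sum formula to reduce everything to $c_1(\Oc(-1))$ and Chern classes of the rank-$(r-1)$ quotient $Q$. You in fact spell out more carefully than the paper does how the product of Chern classes is evaluated on regular cycles and how the $\Lb$-linear coefficients coming from Lemma \ref{RegLineChernLem} are converted into integral combinations via Proposition \ref{LSmoothProp}.
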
 
\begin{proof}
We will proceed by induction on the rank $r$ of the vector bundle, the base case $r = 1$ following from Lemma \ref{RegLineChernLem}. Suppose that $E$ is a rank $r$ vector bundle on $X$ and that the claim is known for all quasi-projective $A$-schemes and for all vector bundles of rank at most $r$. Then, by Lemma \ref{FormulaForEtaLem} and the projection formula, we have that
$$c_i(E) = \pi_! \Bigg( { 1_{\Pb(E)} + c_1(\Oc(-1)) \bullet [\Pb_{\Pb(E)}(\Oc(-1) \oplus \Oc) \to \Pb(E)] \over 1 - c_r(E) \bullet [\Pb_{\Pb(E)}(E \oplus \Oc) \to \Pb(E)]} \bullet c_{r-1}(Q) \bullet c_i(E) \Bigg) \in \Omega^*(X)$$
where $\pi$ is the natural map $\Pb(E) \to X$. Moreover, the element 
$${ 1_{\Pb(E)} + c_1(\Oc(-1)) \bullet [\Pb_{\Pb(E)}(\Oc(-1) \oplus \Oc) \to \Pb(E)] \over 1 - c_r(E) \bullet [\Pb_{\Pb(E)}(E \oplus \Oc) \to \Pb(E)]} \bullet c_{r-1}(Q) \bullet c_i(E)$$
can be expressed in the desired form by the inductive assumption and the Whitney sum formula, so the claim follows by pushing forward.
\end{proof}

\subsection{Refined projective bundle formula}\label{RefPBFSubSect}

The purpose of this section is to prove refined versions of projective bundle formula that are going to be useful later in the article. We start with the easier one. Note that the first part of the following result is not needed later in the article.

\begin{prop}\label{FirstRefinedPBFProp}
Let $X$ be a quasi-projective derived scheme over a finite dimensional Noetherian ring $A$, and let $\pi$ be the projection $\Pb^n \times X \to X$. Then a cobordism class 
$$[V \to \Pb^n \times X] \in \Omega^d\big(\Pb^n \times X \big)$$
is equivalent to
$$\sum_{i=0}^{n}  c_1(\Oc(1))^i \bullet \pi^*(\alpha_i)$$
and
\begin{enumerate}
\item if $V$ is a complete intersection scheme, then $\alpha_i \in \Omega^{d-i}(X)$ are equivalent to integral combinations of complete intersection schemes mapping projectively to $X$;
\item if $A$ is a discrete valuation ring (or a field) and $V$ is regular, then $\alpha_i \in \Omega^{d-i}(X)$ are integral combinations of regular schemes mapping projectively to $X$.
\end{enumerate}
\end{prop}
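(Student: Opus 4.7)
The plan is to combine the projective bundle formula (PBF, Property 3 of $\Omega'^\bullet$) with the Chern class presentations of Section \ref{ChernPresSubSect}. First, by the PBF, the class $[V \to \Pb^n \times X] \in \Omega^d(\Pb^n \times X)$ decomposes uniquely as $\sum_{i=0}^n h^i \bullet \pi^*(\alpha_i)$, where $h := c_1(\Oc(1))$ and $\alpha_i \in \Omega^{d-i}(X)$. Since $\Pb^n$ is regular and projective, Poincaré duality applied to it gives a dual basis $\xi_0, \ldots, \xi_n \in \Omega^*(\Pb^n)$ — expressible as polynomials in $h$ with $\Lb^*$-coefficients — characterized by $\pi_!(\xi_i \bullet h^j) = \delta_{ij}$. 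Using the projection formula, this extracts
$$\alpha_i = \pi_!\bigl(\xi_i \bullet [V \to \Pb^n \times X]\bigr) = (\pi \circ f)_*\bigl( \xi_i(c_1(\Ls)) \bigr) \in \Omega^{d-i}(X),$$
where $f: V \to \Pb^n \times X$ is the given morphism and $\Ls := f^*\Oc(1)$ is the induced line bundle on $V$.

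The main step is to express each power $c_1(\Ls)^m$ as an $\Lb$-linear combination of cycles $[W \hook V]$ with $W$ a classical complete intersection (in case 1) or a regular scheme (in case 2). I would accomplish this by iterated application of Lemma \ref{LCILineChernLem} in case 1, respectively Lemma \ref{RegLineChernLem} in case 2: first decompose $c_1(\Ls)$ on $V$ itself; next, on each resulting closed subscheme $Z \hook V$, decompose $c_1(\Ls\vert_Z)$ by a second application of the same lemma; and iterate. At each stage the ambient scheme remains quasi-projective over $A$ (being closed in $V$, which is quasi-projective over $A$); in case 2 it remains regular, so that Theorem \ref{BertiniRegularityThm} applies at every step; in case 1, the composition of classical regular embeddings into the CI scheme $V$ automatically produces classical CI subschemes, so Lemma \ref{LCILineChernLem} continues to apply. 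Pushing forward along the projective quasi-smooth morphism $\pi \circ f: V \to X$ then presents $\alpha_i$ as an $\Lb$-linear combination of classes $[W \to X]$ with $W$ of the required type and the structure map projective.

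Finally, to upgrade from $\Lb$-linear to integer-linear combinations, I would invoke Lazard's theorem: every element of $\Lb^*$ is an integer combination of classes $[M]$ of smooth projective $\Zb$-schemes, and the product $[M] \bullet [W \to X]$ is realized as $[M \times_\Zb W \to X]$. Smoothness of $M$ over $\Zb$ ensures $M \times_\Zb W$ is smooth over $W$ and therefore regular (resp. CI) whenever $W$ is, while projectivity of $M$ over $\Zb$ keeps the composite morphism to $X$ projective. The main obstacle is the iterative Chern class presentation step: in case 2, preserving regularity of the ambient at every stage depends essentially on the Bertini-regularity Theorem \ref{BertiniRegularityThm}, which forces the DVR (or field) restriction on $A$; in case 1, the classical non-zerodivisor Bertini Lemma \ref{ClassicalBertiniLem} plays the same role under the milder hypothesis on $A$.
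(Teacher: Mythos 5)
Your proposal is correct and follows essentially the same route as the paper: decompose via the projective bundle formula, recover each $\alpha_i$ from the pushforwards $\pi_!\big(c_1(\Oc(1))^i \bullet [V \to \Pb^n\times X]\big) = (\pi\circ f)_!\big(c_1(\Ls)^i\big)$ (your dual basis $\xi_i$ just inverts the triangular system the paper writes down), and then present powers of $c_1(\Ls)$ on $V$ by the iterated Bertini arguments of Lemmas \ref{LCILineChernLem} and \ref{RegLineChernLem} (packaged in the paper as Propositions \ref{LCIChernPresentationProp} and \ref{RegularChernPresentationProp}). The only cosmetic difference is your appeal to Lazard-ring generators over $\Zb$ for the $\Lb$-to-integral upgrade, where the paper's mechanism is Proposition \ref{LSmoothProp} (coefficients realized by towers of projective bundles over the cycle, which preserves regularity/lci-ness); the conclusion is the same.
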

\begin{proof}
By projective bundle formula there exist unique $\alpha_i \in \Omega^*(X)$ so that
$$[V \to \Pb^n \times X] = \sum_{i=0}^{n} c_1(\Oc(1))^i \bullet \pi^*(\alpha_i) \in \Omega_\bullet(\Pb^n \times X)$$
holds; our task is to find the desired presentation for $\alpha_i$. But this is easy: clearly
$$ \pi_* \Big( c_1(\Oc(1))^i \bullet [V \to \Pb^n \times X] \Big) = \alpha_{n-i} + [\Pb^1] \alpha_{n-i-1} + \cdots + [\Pb^{n-i}] \alpha_0,$$
where $[\Pb^i]$ is the class of $\Pb^i$ in $\Lb^{-i}$, so the first claim follows from Proposition \ref{LCIChernPresentationProp} and the second claim follows from Proposition \ref{RegularChernPresentationProp}.
\end{proof}

\begin{rem}
There is a more general version of Proposition \ref{FirstRefinedPBFProp} that holds for all projective bundles and not just the trivial ones. However, the proof is much more complicated, and since we will not need the more general result, we have chose only to prove a special case.
\end{rem}

The following result is one of the crucial results needed in the proof of algebraic Spivak's theorem as it allows us to approximate the class of a generically finite morphisms to a projective space

\begin{thm}\label{SecondRefinedPBFThm}
Let $A$ be a Henselian discrete valuation ring with a perfect residue field (or let $A$ be an arbitrary field), and let $f: V \to \Pb^n_A$ be a projective morphism from an integral regular scheme $V$ of relative virtual dimension 0. If $f$ is generically finite of degree $d \geq 0$, then 
$$[V \to \Pb^n_A] = d + \sum_{i=1}^{n} c_1(\Oc(1))^i \bullet \pi^*(\alpha_i) \in \Omega^0(\Pb^n_A)[e^{-1}]$$
where $\alpha_i$ are integral combinations of regular projective $A$-schemes  and $e$ is the residual characteristic exponent of $A$. If $A$ is a field, then the equality holds without inverting $e$.
\end{thm}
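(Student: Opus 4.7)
The plan is to reduce the theorem, via projective bundle formula, to the single identity $\alpha_0 = d \cdot 1_A$ in $\Omega^0(\Spec A)[e^{-1}]$, and then to verify this identity by an explicit computation on a well-chosen $A$-section of $\pi \colon \Pb^n_A \to \Spec A$.

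By Proposition \ref{FirstRefinedPBFProp} in its second case (available because $V$ is regular over the DVR or field $A$), one already has an expansion
\[
[V \to \Pb^n_A] = \sum_{i=0}^{n} c_1(\Oc(1))^i \bullet \pi^*(\alpha_i) \in \Omega^0(\Pb^n_A),
\]
in which each $\alpha_i \in \Omega^{-i}(\Spec A)$ is an integral combination of regular projective $A$-schemes. Fixing any section $i_0 \colon \Spec A \hookrightarrow \Pb^n_A$ (say the $A$-point $[1:0:\cdots:0]$), triviality of $i_0^*\Oc(1)$ kills the $i \ge 1$ contributions under $i_0^*$, so the expansion collapses to
\[
\alpha_0 = i_0^*[V \to \Pb^n_A] = [V_{i_0} \to \Spec A],
\]
where $V_{i_0} := V \times^R_{\Pb^n_A} \Spec A$ is the derived fibre; by base change it is quasi-smooth over $\Spec A$ of relative virtual dimension $0$, and its generic fibre has total length $d$.

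The final step is to identify $[V_{i_0} \to \Spec A]$ with $d \cdot 1_A$. I would apply Bertini-regularity (Theorem \ref{BertiniRegularityThm}) on the regular scheme $V$, combined with Lemma \ref{PowerChernLem} to transfer between $c_1(\Oc(1))$ and $c_1(\Oc(m))$ for $m \gg 0$, in order to rechoose the section so that $V_{i_0}$ becomes a classical regular scheme, finite flat over $A$ of rank $d$. Writing $V_{i_0} = \coprod_j \Spec B_j$, the perfectness of the residue field (or field) hypothesis together with Henselianness lets one decompose each $B_j/A$ as a tower of monogenic extensions $A' \subseteq A'[x]/(f(x))$; for a monogenic step of degree $r$, the homogenization of $f$ realises $\Spec A'[x]/(f(x)) \hookrightarrow \Pb^1_{A'}$ as a regular section of $\Oc(r)$, so this class equals $c_1(\Oc(r)) \in \Omega^1(\Pb^1_{A'})$, which pushes forward to $r \cdot 1_{A'}$ by the universal identity $\pi_! c_1(\Oc(r)) = r \cdot 1_{A'}$ (verified over $\Spec \Zb$ on a section with $r$ disjoint roots and transferred by base change). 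Iterating along the tower yields $[\Spec B_j \to \Spec A] = [B_j : A] \cdot 1_A$, and summing over $j$ produces $\alpha_0 = d \cdot 1_A$.

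The hardest step will be arranging via Bertini for $V_{i_0}$ to be classical and regular of the predicted rank, while keeping the identification of $\alpha_0$ in the PBF expansion intact. In the DVR case the formal-group-law corrections coming from wildly ramified residual extensions obstruct a clean monogenic decomposition in the residual characteristic, and it is precisely these contributions that the inversion of $e$ absorbs; in the field case these obstructions do not arise, and the same argument gives the equality already in $\Omega^0(\Pb^n_A)$ without any inversion.
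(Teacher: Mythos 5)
Your endgame is the paper's: once $\alpha_0$ is known to be an integral combination of finite, flat, integral, regular $A$-algebras of total degree $d$, the Henselian-plus-perfect-residue-field hypothesis is used exactly as you describe to factor each such algebra into a tower of monogenic extensions, and the homogenization trick $[\Spec A'[x]/(f)] = \pi_!c_1(\Oc(r)) = r$ is precisely Lemmas \ref{RegularFiniteDVRClassLem} and \ref{FiniteClassesLem}. The gap is in how you get to that point. You extract $\alpha_0$ as $i_0^![V\to\Pb^n_A]$ for a section $i_0$ and then propose to ``rechoose the section so that $V_{i_0}$ becomes a classical regular scheme, finite flat over $A$ of rank $d$.'' This is a transversality statement that can fail: the locus of $\Pb^n_A$ over which $f$ is finite is only a dense open subset, and over a finite field it may contain no rational point, while over a DVR its complement (a proper closed subset of $\Pb^n_A$) can contain the entire special fibre $\Pb^n_\kappa$, in which case \emph{no} $A$-section of $\pi$ lands in the finite locus and every derived fibre $V_{i_0}$ has positive-dimensional components (so even your claim that its generic fibre has length $d$ is unjustified). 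Theorem \ref{BertiniRegularityThm} does not repair this: it produces regular hypersurface sections of $\Oc(m)$ on $V$, not $A$-points of $\Pb^n_A$, and passing to $\Oc(m)$ via Lemma \ref{PowerChernLem} changes the class being computed, so it no longer collapses the projective bundle expansion at a section. The paper avoids the issue entirely by extracting $\alpha_0$ as $\pi_*\big(c_1(\Oc(1))^n\bullet[V\to\Pb^n_A]\big)$, using the presentation already built into Proposition \ref{FirstRefinedPBFProp} together with the dimension count of Lemma \ref{DegDimLem} to see that the degree-zero part consists of finite flat regular $A$-algebras, and then computing the total degree by specializing along $\Omega^*\to K^0$.

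Two further points. First, you do not treat the non-dominant case ($d=0$ with $V$ a regular $\kappa$-variety not flat over $A$); this is handled in the paper via the computation of $\Omega^{-1}\big(\Spec(\kappa)\big)[e^{-1}]$ from Corollary \ref{CobRingCor}, and it is the \emph{only} place where $e$ is inverted. Second, and relatedly, your diagnosis that the $e$-inversion absorbs obstructions from ``wildly ramified residual extensions'' in the monogenic decomposition is not correct: that decomposition works integrally (perfectness gives a primitive residue extension, Henselianness lifts it, and the remaining totally ramified step is monogenic by Serre), so the dominant case holds without inverting $e$.
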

\begin{proof}
Suppose first that $f$ is dominant, which also implies that $V$ is flat over $A$. Using Proposition \ref{FirstRefinedPBFProp} and Lemma \ref{DegDimLem} it follows that
$$[V \to \Pb^n_A] = \sum_{i=0}^{n}  c_1(\Oc(1))^i \bullet \pi^*(\alpha_i)$$
with
$$\alpha_0 = \sum_i n_i [\Spec(B_i) \to \Spec(A)] \in \Omega^0\big(\Spec(A)\big)$$
where $B_i$ are finite, flat, integral and regular $A$-algebras, and $n_i \in \Zb$. Using the specialization morphism of cohomology theories $\Omega^* \to K^0$ one checks that 
$$\sum_i n_i \deg(B_i / A) = d,$$
so the claim follows from Lemma \ref{RegularFiniteDVRClassLem} below.

Suppose then that $f$ is not dominant. If $V$ is flat over $A$ (e.g. if $A$ is a field), then the above proof shows what we want. Otherwise $V$ is a regular $\kappa$-variety, and by Proposition \ref{FirstRefinedPBFProp} and Lemma \ref{DegDimLem}
$$\alpha_0 = \sum_i n_i [C_i \to \Spec(A)] \in \Omega^0\big( \Spec(A)\big)$$ 
with $C_i$ smooth curves over $\kappa$ and $n_i \in \Zb$. Using the computation of $\Omega^{-1}\big(\Spec(\kappa)\big)[e^{-1}]$ from Corollary \ref{CobRingCor} (whose proof only needs this result for fields), we see that 
\begin{align*}
\alpha_0 &= b[\Spec(\kappa) \hook \Spec(A)] \\
&= 0 \in \Omega^0\big( \Spec(\kappa) \big)[e^{-1}],
\end{align*} 
where $b \in \Lb^{-1}[e^{-1}]$. This finishes the proof.
\end{proof}

We needed the following computation of cobordism classes of finite regular algebras in the above proof.

\begin{lem}\label{RegularFiniteDVRClassLem}
Let $A$ be a Henselian discrete valuation ring with a perfect residue field  (or let $A$ be an arbitrary field) and let $B$ be a finite, flat, integral and regular $A$-algebra of degree $d$. Then 
$$[\Spec(B) \to \Spec(A)] = d \in \Omega^0\big(\Spec(A)\big).$$
\end{lem}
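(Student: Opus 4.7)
The plan is to reduce to a monogenic extension $B=A[x]/(f(x))$, to embed $\Spec(B)$ as a degree-$d$ divisor in $\Pb^1_A$, and then to exploit the fact that Chern classes of line bundles are independent of the chosen defining section in order to replace the computation by that of a fat point at the origin, whose class is finally evaluated via the snc-type relations of Proposition \ref{SNCRelationsProp}.

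First I would reduce to the case where $B=A[x]/(f(x))$ with $f$ monic of degree $d$. For a Henselian DVR $A$ with perfect residue field $\kappa$, the algebra $B$ is itself a DVR whose residue field extension $\ell/\kappa$ is automatically separable, and standard results then imply that $B$ is generated by a single element over $A$: one lifts a primitive element of $\ell/\kappa$ and, if necessary, combines it with a uniformizer of $B$. For $A$ an arbitrary field, I would instead induct on $[B:A]$: any nontrivial finite field extension admits a proper simple (hence monogenic) subextension $A\subset A'\subset B$, and the bivariant functoriality of pushforwards together with the multiplicativity $[B:A]=[B:A']\cdot[A':A]$ reduces the general statement to the monogenic case applied to each step of the tower.

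Next I would identify $[\Spec(B)\to\Spec(A)]$ as a pushforward of a Chern class. Via the primitive element, embed $\Spec(B)\hook\Ab^1_A\subset\Pb^1_A$ as the derived vanishing locus of the homogenization of $f$, which is a global section of $\Oc_{\Pb^1_A}(d)$; thus $[\Spec(B)\hook\Pb^1_A]=c_1(\Oc(d))\in\Omega^1(\Pb^1_A)$. Since the Chern class is independent of the choice of defining section and $X_0^d$ is not a zerodivisor on the integral scheme $\Pb^1_A$, we equally have $[V(X_0^d)\hook\Pb^1_A]=c_1(\Oc(d))$, where $V(X_0^d)=\Spec(A[x]/x^d)$ is the fat point of length $d$. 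Pushing forward along $\pi:\Pb^1_A\to\Spec(A)$ reduces the claim to the equality $[\Spec(A[x]/x^d)\to\Spec(A)]=d$ in $\Omega^0(\Spec(A))$.

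Finally, I would apply Proposition \ref{SNCRelationsProp} to the virtual Cartier divisor $V(X_0^d)\simeq d\cdot V(X_0)$ inside $X=\Pb^1_A$, whose unique component is $D_1=V(X_0)\cong\Spec(A)$ with associated line bundle $\Oc(1)$. The key simplification is that both $\Spec(A[x]/x^d)$ and $\Spec(A)$ are spectra of local rings and hence have trivial Picard group, so $c_1(\Oc(1))$ restricts to zero on every $D_I$ occurring in the snc expansion. The $I=\emptyset$ contribution is governed by a universal power series of strictly positive cohomological degree and therefore vanishes, while the $I=\{1\}$ term reduces to the constant term of the universal series $F^d_{\{1\}}$, which is pinned down to the integer $d$ by comparison with Chow theory (where the snc relation degenerates to the familiar identity $[D]=d[D_1]$). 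This yields $1_{V(X_0^d)}=d\cdot\iota_*(1_{V(X_0)})$, whose pushforward to $\Spec(A)$ is $d\cdot 1_A$, completing the proof. The main obstacle is precisely this last step: one must carefully justify the vanishing of the $I=\emptyset$ contribution and the integrality of the $F^d_{\{1\}}$ constant, despite the fact that the closed embedding $D_1\hook D$ fails to be a derived regular embedding when $d\geq 2$.
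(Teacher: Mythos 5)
Your proposal is correct and shares the paper's skeleton: reduce to the monogenic case by factoring $A\to B$ as a tower of primitive finite syntomic extensions (using a lifted primitive residue element plus a uniformizer in the Henselian DVR case, and a tower of simple extensions in the field case), realize $\Spec(B)$ as the derived vanishing locus of a degree-$d$ form on $\Pb^1_A$, and compute $[\Spec(B)\to\Spec(A)]=\pi_!\big(c_1(\Oc(d))\big)$. Where you diverge is the final evaluation. The paper simply observes that on $\Pb^1_A$ one has $c_1(\Oc(1))\bullet c_1(\Oc(m))=0$ (choose sections with disjoint vanishing loci), so the formal group law collapses to $c_1(\Oc(d))=d\,c_1(\Oc(1))$, and $\pi_!\big(c_1(\Oc(1))\big)=1$ finishes the proof in one line. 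You instead swap the section for $X_0^d$ and evaluate the fat point $\Spec(A[x]/x^d)$ via Proposition \ref{SNCRelationsProp}. This route does work, and the obstacle you flag at the end is not actually one: the snc relation only uses the projective pushforward $\iota^I_*$ and the fundamental classes $1_{D_I/\Zb}$ of the (regular) strata, so it is insensitive to $D_1\hook D$ failing to be a derived regular embedding. But your detour introduces two soft spots that the direct argument avoids: you must fix the convention for the $I=\emptyset$ term (the sum is really over nonempty $I$; "positive cohomological degree" alone does not kill a degree-one series evaluated on $\Pb^1_A$), and you must identify the constant term of $F^d_{\{1\}}$ with $d$. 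The latter is true --- for $r=1$ the relation is just $c_1(\Oc(dD_1))=[d]_F\big(c_1(\Oc(D_1))\big)$ pushed to $D$, whose leading coefficient is $d$ --- but pinning it down "by comparison with Chow theory" quietly uses that the universal series specialize correctly under $\Lb^*\to\Zb$; it is cleaner to read the coefficient off the formal group law directly, at which point you have essentially rederived the paper's computation. Also remember to pass between $\Omega_\bullet$ and $\Omega^0(\Spec(A))$ via Poincar\'e duality at the end, since the snc relation is stated homologically.
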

\begin{proof}
Note that if $A$ is a field, then $B$ is a finite field extension of $A$ and the claim follows trivially from Lemma \ref{FiniteClassesLem}, as any field extension factors as a composition of primitive field extensions.

Let us then deal with the case where $A$ is a Henselian discrete valuation ring with a perfect residue field $\kappa$. By \cite{stacks} Tag 04GG $B$ is a Henselian discrete valuation ring; let us denote its residue field by $l$. We will denote $n := [l : k]$. Let $\alpha \in l$ be a primitive element over $k$, let $\bar f$ be its minimal polynomial and let $f \in A[x]$ be a monic lift of $f$. It is clear that $A' := A[x]/(f)$ is a Henselian discrete valuation ring of degree $n$ over $A$. Moreover, as $B$ is Henselian and $\alpha$ is a simple root of $\bar f$, we can lift $\alpha$ to a root $\tilde \alpha$ of $f$ in $B$, giving rise to a morphism $\psi: A' \to B$ by sending $x$ to $\tilde \alpha$. We have factored $A \to B$ as 
$$A \stackrel \phi \to A' \stackrel \psi \to B$$
with $\phi$ primitive by construction and $\psi$ primitive by \cite{serre-local-fields} Chapter I Proposition 18, so the claim follows from Lemma \ref{FiniteClassesLem} below.
\end{proof}

The above lemma needed the following result in its proof.

\begin{lem}\label{FiniteClassesLem}
Let $A$ be a Noetherian ring of finite Krull dimension, and let $\psi: A \to B$ be a finite syntomic morphism of degree $d$. If $\psi$ factors as a composition of primitive finite syntomic morphisms, then 
$$[\Spec(B) \to \Spec(A)] = d \in \Omega^0\big(\Spec(A)\big).$$
\end{lem}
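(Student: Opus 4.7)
The plan is to reduce to the case where $\psi$ is itself primitive, and then to compute the class of the primitive extension directly by embedding $\Spec B$ into $\Pb^1_A$ and inductively computing $c_1(\Oc(d))$ in $\Omega^*(\Pb^1_A)$.

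Write $\psi$ as a composition $A = A_0 \to A_1 \to \cdots \to A_n = B$ with each $\psi_i: A_{i-1} \to A_i$ primitive finite syntomic of degree $d_i$, so that $d = d_1 \cdots d_n$. Each $\psi_i$ is finite, flat, and lci, hence projective and quasi-smooth of relative virtual dimension $0$, and so it admits a Gysin pushforward $\psi_{i!}$ sending $1_{A_i}$ to $[\Spec A_i \to \Spec A_{i-1}]$. By functoriality of Gysin pushforwards one has $[\Spec B \to \Spec A] = \psi_{1!} \circ \cdots \circ \psi_{n!}(1_B)$, and iterating using the projection formula and $\Zb$-linearity shows that if each $\psi_{i!}(1_{A_i}) = d_i \cdot 1_{A_{i-1}}$ then $[\Spec B \to \Spec A] = d \cdot 1_A$. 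It therefore suffices to treat the primitive case $B = A[x]/(f)$ with $f$ monic of degree $d$.

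Since $f$ is monic, its homogenization $\tilde f$ is a section of $\Oc(d)$ on $\Pb^1_A$ that is locally a non-zerodivisor, and whose vanishing locus equals $\Spec B \subset \Ab^1_A \subset \Pb^1_A$. Hence $[\Spec B \hook \Pb^1_A] = c_1(\Oc(d))$, and by functoriality of Gysin pushforwards
$$[\Spec B \to \Spec A] = \pi_!\bigl(c_1(\Oc(d))\bigr),$$
where $\pi: \Pb^1_A \to \Spec A$ is the structure map. Writing $h := c_1(\Oc(1))$ and using that $\pi_!(h) = 1_A$ (since $h$ is represented by a section of $\pi$), it suffices to prove the identity $c_1(\Oc(d)) = d\,h$ in $\Omega^1(\Pb^1_A)$.

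I would establish this identity by induction on $d$, the case $d = 1$ being tautological. For $d \geq 2$, consider the section $\tilde g := y\,x^{d-1}$ of $\Oc(d)$; it is locally a non-zerodivisor, so its derived and classical vanishing loci agree and decompose as the disjoint union $V(\tilde g) = V(y) \sqcup V(x^{d-1}) \subset \Pb^1_A$, with $V(y) = \{\infty\}$ a section of $\pi$ and $V(x^{d-1})$ the $(d-1)$-fold thickening of $\{0\}$. By the independence of the first Chern class on the choice of defining section (applied to both $\Oc(d)$ and $\Oc(d-1)$),
$$c_1(\Oc(d)) \;=\; [V(y) \hook \Pb^1_A] + [V(x^{d-1}) \hook \Pb^1_A] \;=\; c_1(\Oc(1)) + c_1(\Oc(d-1)) \;=\; h + (d-1)h \;=\; dh$$
by the inductive hypothesis, and applying $\pi_!$ concludes the proof. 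The only subtle step is the disjoint-union decomposition of $V(\tilde g)$ together with the regularity of $\tilde g$, both of which reduce to routine computations in the two standard affine charts of $\Pb^1_A$.
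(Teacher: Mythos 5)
Your proposal is correct and follows the paper's proof almost verbatim: reduce to the primitive case $B = A[x]/(f)$, realize $\Spec(B)$ as the derived vanishing locus of the homogenization of $f$ in $\Gamma(\Pb^1_A;\Oc(d))$, and conclude that $[\Spec(B)\to\Spec(A)] = \pi_!\big(c_1(\Oc(d))\big) = d\,\pi_!\big(c_1(\Oc(1))\big) = d$. The only (harmless) divergence is in how the identity $c_1(\Oc(d)) = d\,c_1(\Oc(1))$ on $\Pb^1_A$ is justified: the paper simply asserts it (it follows from the formal group law combined with $c_1(\Oc(1))^2 = 0$, a consequence of the projective bundle formula), whereas you give a self-contained geometric induction via the split section $y x^{d-1}$, which works equally well.
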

\begin{proof}
It is enough to consider the case where $B$ itself is primitive over $A$. Hence
$$B \cong A[x]/(f)$$
for some $f = x^d + a_{d-1} x^{d-1} + \cdots + a_0 \in A[x]$, and therefore $\Spec(B)$ is the derived vanishing locus of
$$F(x,y) = x^d + a_{d-1} x^{d-1} y + \cdots + a_0 y^d \in \Gamma(\Pb^1_A; \Oc(d))$$
and hence
$$[\Spec(B) \to \Spec(A)] = \pi_! \big( c_1(\Oc(d)) \big)$$
where $\pi$ is the projection $\Pb^1_A \to \Spec(A)$. The claim then follows from noticing that $c_1(\Oc(d)) = d c_1(\Oc(1))$ and $\pi_!\big( c_1(\Oc(1))\big) = 1$.
\end{proof}

We are going to use Theorem \ref{SecondRefinedPBFThm} in Section \ref{AlgebraicSpivakSect} in the form of the following corollary.

\begin{cor}\label{ThirdRefinedPBFCor}
Let $A$ be a Henselian discrete valuation ring with a perfect residue field $\kappa$ (or let $A$ be an arbitrary field), and let $f: D \to \Pb^n_A$ be
a projective morphism of relative virtual dimension $0$ from an snc scheme $D$. If $f$ is generically finite of degree $d \geq 0$, then 
$$[D \to \Pb^n_A] = d \bullet 1_{\Pb^n_A / \Zb} + \sum_{i=i}^{n} \pi^*(\alpha_i)  \bullet c_1(\Oc(1))^i \bullet 1_{\Pb^n_A/\Zb} \in \Omega_\bullet(\Pb^n_A)[e^{-1}]$$
with $\alpha_i \in \Omega^{-i}(X)$ integral combinations of regular projective $A$-schemes. If $A$ is a field, then the equality holds without inverting $e$.
\end{cor}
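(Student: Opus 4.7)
The plan is to reduce the problem to Theorem \ref{SecondRefinedPBFThm} via the snc relation of Proposition \ref{SNCRelationsProp}. Realize the snc scheme $D$ as an snc divisor $D = D_1 + \cdots + D_r$ inside a regular ambient scheme $X$, so that every intersection $D_I := \bigcap_{i \in I} D_i$ is regular and quasi-projective over $A$. Proposition \ref{SNCRelationsProp}, followed by pushforward along $f$, gives
$$[D \to \Pb^n_A] = \sum_{\emptyset \ne I \subset [r]} (f \circ \iota^I)_* \big(F_I \bullet 1_{D_I/\Zb}\big) \in \Omega_\bullet(\Pb^n_A),$$
where $F_I$ is a homogeneous polynomial of degree $1 - |I|$ in the restricted Chern classes $c_1(\Oc(D_j))|_{D_I}$ with Lazard coefficients.

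Next I would expand $F_I \bullet 1_{D_I/\Zb}$ as a $\Zb$-linear combination of cycles $[V \to D_I]$ with $V$ regular and of Krull dimension $\dim D$. This proceeds by iterating Lemma \ref{RegLineChernLem}: the lemma applied to the regular quasi-projective $D_I$ writes each $c_1(\Oc(D_{j_1}))|_{D_I}$ as an $\Lb$-combination of $[Z \hookrightarrow D_I]$ with $Z$ a regular Cartier divisor; then successive Chern-class factors restrict to the regular $Z$ where Lemma \ref{RegLineChernLem} applies once more, producing nested regular subschemes of $D_I$ after finitely many steps. The Lazard coefficients are represented by smooth projective $\Zb$-schemes (via Proposition \ref{LSmoothProp}), whose fibre products with the regular cycles preserve regularity. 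Pushing each resulting $V$ forward to $\Pb^n_A$ yields a projective morphism of relative virtual dimension $0$ from a regular scheme (decomposed into its integral components if necessary), to which Theorem \ref{SecondRefinedPBFThm} applies:
$$[V \to \Pb^n_A] = d_V + \sum_{i=1}^{n} c_1(\Oc(1))^i \bullet \pi^*(\gamma_{V,i}) \in \Omega^0(\Pb^n_A)[e^{-1}]$$
with $d_V \in \Zb$ and $\gamma_{V,i}$ integral combinations of regular projective $A$-schemes. Summing over all $V$ and $I$ produces an expression $[D \to \Pb^n_A] = \alpha_0 \bullet 1_{\Pb^n_A/\Zb} + \sum_{i=1}^{n} \pi^*(\alpha_i) \bullet c_1(\Oc(1))^i \bullet 1_{\Pb^n_A/\Zb}$ with $\alpha_i$ of the required form.

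Finally, the constant $\alpha_0 \in \Zb[e^{-1}]$ is identified with $d$ by the standard generic-degree argument of Theorem \ref{SecondRefinedPBFThm}: by the recursion from Proposition \ref{FirstRefinedPBFProp}, $\alpha_0$ equals the pushforward to $\Spec(A)$ of the derived fibre of $f$ over a generic point of $\Pb^n_A$, which is a zero-dimensional $A$-scheme of total degree $d$; Lemmas \ref{FiniteClassesLem} and \ref{RegularFiniteDVRClassLem}, together with the field case of Corollary \ref{CobRingCor} to kill non-dominant residue-field components in the DVR setting, identify this pushforward with $d$. The main obstacle is the iterated expansion of products of line-bundle Chern classes into regular cycles; I plan to handle it by repeated application of Lemma \ref{RegLineChernLem} on nested regular subschemes, thereby avoiding Bertini-type transversality arguments altogether.
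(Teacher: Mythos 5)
Your proposal follows the paper's proof essentially verbatim: realize $D$ as an snc divisor in a regular ambient scheme, apply the snc relations of Proposition \ref{SNCRelationsProp}, present the resulting Chern-class corrections on the regular strata $D_I$ by regular cycles (the paper cites Proposition \ref{RegularChernPresentationProp}, which amounts to your iteration of Lemma \ref{RegLineChernLem}), and feed each resulting integral regular cycle of relative virtual dimension $0$ into Theorem \ref{SecondRefinedPBFThm}. The only points to tighten are that $D$ may carry multiplicities, $D = n_1 D_1 + \cdots + n_r D_r$, and that the constant term is most cleanly identified via $d = \sum_i n_i \deg(D_i/\Pb^n_A)$ together with the observation that every other contributing cycle is non-dominant (hence of degree $0$) --- which is the paper's sharper version of your generic-fibre argument.
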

\begin{proof}
Let $X$ be a regular scheme in which $D$ is an snc divisor, let $D_1,...,D_r$ be the prime components of $D$ and let $n_1,...,n_r > 0$ be such that
$$D = n_1 D_1 + \cdots + n_r D_r$$
as effective Cartier divisors on $X$. Then the snc-relations and Poincaré duality imply that
$$[D \to \Pb^n_A] = \sum_{I \subset [r]} f^I_!\Big(F_I^{n_1,...,n_r}\big(c_1(\Oc(D_1)), ..., c_1(\Oc(D_r))\big) \Big) \in \Omega^0(\Pb^n_A)$$
where $f^I$ is the canonical morphism $D_I := \bigcap_{i \in I} D_i \to \Pb^n_A$. Applying Proposition \ref{RegularChernPresentationProp} to the elements
$$F_I^{n_1,...,n_r}\big(c_1(\Oc(D_1)), ..., c_1(\Oc(D_r))\big) \in \Omega^{1 - \abs{I}}(D_I),$$
we conclude that
$$[D \to \Pb^n_A] = n_1 [D_1 \to \Pb^n_A] + \cdots + n_r [D_r \to \Pb^n_A] + \beta \in \Omega^0(\Pb^n_A),$$
where $\beta$ is an integral combination of cycles of form $[V \to \Pb^n_A] \in \Omega^0(\Pb^n_A)$ where $V$ is regular and the morphism $V \to \Pb^n_A$ is non-dominant. Combining this with the fact that
$$d = n_1 \deg(D_1 / \Pb^n_A) + \cdots + n_r \deg(D_r / \Pb^n_A)$$
with $\deg(D_i / \Pb^n_A)=0$ whenever $D_i \to \Pb^n_A$ is non-dominant, the claim follows from Theorem \ref{SecondRefinedPBFThm} and right multiplication by $1_{\Pb^n_A / \Zb}$.
\end{proof}

\section{Algebraic Spivak's theorem and applications}\label{AlgebraicSpivakSect}

The purpose of this section is to prove the algebraic Spivak's theorem (Theorem \ref{GeneralSpivakThm}) and applying it to prove further properties of algebraic bordism. We start by fixing notation in Section \ref{DeformDiagSubSect}, after which we prove Spivak's theorem in Section \ref{ProofOfSpivakSubSect}. Section \ref{ExtendingCyclesSubSect} is dedicated to proving the Extension theorem (Theorem \ref{ExtensionThm}).

Throughout the section $A$ will be either a field or an excellent Henselian discrete valuation ring with a perfect residue field $\kappa$. We will denote by
$$\Omega^A_*(X) := \Omega^{-*}\big( X \to \Spec(A) \big)$$
the \emph{$A$-bordism groups} of quasi-projective derived $A$-schemes $X$, where the grading is given by the relative virtual dimension over $A$. As $A$ is regular, we have the natural isomorphism
$$- \bullet 1_{A / \Zb}: \Omega^A_*(X) \xrightarrow{\cong} \Omega_\bullet(X)$$ 
but note that the right hand side does not have a natural grading. The $A$-bordism groups $\Omega^A_*(X)$ have a canonical $\Omega^*\big(\Spec(A)\big)$-module structure given by the bivariant product, which also has an explicit formula
\begin{align*}
[V \to \Spec(A)] . [W \to X] &= \pi_X^*([V \to \Spec(A)]) \bullet [W \to X] \\
&= [V \times^R_{\Spec(A)} W \to X], 
\end{align*}
where $\pi_X$ is the structure morphism $X \to \Spec(A)$. Pushforwards and Gysin pullbacks are maps of $\Omega^*\big(\Spec(A)\big)$-modules.
 
\subsection{Deformation diagrams}\label{DeformDiagSubSect}

Suppose that $X$ is a connected quasi-smooth and quasi-projective derived $A$-scheme, and let $i: X \hook U$ be a closed embedding with $U$ an open subscheme of $\Pb^n_A$ for some $n \geq 0$. We will denote by 
$$\bar i_\cl: \overline{X}_\cl \hook \Pb^n_A$$
the scheme theoretic closure of the truncation $X_\cl$ of $X$ in $U$. The purpose of this section is to record and study the basic properties of three deformation diagrams playing an important role in Sections \ref{ProofOfSpivakSubSect} and \ref{ExtendingCyclesSubSect}. Let us start with the derived deformation diagram.

\begin{cons}[Derived deformation diagram of $i$]\label{DeformDiag1}
Denoting the derived blow up of $\infty \times X \hook \Pb^1 \times U$ by $M(X/U)$ gives rise to a derived Cartesian diagram
$$
\begin{tikzcd}
X \arrow[hook]{r}{i^\infty} \arrow[hook]{d}{j'^\infty} & \Pb(\Nc_{X/U} \oplus \Oc) + \bl_X(U) \arrow[]{r} \arrow[hook]{d}{j^\infty} & \infty \arrow[hook]{d} \\
\Pb^1 \times X \arrow[hook]{r}{\hat i} & M(X/U) \arrow[]{r} & \Pb^1 \\
X \arrow[hook]{r}{i} \arrow[hook']{u}[swap]{j'^0} & U \arrow[]{r} \arrow[hook']{u}[swap]{j^0} & 0 \arrow[hook']{u}
\end{tikzcd}
$$
where $+$ denotes the sum of virtual Cartier divisors. Let us denote the induced morphisms $\Pb(\Nc_{X/U} \oplus \Oc) \hook M(X/U)$ and $\bl_X(U) \hook M(X/U)$ by $j^\infty_1$ and $j^\infty_2$ respectively.
\end{cons}

Since $\Pb^1 \times X$ does not meet $\bl_X(U)$ by Theorem \ref{BlowUpPropertiesThm}, we have the following result.

\begin{lem}\label{DeformLem}
Let everything be as above. Then 
$$i^! \circ  j^{0!} = s^! \circ j^{\infty!}_1 : \Omega^A_*\big(M(X/U)\big) \to \Omega^A_*(X)$$
where $s$ is the zero section $X \hook \Pb(\Nc_{X/U} \oplus \Oc)$.
\end{lem}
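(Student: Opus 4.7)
The plan is to reduce the identity to a projective bundle formula computation by chasing the two Cartesian squares of Construction \ref{DeformDiag1}. The bottom square together with the push-pull formula immediately gives $i^! \circ j^{0!} = j'^{0!} \circ \hat i^!$. For the top square, Theorem \ref{BlowUpPropertiesThm}(6) applied to the strict transforms of $\Pb^1 \times X$ and $\{\infty\} \times U$ inside $\Pb^1 \times U$ shows that $\Pb^1 \times X$ and $\bl_X(U)$ are disjoint inside $M(X/U)$; consequently $i^\infty$ factors through $\Pb(\Nc_{X/U} \oplus \Oc)$, and since $i^\infty$ is the restriction of $\hat i$ to the fiber over $\infty$, this factorization is precisely the zero section $s$ of the normal cone. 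Hence $j^\infty_1 \circ s = j^\infty \circ i^\infty$, and combining with the top square yields $s^! \circ j^{\infty!}_1 = j'^{\infty!} \circ \hat i^!$. It therefore suffices to prove $j'^{0!} = j'^{\infty!}$ as maps $\Omega^A_*(\Pb^1 \times X) \to \Omega^A_*(X)$.

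To establish this last identity I would invoke the projective bundle formula (Property 3 of $\Omega'^\bullet$): every class $\alpha \in \Omega^A_*(\Pb^1 \times X)$ admits a unique decomposition
$$\alpha = \pi^!(\alpha_0) + c_1\big(\pi^* \Oc(1)\big) \bullet \pi^!(\alpha_1)$$
with $\alpha_0, \alpha_1 \in \Omega^A_*(X)$, where $\pi: \Pb^1 \times X \to X$ is the projection. Since $j'^0$ and $j'^\infty$ are both sections of $\pi$, functoriality gives $(j'^\epsilon)^! \circ \pi^! = \mathrm{id}$ for $\epsilon \in \{0, \infty\}$, and the standard compatibility $\sigma^!(\gamma \bullet \beta) = \sigma^*(\gamma) \bullet \sigma^!(\beta)$ of Gysin pullback with the bivariant product (with $\gamma$ a cohomology class) then yields
$$(j'^\epsilon)^!(\alpha) = \alpha_0 + c_1\big((j'^\epsilon)^* \pi^* \Oc(1)\big) \bullet \alpha_1 = \alpha_0,$$
since $\pi \circ j'^\epsilon = \mathrm{id}_X$ factors through the point $\epsilon \in \Pb^1$, so $(j'^\epsilon)^* \pi^* \Oc(1) \cong \Oc_X$ has vanishing first Chern class. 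This value is independent of $\epsilon$, which completes the proof.

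The only non-formal step is the identification of $i^\infty$ with $\iota \circ s$, where $\iota: \Pb(\Nc_{X/U} \oplus \Oc) \hook \Pb(\Nc_{X/U} \oplus \Oc) + \bl_X(U)$ is the component inclusion; this rests on the strict-transform disjointness provided by Theorem \ref{BlowUpPropertiesThm}(6). Every other ingredient is a routine unwinding of the bivariant formalism and the projective bundle formula, so no serious obstacle is expected.
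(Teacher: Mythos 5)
Your proof is correct and follows essentially the same route as the paper: the paper's own argument simply observes that the two squares relating $(i, j^0, j'^0, \hat i)$ and $(s, j^\infty_1, j'^\infty, \hat i)$ commute up to homotopy, and you have spelled out the same reduction (disjointness of $\Pb^1 \times X$ from $\bl_X(U)$ identifying $i^\infty$ with the zero section, then functoriality of Gysin pullbacks) together with the step the paper leaves implicit, namely $j'^{0!} = j'^{\infty!}$, which you settle via the projective bundle formula. The only blemish is notational: the tautological line bundle in your decomposition of $\Omega^A_*(\Pb^1 \times X)$ is pulled back from the $\Pb^1$ factor, not along $\pi: \Pb^1 \times X \to X$, but the computation you then carry out (its restriction along either section is trivial) makes clear this is what you meant.
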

\begin{proof}
This follows immediately from the fact that the squares
$$
\begin{tikzcd}
X \arrow[hook]{r}{s} \arrow[hook]{d}{j'^\infty} & \Pb(\Nc_{X/U} \oplus \Oc) \arrow[hook]{d}{j^\infty_1} \\
\Pb^1 \times X \arrow[hook]{r}{\hat i} & M(X/U)
\end{tikzcd}
\text{ \ and \ }
\begin{tikzcd}
X \arrow[hook]{r}{i} \arrow[hook]{d}{j'^0} & U  \arrow[hook]{d}{j^0} \\
\Pb^1 \times X \arrow[hook]{r}{\hat i} & M(X/U)  
\end{tikzcd}
$$
commute up to homotopy.
\end{proof}

We will combine the above result with the following standard observation in  Section \ref{ProofOfSpivakSubSect}.

\begin{lem}\label{SectionPullbackLem}
Let $X$ be a quasi-projective derived $A$ scheme, let $\rho: \Pb(E \oplus \Oc) \to X$ be a projective bundle over $X$ with $r = \rank(E)$, and let $s: X \hook \Pb(E \oplus \Oc)$ be the zero section. Then 
$$s^!(-) = \rho_* \big(c_r(E(1)) \bullet - \big): \Omega^A_*\big(\Pb(E \oplus \Oc)\big) \to \Omega^A_{*-r}(X).$$
\end{lem}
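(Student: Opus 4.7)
I will realize the zero section $s$ as the derived vanishing locus of a canonical section of the vector bundle $E(1) := \rho^* E \otimes \Oc(1)$ on $\Pb(E \oplus \Oc)$, and then exploit the fact that $\rho$ is a left inverse of $s$. On $\Pb(E \oplus \Oc)$ the composition of the tautological inclusion $\Oc(-1) \hook \rho^*(E \oplus \Oc)$ with the projection onto the $\rho^* E$-factor produces a morphism $\Oc(-1) \to \rho^* E$, equivalently a global section $\sigma \in \Gamma\big(\Pb(E \oplus \Oc); E(1)\big)$. A pointwise inspection shows that $\sigma$ vanishes precisely along the image of $s$, so the derived zero locus of $\sigma$ is exactly the quasi-smooth closed embedding $s: X \hook \Pb(E \oplus \Oc)$, whose conormal is $s^* E(1)^\vee$.

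The second ingredient is the identity
$$s_*\big(1_{X / \Pb(E \oplus \Oc)}\big) = c_r\big(E(1)\big) \in \Omega^r\big(\Pb(E \oplus \Oc)\big),$$
i.e.\ the statement that the fundamental class of the derived vanishing locus of a section of $E(1)$ agrees with the top Chern class of $E(1)$. For $r = 1$ this is precisely the description of $c_1$ via derived vanishing loci of global sections recorded just before Definition \ref{VStackDefn}; the general case follows from the usual splitting-principle / projective-bundle-formula argument, using the expected formal properties of Chern classes of vector bundles in $\Omega^*$.

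Combining these two ingredients, the bivariant projection formula gives, for any $\beta \in \Omega^A_*\big(\Pb(E \oplus \Oc)\big)$,
$$s_*\big(s^!(\beta)\big) = s_*\big(1_{X / \Pb(E \oplus \Oc)} \bullet \beta\big) = s_*\big(1_{X / \Pb(E \oplus \Oc)}\big) \bullet \beta = c_r\big(E(1)\big) \bullet \beta.$$
Since $\rho \circ s = \mathrm{id}_X$, functoriality of pushforwards yields $\rho_* \circ s_* = \mathrm{id}$ on $\Omega^A_*(X)$, so applying $\rho_*$ to the identity above yields
$$s^!(\beta) = \rho_*\big(c_r(E(1)) \bullet \beta\big),$$
which is exactly the claim. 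No serious obstacle is anticipated; the only step requiring some care is the identification $s_*(1_{X / \Pb(E \oplus \Oc)}) = c_r(E(1))$ at the level of bivariant fundamental classes, which needs to be traced through the bivariant definition of higher Chern classes rather than merely read off from cohomology.
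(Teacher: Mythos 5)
Your proposal is correct and is essentially the paper's argument written out in full: the paper's proof is the single remark that $s$ is the derived vanishing locus of a global section of $E(1)$, and your identification of that section, the identity $s_*(1_{X/\Pb(E\oplus\Oc)}) = c_r(E(1))$, and the projection-formula-plus-$\rho\circ s = \mathrm{id}$ computation are exactly the details being left implicit. The one step you rightly flag as needing care (that the top Chern class equals the class of the derived vanishing locus of a section) is a known property of Chern classes in this theory from the cited foundational papers, so no gap remains.
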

\begin{proof}
This follows immediately from the fact that $s$ is the derived vanishing locus of a global section of $E(1)$.
\end{proof}

We will also need the following classical deformation diagrams. 

\begin{cons}[Classical deformation diagram of $\bar i_\cl$]\label{DeformDiag2}
Denoting the classical blow up of $\infty \times \overline X_\cl \hook \Pb^1 \times \Pb^n_A$ by $M^\cl(\overline X_\cl /\Pb^n_A)$ gives us the diagram
$$
\begin{tikzcd}
\overline X_\cl \arrow[hook]{r}{\bar i^\infty_\cl} \arrow[hook]{d}{\bar j'^\infty_\cl} & \Ec + \bl^\cl_{\overline X_\cl}(\Pb^n_A) \arrow[]{r} \arrow[hook]{d}{\bar j^\infty_\cl} & \infty \arrow[hook]{d} \\
\Pb^1 \times \overline X_\cl \arrow[hook]{r}{\hat i_\cl} & M^\cl(\overline X_\cl /\Pb^n_A) \arrow[]{r} & \Pb^1 \\
\overline X_\cl \arrow[hook]{r}{\bar i_\cl} \arrow[hook']{u}[swap]{\bar j'^0_\cl} & \Pb^n_A \arrow[]{r}{} \arrow[hook']{u}[swap]{\bar j^0_\cl} & 0 \arrow[hook']{u}
\end{tikzcd}
$$
where $\Ec$ denotes the exceptional divisor of the blow up. We will denote by $\bar j^\infty_{\cl,1}$ the induced morphism $\Ec \hook M^\cl(\overline X_\cl/\Pb^n_A)$.
\end{cons}

\begin{cons}[Classical deformation diagram of $i_\cl$]\label{DeformDiag3}
Restricting the deformation diagram of Construction \ref{DeformDiag2} to the open subscheme $U \subset \Pb^n_A$ gives rise to the diagram
$$
\begin{tikzcd}
 X_\cl \arrow[hook]{r}{i^\infty_\cl} \arrow[hook]{d}{j'^\infty_\cl} & \Ec^\circ + \bl^\cl_{X_\cl}(U) \arrow[]{r} \arrow[hook]{d}{j^\infty_\cl} & \infty \arrow[hook]{d} \\
\Pb^1 \times X_\cl \arrow[hook]{r}{\hat i_\cl^\circ} & M^\cl(X_\cl / U) \arrow[]{r} & \Pb^1 \\
X_\cl \arrow[hook]{r}{i_\cl} \arrow[hook']{u}[swap]{j'^0_\cl} & U \arrow[]{r} \arrow[hook']{u}[swap]{j^0_\cl} & 0 \arrow[hook']{u}
\end{tikzcd}
$$
The induced morphism $\Ec^\circ \hook M^\cl(X_\cl / U)$ is denoted by $j^\infty_{\cl,1}$.
\end{cons}

As a special case of Proposition \ref{BlowUpVsTruncationProp}, we obtain the following result.

\begin{lem}\label{DeformLem2}
The square
$$
\begin{tikzcd}
\Ec^\circ \arrow[hook]{d}{\iota'} \arrow[hook]{r}{j^\infty_{\cl, 1}} & M^\cl(X_\cl / U) \arrow[hook]{d}{\iota} \\
\Pb(\Nc_{X/U} \oplus \Oc) \arrow[hook]{r}{j^\infty_1} & M(X/U)
\end{tikzcd}
$$
is derived Cartesian. \qed
\end{lem}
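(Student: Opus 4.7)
The plan is to recognize the target square as an instance of the general compatibility between derived and classical blow ups recorded in Proposition \ref{BlowUpVsTruncationProp}, applied to the particular derived regular embedding
\[
\infty \times X \hook \Pb^1 \times U.
\]
Indeed, by Construction \ref{DeformDiag1}, $M(X/U)$ is by definition the derived blow up $\bl_{\infty \times X}(\Pb^1 \times U)$, and by Construction \ref{DeformDiag3} (together with the fact that $\Pb^1 \times U$ is already classical and $(\infty \times X)_\cl = \infty \times X_\cl$), $M^\cl(X_\cl / U)$ is the classical blow up $\bl^\cl_{\infty \times X_\cl}(\Pb^1 \times U)$.

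Next I would identify the two exceptional divisors in the target square with the ones appearing in Proposition \ref{BlowUpVsTruncationProp}. By Theorem \ref{BlowUpPropertiesThm}(4), the derived exceptional divisor is $\Pb(\Nc_{\infty \times X / \Pb^1 \times U})$, and since $\Nc_{\infty/\Pb^1} \cong \Oc$ the product structure gives a canonical splitting
\[
\Nc_{\infty \times X / \Pb^1 \times U} \cong \Nc_{X/U} \oplus \Oc,
\]
which matches the notation $\Pb(\Nc_{X/U} \oplus \Oc)$ used in Construction \ref{DeformDiag1}. Similarly, by construction, $\Ec^\circ$ is the classical exceptional divisor of $M^\cl(X_\cl / U)$, i.e., what Proposition \ref{BlowUpVsTruncationProp} calls $\Ec_\cl$.

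With these identifications in place, the square in the statement is exactly the derived Cartesian square furnished by Proposition \ref{BlowUpVsTruncationProp}, so the result follows. The only subtlety to check is that the closed embeddings $\iota$ and $\iota'$ in our diagram coincide with the natural embeddings provided by that proposition; but both are instances of the canonical inclusion of the classical truncation into the derived blow up (via the morphism $i_{Z/X}$ from \cite{annala-pre-and-cob} Appendix B.4), so there is no genuine obstacle here.
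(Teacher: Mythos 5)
Your proposal is correct and is exactly the paper's argument: the lemma is stated as an immediate special case of Proposition \ref{BlowUpVsTruncationProp} applied to the derived regular embedding $\infty \times X \hook \Pb^1 \times U$, with the identifications of the derived exceptional divisor with $\Pb(\Nc_{X/U} \oplus \Oc)$ and of $M^\cl(X_\cl/U)$ with the classical blow up left implicit. You spell out these identifications more carefully than the paper does, but the route is the same.
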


\subsection{Proof of the algebraic Spivak's theorem}\label{ProofOfSpivakSubSect}

In this section we prove the algebraic Spivak's theorem. For simplicity we will call elements of form
$$[V \to X] \in \Omega^A_*(X)$$
and
$$[V \to \Spec(A)] \in \Omega^*\big(\Spec(A)\big)$$
with $V$ a regular scheme \emph{regular cycles}. The algebraic Spivak's theorem will easily follow from the following result.

\begin{lem}\label{FundClassLem}
Let $X$ be a connected quasi-smooth and quasi-projective derived $A$-scheme of virtual $A$-dimension $d$. Then the fundamental class
$$1_{X/A} \in \Omega^A_d(X)[e^{-1}]$$
is equivalent to a $\Zb[e^{-1}]$-linear combination of elements of form 
$$(\alpha_1 \bullet \cdots \bullet \alpha_s) . \beta$$
with $\alpha_i \in \Omega^*\big( \Spec(A) \big)$ and $\beta \in \Omega^A_*(X)$ integral combinations of regular cycles. 
\end{lem}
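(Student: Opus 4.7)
The plan is to combine deformation to the normal cone with desingularization by alterations and the refined projective bundle formula (Corollary \ref{ThirdRefinedPBFCor}), inducting on the virtual $A$-dimension $d$ of $X$; the base case $d = 0$ is a direct verification. For the inductive step, choose a closed embedding $i\colon X \hookrightarrow U$ with $U$ open in some $\Pb^n_A$, and assemble the deformation diagrams of Section \ref{DeformDiagSubSect}. Since $i$ is quasi-smooth, $1_{X/A} = i^!(1_{U/A})$; by Lemma \ref{DeformLem} any class $\gamma \in \Omega^A_\ast(M(X/U))$ satisfies $i^! j^{0!}(\gamma) = s^! j_1^{\infty!}(\gamma)$, so everything reduces to computing both sides for a suitable $\gamma$ and rearranging.

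To introduce regular cycles, apply Temkin's theorem to $\overline M := M^\cl(\overline X_\cl / \Pb^n_A)$, producing a projective $\mathrm{char}(A)$-alteration $\pi\colon \tilde M \to \overline M$ of generic degree $e'$ with $\tilde M$ regular, arranged so that the preimage of the union of the fibres of $\overline M \to \Pb^1$ over $0$ and $\infty$ is a strict normal crossings divisor. Via Proposition \ref{BlowUpVsTruncationProp}, restriction to $\Pb^1 \times U$ yields a projective morphism $\tilde M_U \to M(X/U)$ with $\tilde M_U$ regular, giving the class $\gamma := [\tilde M_U \to M(X/U)] \in \Omega^A_\ast(M(X/U))$. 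By base change, $j^{0!}(\gamma) = [\tilde M_U|_0 \to U]$ is the restriction to $U$ of the snc scheme $\tilde M|_0$, which maps to $\Pb^n_A = \overline M|_0$ projectively, of relative virtual dimension $0$ and generically finite of degree $e'$. Corollary \ref{ThirdRefinedPBFCor} then yields
\[
[\tilde M|_0 \to \Pb^n_A] = e' \cdot 1_{\Pb^n_A/\Zb} + \sum_{i \geq 1} \pi_A^\ast(\alpha_i) \bullet c_1(\Oc(1))^i \bullet 1_{\Pb^n_A/\Zb}
\]
with $\alpha_i \in \Omega^{-i}(\Spec A)$ integral combinations of regular projective $A$-schemes and $\pi_A\colon \Pb^n_A \to \Spec A$. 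Pulling back along $i$ produces
\[
i^! j^{0!}(\gamma) = e' \cdot 1_{X/A} + \sum_{i \geq 1} (\alpha_i) \, . \, \big(c_1(\Oc(1)|_X)^i \bullet 1_{X/A}\big).
\]

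On the other hand, compute $s^! j_1^{\infty!}(\gamma)$ directly. By Lemma \ref{DeformLem2}, $j_1^{\infty!}(\gamma)$ is represented by the snc subdivisor $D_U^{\mathrm{exc}} \subset \tilde M_U$ lying above $\Ec^\circ$; expand it via Proposition \ref{SNCRelationsProp} inside the regular ambient $\tilde M_U$, and present the first Chern classes of the line bundles $\Oc(D_j)$ on the regular scheme $\tilde M_U$ as integral combinations of regular cycles via Proposition \ref{RegularChernPresentationProp}. Lemma \ref{SectionPullbackLem} then computes $s^!$ as $\rho_\ast(c_{n-d}(\Nc_{X/U}(1)) \bullet -)$; the projection formula pushes the top Chern class onto each regular stratum $V$ in the expansion, where $c_{n-d}(\Nc_{X/U}(1))|_V$ again lives on a regular ambient and can be expanded via Proposition \ref{RegularChernPresentationProp}. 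After pushing forward along $\rho$ to $X$, the composite $s^! j_1^{\infty!}(\gamma)$ becomes visibly an integral combination of regular cycles in $\Omega^A_\ast(X)$. Equating with the previous expression yields
\[
e' \cdot 1_{X/A} = \big(\text{combination of regular cycles in } \Omega^A_\ast(X)\big) - \sum_{i \geq 1} (\alpha_i) \, . \, \big(c_1(\Oc(1)|_X)^i \bullet 1_{X/A}\big).
\]

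To close the induction, each term $c_1(\Oc(1)|_X)^i \bullet 1_{X/A}$ equals $[Z_i \to X]$ for a quasi-smooth derived subscheme $Z_i \subseteq X$ of virtual $A$-dimension $d-i < d$, obtained as an iterated derived vanishing locus of sections of $\Oc(1)|_X$ (existence of enough sections is provided by Lemma \ref{ClassicalBertiniLem} applied to the truncation). By the inductive hypothesis, each $1_{Z_i/A}$ is a $\Zb[e^{-1}]$-combination of products $(\beta_1 \bullet \cdots \bullet \beta_t).\delta$ of the required form; pushing forward to $X$ preserves this form, and multiplying by $\alpha_i \in \Omega^\ast(\Spec A)$ merely extends the list of $\alpha$-factors. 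Dividing by $e' \in \Zb[e^{-1}]^\times$ completes the inductive step. The main obstacle is the $s^! j_1^{\infty!}$ computation: one must verify that every Chern class invoked sits on a regular ambient (namely $\tilde M_U$, $\Pb^n_A$, or a regular stratum of $D_U^{\mathrm{exc}}$) so that Proposition \ref{RegularChernPresentationProp} is applicable; the restriction to $\Zb[e^{-1}]$ is essential because alterations in positive or mixed characteristic inflate generic degrees by a $\mathrm{char}(A)$-number, which must be inverted to extract $1_{X/A}$.
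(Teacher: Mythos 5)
Your main computation -- deformation to the normal cone of $X_\cl\hook U$, desingularization of the classical deformation space by an $e$-alteration, Corollary \ref{ThirdRefinedPBFCor} on the $0$-fibre, and the snc relations plus Lemma \ref{SectionPullbackLem} on the $\infty$-fibre -- is exactly the paper's argument (Construction \ref{DesingAltOfDeformCons} together with Lemmas \ref{MainEqLem1} and \ref{MainEqLem2}). The gap is in how you close the recursion. You induct on the virtual $A$-dimension $d$ with base case $d=0$, replacing each term $c_1(\Oc(1)|_X)^i\bullet 1_{X/A}$ by $[Z_i\to X]$ with $Z_i$ a derived vanishing locus of virtual dimension $d-i$. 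But virtual dimension is unbounded below for quasi-smooth derived schemes: already in the first step the $Z_i$ range over virtual dimensions $d-1,\dots,d-n$, which are typically negative, and iterating produces nonempty quasi-smooth schemes of arbitrarily negative virtual dimension. Your induction therefore never terminates -- there is no base case it can reach -- and the statement for $d=0$ is in any case not a ``direct verification'' (a derived intersection of two curves on a surface is already a nontrivial virtual-dimension-$0$ instance of the lemma).

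What actually makes the recursion terminate is Krull dimension, and this is why the paper inducts on $\dim X_\cl$ and proves the auxiliary Lemma \ref{ChernDimLem}: using the coprime-powers trick of Lemma \ref{PowerChernLem} together with sections of $\Oc(p)|_X$ whose vanishing loci contain no irreducible component of $X_\cl$, one writes $c_1(\Oc_X(1))^i\bullet 1_{X/A}$ as an $\Lb$-linear combination of classes supported on derived subschemes of Krull dimension $\leq \dim X_\cl-1$. (Note that this step is also needed to justify your replacement of $c_1$ by a vanishing locus: Lemma \ref{ClassicalBertiniLem} only supplies good sections of high powers $\Ls^{\otimes n}$, so one must undo the power via Lemma \ref{PowerChernLem}, and the resulting correction series terminates precisely for Krull-dimension reasons.) Reorganize your final step as an induction on $\dim X_\cl$, with the empty scheme as base case, and the rest of your argument goes through.
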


Throughout this section, we will use the notation of Section \ref{DeformDiagSubSect}. Let us start with the following construction.

\begin{cons}\label{DesingAltOfDeformCons}
Choose a desingularization by an $e$-alteration
$$\pi: W \to M^\cl(\overline X_\cl / \Pb^n_A)$$
with $\Ec' := \pi^{-1}(\Ec)$ and $W_0 := \pi^{-1}(\Pb^n_A)$ snc divisors (here $\Pb^n_A$ is the copy of the projective space over $0$), and denote by
$$\pi^\circ: W^\circ \to M^\cl(X_\cl / U)$$
$\Ec'^\circ$ and $W_0^\circ$ the pullbacks of $\pi$, $\Ec'$ and $W'_0$ along $U \subset \Pb^n_A$. Note that since 
$$
\begin{tikzcd}
W^\circ_0 \arrow[hook]{r} \arrow[]{d} & W^\circ \arrow[]{d} \\
U \arrow[hook]{r} & M(X/U)
\end{tikzcd}
\ \ \  \text{and} \ \ \ 
\begin{tikzcd}
\Ec'^\circ \arrow[hook]{r} \arrow[]{d} & W^\circ \arrow[]{d} \\
\Pb(\Nc_{X/U} \oplus \Oc) \arrow[hook]{r} & M(X/U)
\end{tikzcd} 
$$
are derived Cartesian, it follows from Lemma \ref{DeformLem} that
\begin{equation}\label{MainEq}
i^!([W^\circ_0 \to U]) = s^!\big([\Ec'^\circ \to \Pb(\Nc_{X/U} \oplus \Oc)] \big)
\end{equation}
in $\Omega^A_*(X)$.
\end{cons}

We then have the following lemmas.

\begin{lem}\label{MainEqLem1}
We have that
$$[W^\circ_0 \to U] = e^m \cdot 1_{U/A} + \sum_{i=1}^n \alpha_i . \big(c_1(\Oc_U(1))^i \bullet 1_{U / A} \big) \in \Omega^A_*(U),$$
where $m$ is a non-negative integer, $\Oc_U(1)$ is the restriction of $\Oc(1)$ to $U$, and $\alpha_i \in \Omega^{-i}\big( \Spec(A) \big)$ are integral combinations of regular cycles. 
\end{lem}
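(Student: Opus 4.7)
The plan is to apply the refined projective bundle formula of Corollary \ref{ThirdRefinedPBFCor} to the compactified morphism $W_0 \to \Pb^n_A$, and then pull the resulting identity back to $U$ along the open immersion $j: U \hookrightarrow \Pb^n_A$. First I would check that the hypotheses of that corollary are satisfied by $W_0 \to \Pb^n_A$. By construction of the desingularization, $W_0 = \pi^{-1}(\Pb^n_A \times \{0\})$ is an snc divisor in the regular scheme $W$, hence an snc scheme in the sense of the conventions of the paper; and the map $W_0 \to \Pb^n_A$ is projective and of relative virtual dimension $0$. The key point is that it is generically finite of degree exactly $e^m := \deg(\pi)$. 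This follows because the blow up $M^\cl(\overline X_\cl / \Pb^n_A) \to \Pb^1 \times \Pb^n_A$ is an isomorphism over the fiber $\Pb^n_A \times \{0\}$, since the blown-up locus $\infty \times \overline X_\cl$ is entirely contained in the fiber over $\infty$. Hence the generic degree of $W_0 \to \Pb^n_A$ (counted with multiplicities, as in the formulation of Corollary \ref{ThirdRefinedPBFCor}) agrees with the generic degree of the alteration $\pi$ itself.

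Having verified the hypotheses, Corollary \ref{ThirdRefinedPBFCor} will yield
$$[W_0 \to \Pb^n_A] = e^m \bullet 1_{\Pb^n_A / \Zb} + \sum_{i=1}^n \pi_A^*(\alpha_i) \bullet c_1(\Oc(1))^i \bullet 1_{\Pb^n_A / \Zb} \in \Omega_\bullet(\Pb^n_A)[e^{-1}],$$
with $\pi_A$ denoting the structure map $\Pb^n_A \to \Spec(A)$ and $\alpha_i \in \Omega^{-i}\big(\Spec(A)\big)$ integral combinations of regular projective $A$-schemes. Using the Poincaré duality isomorphism $- \bullet 1_{A/\Zb}: \Omega^A_*(\Pb^n_A) \xrightarrow{\cong} \Omega_\bullet(\Pb^n_A)$ from Proposition \ref{BordismVsYBordismProp}, together with the definition $\alpha . \beta = \pi_A^*(\alpha) \bullet \beta$ of the $\Omega^*\big(\Spec(A)\big)$-module action, this translates into the identity
$$[W_0 \to \Pb^n_A] = e^m \cdot 1_{\Pb^n_A / A} + \sum_{i=1}^n \alpha_i . \big( c_1(\Oc(1))^i \bullet 1_{\Pb^n_A / A} \big) \in \Omega^A_*(\Pb^n_A)[e^{-1}].$$

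Finally, I would apply $j^!$ to this identity. Since $W^\circ_0$ is by definition the derived pullback $W_0 \times_{\Pb^n_A} U$, the left hand side becomes $[W^\circ_0 \to U]$. The pullback $j^!$ sends $1_{\Pb^n_A / A}$ to $1_{U/A}$ and $c_1\bigl(\Oc(1)\bigr)$ to $c_1\bigl(\Oc_U(1)\bigr)$, and because $\pi_U = \pi_A \circ j$ it commutes with the $\Omega^*\big(\Spec(A)\big)$-module action. Applying $j^!$ termwise then produces the stated formula. The only genuine obstacle in this argument is the degree calculation in the first paragraph; everything else is routine manipulation within the bivariant formalism.
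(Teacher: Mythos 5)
Your proposal takes exactly the route of the paper: verify the hypotheses of Corollary \ref{ThirdRefinedPBFCor} for $W_0 \to \Pb^n_A$, apply it, translate through the duality isomorphism into $\Omega^A_*$, and restrict along $U \subset \Pb^n_A$. All of the routine parts (the snc hypothesis, the relative virtual dimension, the compatibility of $j^!$ with $c_1$ and with the $\Omega^*(\Spec(A))$-module action) are fine. The one step you correctly single out as the crux --- that $W_0 \to \Pb^n_A$ is generically finite of degree $e^m = \deg(\pi)$ in the multiplicity-weighted sense --- is, however, not actually established by the reason you give.

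That the blow up $M^\cl(\overline X_\cl/\Pb^n_A) \to \Pb^1 \times \Pb^n_A$ is an isomorphism over $\{0\}\times\Pb^n_A$ only tells you that the copy of $\Pb^n_A$ over $0$ is a prime Cartier divisor contained in the regular locus of $M^\cl(\overline X_\cl/\Pb^n_A)$; it says nothing by itself about how the alteration $\pi$ behaves over that divisor. An alteration is not flat, its fibre over the divisor can acquire components of excess dimension or components not dominating $\Pb^n_A$, and a priori the multiplicities $n_i$ and degrees of the dominant components of $W_0$ need not sum to $\deg(\pi)$. What closes the gap is a conservation-of-degree argument, which is what the paper's proof supplies: let $\eta$ be the generic point of the divisor $\Pb^n_A \subset M^\cl(\overline X_\cl/\Pb^n_A)$. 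Your observation guarantees that $\Oc_{M^\cl,\eta}$ is a discrete valuation ring; since $W$ is integral and dominates $M^\cl(\overline X_\cl/\Pb^n_A)$, the base change of $\pi$ to $\Spec(\Oc_{M^\cl,\eta})$ is a surjection from an integral scheme, hence flat, and flatness over a discrete valuation ring forces the degree of the closed fibre --- which is precisely the quantity $\sum_i n_i \deg(W_{0,i}/\Pb^n_A)$ entering Corollary \ref{ThirdRefinedPBFCor} --- to agree with the degree $e^m$ of the generic fibre. So keep your observation about the blow up (it is the source of the needed regularity at $\eta$), but you must add the flatness-over-a-DVR step to pass from $\deg(\pi)=e^m$ to $\deg(W_0/\Pb^n_A)=e^m$.
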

\begin{proof}
By construction $\pi: W \to M^\cl(\overline X_\cl/\Pb^n_A)$ is surjective and generically finite of degree $e^m$ for some $m \in \Zb$ non-negative (and therefore also of relative virtual dimension 0 over the regular locus of $M^\cl(\overline X_\cl/\Pb^n_A)$). As $M^\cl(\overline X_\cl/\Pb^n_A)$ is regular away from the fibre over $\infty$, as the inclusion $\Pb^n_A \hook M^\cl(\overline X_\cl/\Pb^n_A)$ of the fibre over 0 is a prime divisor, and as a surjective morphism from an integral scheme to the spectrum of a discrete valuation ring is necessarily flat, it follows that also the morphism $W_0 \to \Pb^n_A$ is generically finite and of degree $e^m$. As $W_0$ is an snc scheme, the claim follows from Corollary \ref{ThirdRefinedPBFCor} and restricting along the inclusion $U \subset \Pb^n_A$.
\end{proof}

Moreover, the other term of equation (\ref{MainEq}) is of the correct form by the following result. 

\begin{lem}\label{MainEqLem2}
We have that 
$$s^!\big([\Ec'^\circ \to \Pb(\Nc_{X/U} \oplus \Oc)] \big) \in \Omega^A_*(X)$$
is an integral combination of regular cycles. 
\end{lem}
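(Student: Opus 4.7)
The strategy is to combine the snc-relations (Proposition~\ref{SNCRelationsProp}) with the Chern-class presentability result over regular bases (Proposition~\ref{RegularChernPresentationProp}). By construction of $\pi: W \to M^\cl(\overline X_\cl/\Pb^n_A)$ as a desingularizing alteration with $\Ec' = \pi^{-1}(\Ec)$ an snc divisor, the open part $\Ec'^\circ \subset W^\circ$ is an snc divisor inside the regular ambient scheme $W^\circ$. Write $\Ec'^\circ \simeq n_1 D_1 + \cdots + n_r D_r$ as a sum of its prime components, so that for every $\emptyset \neq I \subset [r]$ the derived intersection $D_I := \bigcap_{i \in I} D_i$ is a regular quasi-projective $A$-scheme (as a closed subscheme of $W^\circ$, which is quasi-projective over $A$). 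Proposition~\ref{SNCRelationsProp} then yields a decomposition
$$1_{\Ec'^\circ/\Zb} = \sum_{\emptyset \neq I \subset [r]} (\iota^I)_*\bigl(F_I \bullet 1_{D_I/\Zb}\bigr) \in \Omega_\bullet(\Ec'^\circ),$$
where each $F_I \in \Omega^*(D_I)$ is a polynomial in the first Chern classes of the restricted line bundles $\Oc(D_j)|_{D_I}$.

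Pushing this identity forward along the projective morphism $\tilde f: \Ec'^\circ \to \Pb(\Nc_{X/U} \oplus \Oc)$ expresses $[\Ec'^\circ \to \Pb(\Nc_{X/U} \oplus \Oc)]$ as a sum of pushforwards $(f^I)_*\bigl(F_I \bullet 1_{D_I/\Zb}\bigr)$, where $f^I := \tilde f \circ \iota^I$. Applying Lemma~\ref{SectionPullbackLem} identifies $s^!$ with $\rho_*\bigl(c_r(\Nc_{X/U}(1)) \bullet -\bigr)$, and the bivariant projection formula allows us to move $c_r(\Nc_{X/U}(1))$ inside each pushforward. This reduces the task to showing that every class of the form
$$(\rho \circ f^I)_*\bigl((f^I)^* c_r(\Nc_{X/U}(1)) \bullet F_I \bullet 1_{D_I/\Zb}\bigr) \in \Omega_\bullet(X)$$
is an integral combination of regular cycles.

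For this, observe that on the regular quasi-projective $A$-scheme $D_I$ the class $(f^I)^* c_r(\Nc_{X/U}(1)) \bullet F_I \in \Omega^*(D_I)$ is a polynomial in Chern classes of vector bundles. Proposition~\ref{RegularChernPresentationProp} therefore expresses it as an integral combination of cycles $[V \to D_I]$ with $V$ regular. Capping each such cycle with $1_{D_I/\Zb}$ via the Poincaré-duality isomorphism on the regular scheme $D_I$ produces the class $[V \to D_I] \in \Omega_\bullet(D_I)$, and pushing forward along the projective morphism $\rho \circ f^I: D_I \to X$ yields the regular cycle $[V \to X] \in \Omega_\bullet(X)$. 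Transporting back through $- \bullet 1_{A/\Zb}: \Omega^A_*(X) \xrightarrow{\cong} \Omega_\bullet(X)$ completes the proof. No step presents a serious obstacle; the main care required is verifying that the snc decomposition takes place inside a regular scheme, which is immediate from the construction of the alteration $\pi$, and that the pieces $D_I$ are quasi-projective over $A$, which follows because $W$ is projective over $\Pb^1_A \times \Pb^n_A$.
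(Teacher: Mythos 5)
Your proposal is correct and follows essentially the same route as the paper: apply Lemma~\ref{SectionPullbackLem} to rewrite $s^!$ as $\rho_*\big(c_r(\Nc_{X/U}(1)) \bullet -\big)$, decompose $1_{\Ec'^\circ/A}$ via the snc relations over the regular strata $\Ec'^\circ_I$, move the Chern classes inside the pushforwards by the projection formula, and invoke Proposition~\ref{RegularChernPresentationProp} on each regular stratum before pushing forward to $X$. The only differences are cosmetic (the order in which you apply the section-pullback lemma and the snc relations, and the explicit passage through Poincaré duality on $D_I$).
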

\begin{proof}
By Lemma \ref{SectionPullbackLem} we have that
$$s^!\big([\Ec'^\circ \to \Pb(\Nc_{X/U} \oplus \Oc)] \big) = \rho_*\big(c_r(\Nc_{X/U}(1)) \bullet [\Ec'^\circ \to \Pb(\Nc_{X/U} \oplus \Oc)] \big) \in \Omega^A_*(X),$$
where $\rho$ is the canonical projection $\Pb(\Nc_{X/U} \oplus \Oc) \to X$. Moreover, denoting by $\Ec'^\circ_1,...,\Ec'^\circ_r$ the prime components of $\Ec'^\circ$, letting $n_1,...,n_r > 0$ be such that
$$\Ec'^\circ = n_1 \Ec'^\circ_1 + \cdots + n_r \Ec'^\circ_r$$
as effective Cartier divisors on $W^\circ$, and denoting for each $I \subset [r]$ the natural inclusion $\Ec'^\circ_I := \bigcap_{i \in I} \Ec'^\circ_i \hook \Ec'^\circ$, the snc relations imply that
\begin{align*}
c_r(\Nc_{X/U}(1)) \bullet 1_{\Ec'^\circ / A} &= c_r(\Nc_{X/U}(1)) \bullet \sum_{I \subset [r]} \iota^I_*\Big(F_I^{n_1,...,n_r}\big(c_1(\Oc(D_1)), ..., c_1(\Oc(D_r))\big) \bullet 1_{\Ec'^\circ_I / A} \Big) \\
&= \sum_{I \subset [r]} \iota^I_*\Big(c_r(\Nc_{X/U}(1)) \bullet F_I^{n_1,...,n_r}\big(c_1(\Oc(D_1)), ..., c_1(\Oc(D_r))\big) \bullet 1_{\Ec'^\circ_I / A} \Big)
\end{align*}
in $\Omega^A_*(\Ec'^\circ)$, so the claim follows from Proposition \ref{RegularChernPresentationProp} and pushing forward to $X$.
\end{proof}

We are now ready to prove the main lemma needed for the proof of algebraic Spivak's theorem.

\begin{proof}[Proof of Lemma \ref{FundClassLem}]
We will proceed by induction on the Krull dimension of $X$, the base case of an empty scheme being trivial. Suppose that $X$ has Krull dimension $d$ and the result has been proven for derived quasi-projective and quasi-smooth derived $A$ schemes of Krull dimension at most $d-1$. Denoting by $\pi_X$ the structure morphism $X \to \Spec(A)$ and by $\Oc_X(1)$ the restriction of $\Oc(1)$ to $X$, and combining equation (\ref{MainEq}) with Lemmas \ref{MainEqLem1} and \ref{MainEqLem2}, we see that
$$1_{X/A} = {\beta - \sum_{i=1}^n \alpha_i . \big( c_1(\Oc_X(1))^i \bullet 1_{X / A} \big) \over e^m} \in \Omega^A_*(X)[e^{-1}]$$ 
where $\beta \in \Omega^A_*(X)[e^{-1}]$ and $\alpha_i \in \Omega^{-i}\big(\Spec(A)\big)$ are integral combinations  of regular cycles. By Lemma \ref{ChernDimLem} the classes 
$$c_1(\Oc_X(1))^i \bullet 1_{X / A}$$
for $i > 0$ are equivalent to an $\Lb$-linear combination of pushforwards of fundamental classes of derived schemes of Krull dimension at most $d-1$, and therefore these classes can be dealt with inductively. We can therefore express $1_{X / A}$ in the desired form, proving the claim. 
\end{proof}

We used the following lemma in the above proof.

\begin{lem}\label{ChernDimLem}
Let $X$ be a quasi-projective derived $A$-scheme of Krull dimension $d$, and let $\Ls$ be a line bundle on $X$. Then 
$$c_1(\Ls)^i = \sum_{j=1}^N a_j [V_j \hook X] \in \Omega^i(X),$$
where $a_j \in \Lb$ and $V_j$ are derived schemes of Krull dimension at most $d-1$.
\end{lem}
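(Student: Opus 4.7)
The plan is to reduce to the case where $\Ls$ is ample by the formal group law, and then handle the ample case by combining classical Bertini (Lemma \ref{ClassicalBertiniLem}) with the presentation of $c_1(\Ls)$ from Lemma \ref{PowerChernLem}. Throughout we focus on $i \geq 1$, where the claim is nontrivial.

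For $\Ls$ ample, I would first use Lemma \ref{ClassicalBertiniLem} to produce, for every sufficiently large integer $n$, a global section $s_n \in \Gamma(X; \Ls^{\otimes n})$ whose restriction to $X_\cl$ is not a zerodivisor; its derived vanishing locus $i_n \colon Z_n \hook X$ then coincides with the classical one, so $Z_n$ is a classical closed subscheme of Krull dimension at most $d-1$ satisfying $c_1(\Ls^{\otimes n}) = [Z_n \hook X]$. Picking two such integers that are coprime (say $p$ and $q$) together with $a, b \in \Zb$ satisfying $ap + bq = 1$, Lemma \ref{PowerChernLem} gives
$$c_1(\Ls) = \bigl(a\,[Z_p \hook X] + b\,[Z_q \hook X]\bigr) \bullet u, \qquad u = 1_X + \sum_{k \geq 1} b_k\, c_1(\Ls)^k,$$
so $c_1(\Ls)^i = c_1(\Ls)^{i-1} \bullet \bigl(a[Z_p \hook X] + b[Z_q \hook X]\bigr) \bullet u$. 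The projection formula will absorb the factors $c_1(\Ls)^{i-1}$ and $u$ into pullbacks along $i_p$ or $i_q$, exhibiting $c_1(\Ls)^i$ as an $\Lb$-linear combination of cycles of the form $[V \to X]$ that factor through $Z_p \hook X$ or $Z_q \hook X$; in particular, such $V$ have Krull dimension at most $d-1$.

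To pass to a general $\Ls$, I would use the quasi-projectivity of $X$ to write $\Ls \cong \Ls_1 \otimes \Ls_2^\vee$ with $\Ls_1, \Ls_2$ very ample. The formal group law, together with the antipode turning $c_1(\Ls_2^\vee)$ into a power series in $c_1(\Ls_2)$ with no constant term, then yields
$$c_1(\Ls)^i = \sum_{j+k \geq i} c_{jk}\, c_1(\Ls_1)^j \bullet c_1(\Ls_2)^k, \qquad c_{jk} \in \Lb.$$
Since $i \geq 1$, every summand contains a positive power of $c_1(\Ls_\ell)$ for some $\ell \in \{1,2\}$; applying the ample case to that factor and absorbing the remaining Chern classes by the projection formula finishes the argument.

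The one potential pitfall is the apparent circularity in the ample case, where the ``unit'' $u$ in the expression for $c_1(\Ls)$ itself involves $c_1(\Ls)$. This is harmless, because by the projection formula, multiplying a pushforward from $Z_p$ by $u$ is the same as multiplying its source class by $i_p^* u \in \Omega^*(Z_p)$, an honest element of the cobordism ring of the smaller-dimensional scheme $Z_p$, so no infinite recursion occurs. Apart from this small subtlety, the argument is a routine combination of the stated background results.
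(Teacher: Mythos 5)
Your argument is correct and matches the paper's own proof in all essentials: both reduce the general case to (very) ample $\Ls$ via $\Ls \cong \Ls_1 \otimes \Ls_2^\vee$ and the formal group law, and both handle the ample case by taking non-zerodivisor sections of coprime powers $\Ls^{\otimes p}, \Ls^{\otimes q}$ (Lemma \ref{ClassicalBertiniLem}) and feeding them into Lemma \ref{PowerChernLem}, with the extra powers of $c_1(\Ls)$ and the unit $u$ absorbed onto the lower-dimensional vanishing loci by the projection formula. The only cosmetic difference is that the paper reduces to $i=1$ at the outset rather than at the end, and note that for a genuinely derived $X$ the vanishing locus $Z_n$ need not be classical — but since Krull dimension is defined via the truncation, your dimension count goes through unchanged.
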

\begin{proof}
It is clearly enough to prove the claim for $i=1$, because the extra copies of $c_1(\Ls)$ can be evaluated as derived vanishing loci of zero sections. Suppose first that $\Ls$ is a very ample line bundle. Then for all $n \gg 0$ the line bundle $\Ls^{\otimes n}$ has a global section $s$ whose vanishing locus does not contain any of the irreducible components of $X_\cl$. In particular we can find coprime integers $p$ and $q$ and global sections $s_p \in \Gamma(X;\Ls^{\otimes p})$ and $s_q \in \Gamma(X; \Ls^{\otimes q})$ whose derived vanishing loci $[Z_1 \hook X]$ and $[Z_2 \hook X]$ have Krull dimension at most $d-1$, and therefore Lemma \ref{PowerChernLem} implies that
$$c_1(\Ls) = \Bigg(1_X + \sum_{i=1}^\infty b_i c_1(\Ls)^i \Bigg) \bullet \big(a [Z_1 \hook X] + b [Z_2 \hook X] \big) \in \Omega^1(X)$$
for $a$ and $b$ integers and $b_i \in \Lb$. Hence the claim follows for $\Ls$ very ample. As a general line bundle $\Ls$ is equivalent to $\Ls_1 \otimes \Ls_2^\vee$ with $\Ls_1$ and $\Ls_2$ very ample, the general case follows using the formal group law.
\end{proof}

It is now easy to prove the algebraic Spivak's theorem.

\begin{thm}[Algebraic Spivak's theorem]\label{GeneralSpivakThm}
Let $A$ be a field or an excellent Henselian discrete valuation ring with a perfect residue field $\kappa$, and let $e$ be the residual characteristic exponent of $A$. Then the algebraic cobordism ring $\Omega^*\big(\Spec(A)\big)[e^{-1}]$ is generated as a $\Zb[e^{-1}]$-algebra by regular projective $A$-schemes. Moreover, for all quasi-projective derived $A$-schemes $X$, $\Omega_\bullet(X)[e^{-1}]$ is generated as an $\Omega^*\big(\Spec(A)\big)[e^{-1}]$-module by classes of regular schemes mapping projectively to $X$.
\end{thm}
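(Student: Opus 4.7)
The plan is to deduce both statements from Lemma \ref{FundClassLem} essentially by projective pushforward. Since $A$ is regular, Proposition \ref{BordismVsYBordismProp} identifies $\Omega_\bullet(X)$ with the $A$-bordism group $\Omega^A_*(X)$ as $\Omega^*(\Spec(A))$-modules, so I can work throughout with $\Omega^A_*$. A generator of $\Omega^A_*(X)$ is a cycle $[V \xrightarrow{f} X]$ with $f$ projective and $V$ quasi-smooth over $A$, and this class is tautologically $f_*(1_{V/A})$.

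For the module-generation claim, I would apply Lemma \ref{FundClassLem} to $V$ (working component by component to assume $V$ connected) to write
$$1_{V/A} \;=\; \sum_i n_i \bigl(\alpha_{i,1} \bullet \cdots \bullet \alpha_{i,s_i}\bigr).\beta_i \in \Omega^A_*(V)[e^{-1}]$$
with each $\alpha_{i,j}$ a regular cycle in $\Omega^*(\Spec(A))$ and each $\beta_i$ an integral combination of regular cycles in $\Omega^A_*(V)$. Pushing forward along $f$ and invoking the projection formula, which shows that the $\Omega^*(\Spec(A))$-action via ``$.$'' commutes with pushforwards along $A$-morphisms, gives
$$[V \to X] \;=\; \sum_i n_i \bigl(\alpha_{i,1} \bullet \cdots \bullet \alpha_{i,s_i}\bigr).f_*(\beta_i),$$
and $f_*(\beta_i)$ is an integral combination of cycles $[W \to X]$ with $W$ regular and projective over $X$ (equivalently over $A$). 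This exhibits $[V \to X]$ as an element of the $\Omega^*(\Spec(A))[e^{-1}]$-submodule generated by such regular cycles.

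To obtain the ring-generation claim for $\Omega^*(\Spec(A))[e^{-1}]$, I would run the same argument with $X = \Spec(A)$. Applying Lemma \ref{FundClassLem} to an arbitrary quasi-smooth projective $A$-scheme $V$ and pushing the resulting expression for $1_{V/A}$ forward along the structure morphism $\pi_V \colon V \to \Spec(A)$, the ``$.$''-action by $\alpha_1 \bullet \cdots \bullet \alpha_s$ becomes honest multiplication in the ring $\Omega^*(\Spec(A))$ after pushforward, yielding
$$[V \to \Spec(A)] \;=\; \sum_i n_i \,\alpha_{i,1} \bullet \cdots \bullet \alpha_{i,s_i} \bullet \pi_{V*}(\beta_i),$$
where $\pi_{V*}(\beta_i)$ is an integral combination of classes of regular projective $A$-schemes. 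Since the $\alpha_{i,j}$ are themselves regular projective $A$-cycles, this writes every generator of $\Omega^*(\Spec(A))[e^{-1}]$ as a $\Zb[e^{-1}]$-polynomial in classes of regular projective $A$-schemes.

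All of the real content sits inside Lemma \ref{FundClassLem} itself, where Bertini-regularity, desingularization by $e$-alterations, the refined projective bundle formula of Corollary \ref{ThirdRefinedPBFCor}, and the inductive step on Krull dimension via Lemma \ref{ChernDimLem} all intervene; the deduction of the theorem from the lemma is a formal consequence of the bivariant projection formula. The only point to watch is that pushforward preserves ``regular cycle'' in the required sense, which is automatic since the class $[W \to V]$ pushes forward to $[W \to X]$ with the same (regular) source $W$.
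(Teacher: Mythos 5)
Your deduction is correct and is essentially identical to the paper's own proof: both reduce to Lemma \ref{FundClassLem} by writing a generator $[V\xrightarrow{f}X]$ as $f_*(1_{V/A})$ (under the identification $\Omega^A_*(X)\cong\Omega_\bullet(X)$ of Proposition \ref{BordismVsYBordismProp}) and pushing the resulting expression forward via the projection formula. The ring-generation statement is likewise the special case $X=\Spec(A)$, exactly as you say.
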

\begin{proof}
Suppose $X$ is a quasi-projective derived $A$-scheme. Then $\Omega_\bullet(X)[e^{-1}]$ is by definition generated as a $\Zb[e^{-1}]$-module by cycles of form 
$$[V \stackrel f \to  X] = f_*(1_{V/\Zb})$$
with $f$ projective and $V$ a derived complete intersection scheme, and the claim follows from Lemma \ref{FundClassLem} under the natural isomorphism
$- \bullet 1_{A / \Zb}: \Omega^A_*(X) \stackrel{\cong}{\to} \Omega_\bullet(X)$.
\end{proof}

This result has several immediate corollaries. Let us start with the following.

\begin{cor}\label{StrongerSpivakCor}
Suppose $A$ is as in Theorem \ref{GeneralSpivakThm}. Then, for all quasi-projective derived $A$-schemes $X$, $\Omega_\bullet(X)[e^{-1}]$ is generated as a $\Zb[e^{-1}]$-module by cycles of form
$$[V \to X]$$
with $V$ a classical complete intersection scheme and the structure morphism $V \to \Spec(A)$ is either flat or factors through the unique closed point of $\Spec(A)$.

\end{cor}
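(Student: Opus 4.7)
The plan is to take the generating set supplied by Theorem \ref{GeneralSpivakThm} and rewrite each generator as a combination of cycles of the required classical form. That theorem gives $\Omega_\bullet(X)[e^{-1}]$ as the $\Zb[e^{-1}]$-module spanned by terms $\alpha . [V \to X]$ where $V$ is a regular scheme mapping projectively to $X$ and $\alpha = [W_1 \to \Spec(A)] \bullet \cdots \bullet [W_k \to \Spec(A)]$ is a product of classes of regular projective $A$-schemes. Unpacking the explicit formula for the module action recorded at the start of Section \ref{AlgebraicSpivakSect}, such a term equals $[V \times^R_A W_1 \times^R_A \cdots \times^R_A W_k \to X]$. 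I would first reduce to the case in which each of $V, W_1, \ldots, W_k$ is connected and hence integral (a regular connected Noetherian scheme is integral), so that each factor is either flat over $A$, its structure morphism being dominant, or factors through the unique closed point $\Spec(\kappa) \hookrightarrow \Spec(A)$.

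The main conceptual content is a vanishing result: if two or more of $V, W_1, \ldots, W_k$ factor through $\Spec(\kappa)$, then $\alpha . [V \to X] = 0$. The point is that the virtual Cartier divisor $i : \Spec(\kappa) \hookrightarrow \Spec(A)$ has trivial normal bundle, since $\Spec(\kappa)$ is the derived zero locus of a uniformizer of $A$ (a section of the trivial line bundle $\Oc_{\Spec(A)}$); by derived base change, the same holds of $i' : X_\kappa \hookrightarrow X$. Writing $[W_r \to \Spec(A)] = i_!([W_r \to \Spec(\kappa)])$ for a $\kappa$-scheme factor, and analogously $[V \to X] = i'_*([V \to X_\kappa])$ when $V$ itself is a $\kappa$-scheme, and invoking the base change $\pi_X^* \circ i_! = i'_! \circ \pi_{X_\kappa}^*$, the self-intersection formula $i^* \circ i_! = c_1(\Nc_i) \bullet (-) = 0$ and its analogue for $i'$, together with the projection formula, then force any product with at least two $\kappa$-supported factors to vanish.

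It then remains to treat the surviving terms, in which at most one factor is a $\kappa$-scheme. If every factor is flat over $A$, all derived tensor products over $A$ collapse to ordinary tensor products, and $V \times_A W_1 \times_A \cdots \times_A W_k$ is classical, flat over $A$, and lci: the lci property holds because the derived fibre product of derived complete intersection schemes over the regular base $\Spec(A)$ is itself a derived complete intersection, and is a classical complete intersection scheme once it is classical. If exactly one factor, say $V$ (the remaining cases being symmetric), is a $\kappa$-scheme, then using associativity and the fact that $W_i \times^R_A \Spec(\kappa) = (W_i)_\kappa$ is classical whenever $W_i$ is flat over $A$, the product rewrites as $V \times^R_\kappa (W_1)_\kappa \times^R_\kappa \cdots \times^R_\kappa (W_k)_\kappa$; over the field $\kappa$ all derived tensors are classical, so the result is a classical lci $\kappa$-scheme whose structure morphism to $\Spec(A)$ factors through the closed point.

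The part requiring most care is the vanishing step, where one must keep close track of where each class lives (bordism on $X$ versus cohomology on $X$ or $X_\kappa$) and which version of the projection formula and push-pull formula is being invoked, in order to reduce everything to the triviality of $c_1(\Nc_{\Spec(\kappa)/\Spec(A)})$. Once this is pinned down the remaining analysis is essentially formal, relying on the standard facts that the derived fibre product of lci derived schemes over a regular base remains lci, and that derived tensors collapse to classical ones as soon as every factor is flat over the base or the base is itself a field.
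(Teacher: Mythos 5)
Your proposal is correct and follows essentially the same route as the paper: unpack the generators of Theorem \ref{GeneralSpivakThm} as iterated derived fibre products over $A$, reduce to integral (hence flat-or-$\kappa$-supported) factors, kill any term with two or more $\kappa$-supported factors, and observe that the surviving products are classical complete intersections. Your vanishing step via the self-intersection formula $i^!\circ i_! = c_1(\Nc_i)\bullet(-)$ with $\Nc_i\cong\Oc$ is the same computation the paper performs directly, by identifying $V\times^R_{\Spec(A)}W$ with the derived vanishing locus of the zero section of the trivial line bundle on $V\times_{\Spec(\kappa)}W$ and concluding $f_*\big(c_1(\Oc)\bullet 1\big)=0$.
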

\begin{proof}
Let $X$ be a quasi-projective derived $A$-scheme. Then, by Theorem \ref{GeneralSpivakThm}, the algebraic bordism group $\Omega_\bullet(X)$ is generated by $\Zb[e^{-1}]$ linear combinations of elements of form
\begin{align*}
&([V_1 \to \Spec(A)] \bullet \cdots \bullet [V_1 \to \Spec(A)]) . [W \to X] \\
=&   [V_1 \times^R_{\Spec(A)} \cdots \times^R_{\Spec(A)} V_r \times^R_{\Spec(A)} W \to X],
\end{align*}
where $V_i$ and $W$ are regular. If $A$ is a field, then the derived fibre product is classical, proving the claim for fields. 

We are left with proving the first claim for discrete valuation rings. We first observe that since $V_i$ and $W$ are integral, each of them is either flat over $A$ or factors through the residue field $\kappa$ of $A$. If all or all but one of the $V_1,...,V_r,W$ are flat over $A$, the derived fibre product is classical, so we are left with the case where at least two of them factor through $\kappa$. Without loss of generality, we can assume them to be $V = V_1$ and $W$. But then one computes that the derived fibre product $V \times^R_{\Spec(A)} W$ is the derived vanishing locus of the zero section of the trivial line bundle in $V \times^R_{\Spec(\kappa)} W \simeq V \times_{\Spec(\kappa)} W$
and the claim follows from the fact that
\begin{align*}
[V \times^R_{\Spec(A)} W \to X] &= f_* \big(c_1(\Oc) \bullet 1_{V \times_{\Spec(\kappa)} W / \Zb} \big) \\
&= 0 \in \Omega_\bullet(X)
\end{align*}
where $f$ is the induced morphism $V \times_{\Spec(\kappa)} W \to X$.
\end{proof}

\begin{cor}\label{PerfectSpivakCor}
If $k$ is a perfect field, then the $e$-inverted bordism groups $\Omega_\bullet(X)[e^{-1}]$, where $X$ is a quasi-projective derived $k$-scheme, are generated as $\Zb[e^{-1}]$-modules by classes of smooth $k$-varieties mapping projectively to $X$.
\end{cor}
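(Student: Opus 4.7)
The plan is to deduce this directly from Theorem~\ref{GeneralSpivakThm} by exploiting two specific features of the perfect-field case; once the algebraic Spivak's theorem is available, essentially no additional work is required.

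By Theorem~\ref{GeneralSpivakThm} applied to $A = k$, the group $\Omega_\bullet(X)[e^{-1}]$ is generated as a $\Zb[e^{-1}]$-module by cycles of the form
\[
([V_1 \to \Spec(k)] \bullet \cdots \bullet [V_r \to \Spec(k)]) \; . \; [W \to X]
= [V_1 \times^R_k \cdots \times^R_k V_r \times^R_k W \to X],
\]
with $V_1,\ldots,V_r$ regular projective $k$-schemes and $W\to X$ a projective morphism from a regular scheme $W$. Hence it suffices to rewrite each such generator as a class of a smooth $k$-variety mapping projectively to $X$.

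First, since $k$ is perfect, every regular $k$-scheme of finite type is smooth over $k$; in particular, each $V_i$ and $W$ is smooth over $k$. Second, smoothness over $k$ implies flatness over $k$, so the derived fibre products $V_1 \times^R_k \cdots \times^R_k V_r \times^R_k W$ coincide with the classical fibre products. Since smoothness is stable under base change and composition, this classical fibre product is itself a smooth $k$-variety, and the induced map to $X$ is projective because each $V_i \to \Spec(k)$ is projective and $W \to X$ is projective. The displayed generator is therefore already of the desired form, proving the corollary. The main obstacle has already been overcome in Theorem~\ref{GeneralSpivakThm}; what is used here is only the formal consequence of the classical equivalence between regularity and smoothness over a perfect field, together with the observation that derived fibre products of smooth $k$-schemes are classical.
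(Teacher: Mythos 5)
Your proof is correct and follows essentially the same route as the paper: invoke Theorem~\ref{GeneralSpivakThm}, use that regular finite-type schemes over a perfect field are smooth, and note that the derived fibre products of smooth $k$-schemes are classical (the paper phrases this as stability of smoothness under derived base change). Your version just spells out the module generators more explicitly; no gap.
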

\begin{proof}
This is an immediate corollary of Theorem \ref{GeneralSpivakThm} because a quasi-projective derived $k$-scheme is regular if and only if it is smooth over $k$, and smoothness is stable under derived base change.
\end{proof}

An immediate corollary of the above is the following vanishing result.

\begin{cor}\label{VanishCor}
Let $X$ be a quasi-projective $A$-variety. Then
\begin{enumerate}
\item if $A = k$ is a field, $\Omega^k_i(X)[e^{-1}] = 0$ for all $i<0$;
\item if $A$ is a discrete valuation ring, $\Omega^A_i(X)[e^{-1}] = 0$ for all $i < -1$.
\end{enumerate}
\end{cor}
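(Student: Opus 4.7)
The plan is to view this as an essentially immediate consequence of the refined Algebraic Spivak's theorem, specifically Corollary \ref{StrongerSpivakCor}, which tells us that $\Omega_\bullet(X)[e^{-1}] \cong \Omega^A_*(X)[e^{-1}]$ is generated as a $\Zb[e^{-1}]$-module by cycles $[V \to X]$ where $V$ is a classical complete intersection scheme and $V \to \Spec(A)$ is either flat or factors through the closed point. Since the statement is purely about vanishing, it suffices to check that every such generator lives in degree $\geq 0$ (in the field case) or $\geq -1$ (in the DVR case). After decomposing along connected components I may assume each generator has $V$ integral.

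For the field case $A = k$, any such generator $V$ is a nonempty classical lci $k$-scheme, so $\dim_v(V/k) = \dim(V) \geq 0$, and hence $[V \to X] \in \Omega^k_{\dim(V)}(X)$ with $\dim(V) \geq 0$. This directly gives $\Omega^k_i(X)[e^{-1}] = 0$ for $i < 0$.

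For the DVR case, there are two subcases coming from Corollary \ref{StrongerSpivakCor}. If $V$ is integral and flat over $A$, then $V \to \Spec(A)$ is surjective, so $\dim(V) \geq 1$, and Lemma \ref{DegDimLem} applied to the identity $\Spec(A) \to \Spec(A)$ yields $\dim_v(V/A) = \dim(V) - 1 \geq 0$. If instead $V$ factors through the unique closed point $\Spec(\kappa) \hookrightarrow \Spec(A)$, then $V$ is a classical lci $\kappa$-scheme and the transitivity triangle for cotangent complexes applied to $V \to \Spec(\kappa) \to \Spec(A)$, together with the fact that $\Spec(\kappa) \hookrightarrow \Spec(A)$ is a virtual Cartier divisor (so $\dim_v(\kappa/A) = -1$), gives
\[
\dim_v(V/A) = \dim_v(V/\kappa) + \dim_v(\kappa/A) = \dim(V) - 1 \geq -1.
\]
In both subcases each generator lies in degree $\geq -1$, so $\Omega^A_i(X)[e^{-1}] = 0$ for $i < -1$, completing the proof. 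There is no real obstacle here beyond correctly bookkeeping virtual dimensions; all the substance was already absorbed into Corollary \ref{StrongerSpivakCor}, which (crucially) controls the shape of the generators: without its second clause ruling out products of multiple classes supported over $\kappa$, the raw module description from Theorem \ref{GeneralSpivakThm} would a priori allow generators of arbitrarily negative degree in the DVR setting.
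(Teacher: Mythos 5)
Your proposal is correct and is essentially the paper's own argument: the paper proves this in one line by citing Corollary \ref{StrongerSpivakCor} together with the observation that lci $A$-schemes cannot have relative virtual dimension below $0$ (field case) or $-1$ (DVR case), which is exactly the bookkeeping you carry out explicitly. Your closing remark about why the second clause of Corollary \ref{StrongerSpivakCor} (ruling out iterated derived products supported over $\kappa$) is needed, rather than the raw module statement of Theorem \ref{GeneralSpivakThm}, correctly identifies where the substance lies.
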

\begin{proof}
This follows from Corollary \ref{StrongerSpivakCor} because lci $A$-schemes can not have arbitrarily negative relative virtual dimension.
\end{proof}

Note that the above bounds are strict as the fundamental class $1_{\kappa/A}$ of  $\Spec(\kappa)$, where $\kappa$ is the residue field of a discrete valuation ring $A$, lies in degree $-1$. The above vanishing result also has the following amusing corollary, which seems to be difficult to prove directly (although there should be an easier proof based on virtual fundamental classes in intersection theory and Grothendieck--Riemann--Roch formulas).

\begin{cor}\label{CoherentECCor}
Let $k$ be a field, and let $X$ be a projective derived $k$-scheme with negative virtual dimension. Then
$$\sum_{i \in \Zb} (-1)^i \dim_k\big(H^i(X; \Oc_X)\big) = 0 \in \Zb.$$
\end{cor}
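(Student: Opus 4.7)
The plan is to realize $\chi(X,\Oc_X)$ as the image of a bordism class under a Conner--Floyd-type specialization map and then invoke Corollary~\ref{VanishCor}. Let $f\colon X\to\Spec(k)$ be the structure morphism. By hypothesis $X$ is projective and quasi-smooth of virtual dimension $d:=\mathrm{vdim}_k(X)<0$ (quasi-smoothness being implicit in the existence of a virtual dimension), so $f$ is projective and quasi-smooth of relative virtual dimension $d$. Pushing forward the fundamental class yields
$$[X\to \Spec k]\;:=\;f_*\bigl(1_{X/k}\bigr)\;\in\;\Omega^k_d(\Spec k).$$

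Next, property (5) listed in the introduction (the Conner--Floyd-type recovery of $K^0$ from $\Omega^*$) supplies a natural specialization map
$$\tau\colon \Omega^*(\Spec k)\;\to\;K^0(\Spec k)=\Zb$$
that sends a cycle $[V\to\Spec k]$ with $V$ projective quasi-smooth over $k$ to the Euler characteristic of its structure sheaf, namely $\chi(V,\Oc_V)=\sum_i (-1)^i\dim_k H^i(V;\Oc_V)$. In particular, $\tau\bigl([X\to\Spec k]\bigr)=\chi(X,\Oc_X)$ is precisely the quantity we wish to show vanishes.

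By Corollary~\ref{VanishCor}(1), $\Omega^k_d(\Spec k)[e^{-1}]=0$ whenever $d<0$, so the class $[X\to\Spec k]$ is $e$-power torsion: there exists $n\geq 0$ with $e^n\cdot[X\to\Spec k]=0$ in $\Omega^k_d(\Spec k)$. Applying $\tau$ gives $e^n\,\chi(X,\Oc_X)=0$ in $\Zb$, and since $\Zb$ is torsion-free this forces $\chi(X,\Oc_X)=0$, as desired.

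The main (and essentially only) obstacle is verifying the existence of the specialization map $\tau$ together with its cycle-level formula $\tau([V\to\Spec k])=\chi(V,\Oc_V)$; this is not established in the present excerpt but is recorded as property (5) of the introduction and follows from the Conner--Floyd theorem for $\Omega^*$. Granting that input, the argument reduces to a clean combination of Corollary~\ref{VanishCor} with the torsion-freeness of $\Zb$.
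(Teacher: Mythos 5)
Your argument is correct and is essentially the paper's own proof: both realize $\chi(X,\Oc_X)$ as the image of $[X\to\Spec(k)]$ under the natural map to $K^0(\Spec(k))$ and then invoke Corollary \ref{VanishCor}. The only (cosmetic) difference is that you phrase the conclusion via $e$-power torsion in the non-inverted group and torsion-freeness of $\Zb$, whereas the paper works directly in the $e$-inverted groups and uses the injectivity of $\Zb\to\Zb[e^{-1}]$.
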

\begin{proof}
Indeed, the left hand side is the image of 
$$[X \to \Spec(k)] \in \Omega_*\big(\Spec(k)\big)[e^{-1}]$$
under the natural morphism 
$$\Omega_*\big(\Spec(k)\big)[e^{-1}] \to K^0\big(\Spec(k)\big)[e^{-1}] \cong \Zb[e^{-1}],$$
so the claim follows from Corollary \ref{VanishCor}.
\end{proof}

Let us then turn to the structure of the algebraic cobordism ring of $A$. Using birational geometry of varieties of dimension $\leq 2$, we are able to make modest progress towards Conjecture \ref{CobordismOfLocalRingsConj}.

\begin{cor}\label{CobRingCor}
Let $A$ be as in Theorem \ref{GeneralSpivakThm}. Then 
\begin{enumerate}
\item $\Omega^i\big( \Spec(A) \big)[e^{-1}]$ vanishes for $i>0$, and the natural map 
$$\Zb[e^{-1}] \cong \Lb^0[e^{-1}] \to \Omega^0\big( \Spec(k) \big)[e^{-1}]$$
is an isomorphism; in other words $\Omega^0\big( \Spec(A) \big)$ is the free $\Zb[e^{-1}]$-module generated by $1_{A}$;

\item if $A = k$ is a field, then the natural map 
$$\Lb^*[e^{-1}] \to \Omega^*\big( \Spec(k) \big)[e^{-1}]$$
is an isomorphism in degrees $0, -1$ and $-2$; in other words $\Omega^1\big( \Spec(k) \big)$ is the free $\Zb[e^{-1}]$-module generated by $[\Pb^1_k]$ and $\Omega^2 \big( \Spec(k) \big)$ is the free $\Zb[e^{-1}]$-module generated by $[\Pb^1_k]^2$ and $[\Sigma_{1,k}]$, where $\Sigma_{1,k}$ is the Hirzebruch surface of degree 1 over $k$.
\end{enumerate}
\end{cor}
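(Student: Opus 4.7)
My plan is to separate the statement into four sub-claims and prove them in the order: field case of part (2) in each degree, then part (1) for the DVR case (which cites back into part (2)). Throughout, the strategy is to combine Theorem \ref{GeneralSpivakThm} (generation by regular projective cycles) with Proposition \ref{LInjectivityProp} (injectivity of $\Lb^* \hookrightarrow \Omega^*$), so that each claim reduces to a surjectivity statement for the Lazard map.

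For the vanishing in part (1), Poincaré duality $\Omega^i(\Spec A) \cong \Omega^A_{-i}(\Spec A)$ combined with Corollary \ref{VanishCor} immediately kills degrees $i > 1$ (DVR) and $i > 0$ (field); this finishes the field case of vanishing. For the remaining degree $i = 1$ in the DVR setting, I would write any element via Spivak as a $\Zb[e^{-1}]$-combination of products of regular projective $A$-schemes, observe using Lemma \ref{DegDimLem} that any degree-$1$ product must contain at least one factor of the form $\Spec(l)$ with $l/\kappa$ a finite field extension, and then use the projection formula along $j\colon\Spec(\kappa)\hookrightarrow\Spec(A)$ to rewrite the product as $j_!(\gamma)$ for some $\gamma \in \Omega^0(\Spec\kappa)[e^{-1}]$. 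The part-(2) degree-$0$ statement identifies $\gamma$ with an integer multiple of $1_{\Spec\kappa}$, after which $j_!(1_{\Spec\kappa}) = [\Spec\kappa\hookrightarrow \Spec A] = c_1(\Oc)=0$ finishes the vanishing. The $\Omega^0$ identification follows the same recipe: every degree-$0$ Spivak product either consists of classes of finite flat regular $A$-algebras, which equal $[B\!:\!A]\cdot 1_A$ by Lemmas \ref{FiniteClassesLem} and \ref{RegularFiniteDVRClassLem}, or contains a factor through $\kappa$ and vanishes by the same $j_!$-argument.

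For part (2), after injectivity from Proposition \ref{LInjectivityProp}, Spivak together with the part-(1) identification in degree $0$ reduces the surjectivity to showing that every regular projective $k$-curve $C$ has class in $\Zb[e^{-1}]\cdot[\Pb^1_k]$, and that every regular projective $k$-surface $S$ has class in the $\Zb[e^{-1}]$-span of $[\Pb^1_k]^2$ and $[\Sigma_{1,k}]$. For curves, I would degenerate $C$ through a pencil in some projective space to a tree of rational components, obtained after inverting $e$ by a desingularization-by-alteration of a generic pencil (using Lemma \ref{ClassicalBertiniLem} and the Temkin-type alterations recalled in the Background section), and then iteratively apply the derived double point relation from Definition \ref{AlgCobDefn}, using that each nodal correction is itself a rational curve. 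For surfaces, I would apply the analogous pencil-plus-double-point strategy, now controlling the effect of blow-ups and projective bundles on cobordism classes via Lemmas \ref{ClassInBlowUpLem} and \ref{ClassOfPBLem}, and reducing birationally to minimal models via the Enriques--Kodaira classification.

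The hard step is the degree $-2$ surface case. Surfaces in non-rational birational classes (elliptic, K3, general type) must each be shown to land in the claimed two-dimensional $\Zb[e^{-1}]$-span, and since the birational reductions available to us pass through generically finite alterations rather than through genuine birational modifications, absorbing the resulting $\Lb^*$-linear correction terms into the final expression will be delicate and has to be done uniformly across the classification. This is exactly why the paper calls the progress toward Conjecture \ref{CobordismOfLocalRingsConj} only \emph{modest}: pushing past degree $-2$ would demand substantially more birational-geometric input than the alteration-based toolkit of Sections \ref{RefPBFSect} and \ref{AlgebraicSpivakSect} currently supplies.
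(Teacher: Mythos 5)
Your part (1) is essentially the paper's argument and is fine: vanishing in positive degrees from Corollary \ref{VanishCor} via duality, the dimension count of Lemma \ref{DegDimLem} forcing every degree-one generator over a DVR to contain a factor $\Spec(l)$ with $l/\kappa$ finite, the projection formula along $\Spec(\kappa)\hook\Spec(A)$ together with $[\Spec(\kappa)\hook\Spec(A)]=c_1(\Oc)=0$, and Lemmas \ref{FiniteClassesLem} and \ref{RegularFiniteDVRClassLem} plus the injectivity of Proposition \ref{LInjectivityProp} for the degree-zero identification. Your logical ordering (field case of (2) first, then the DVR statements) matches what the paper needs.

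The genuine gap is in part (2), and you have half-diagnosed it yourself: your pencil-degeneration and Enriques--Kodaira strategy for the degree $-2$ case is left incomplete, and it is also the wrong tool, because any route through alterations reintroduces uncontrolled generically finite corrections. The paper avoids both alterations and the surface classification by following Levine--Morel Section 4.3: for $k$ infinite and perfect, a smooth projective $X$ of dimension $n\le 2$ admits a finite \emph{birational} projection onto a hypersurface $X'\subset\Pb^{n+1}_k$ of some degree $d$, whose class $c_1(\Oc(d))\bullet 1_{\Pb^{n+1}_k}$ lies in the image of $\Lb^*$ by the formal group law; Abhyankar's \emph{embedded resolution} of curves and surfaces --- a genuine resolution valid in every characteristic, with no degree correction --- gives $\pi:Y\to\Pb^{n+1}_k$, a composition of blow-ups in smooth centers of dimension $\le n-1$, with $\pi^{-1}(X')=\wtil X+\sum n_i\Ec_i$ snc and $\wtil X$ smooth. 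Computing $[\pi^{-1}(X')]=c_1(\pi^*\Oc(d))\bullet[Y\to\Pb^{n+1}_k]$ via the snc relations, handling the $\Ec_i$ (projective bundles over lower-dimensional smooth varieties) by Lemma \ref{ClassOfPBLem} and induction, one isolates $[\wtil X]$ in the image of $\Lb^*[e^{-1}]$; since $\wtil X\to X$ is an isomorphism for $n=1$ and a composition of point blow-ups for $n=2$, Lemma \ref{ClassInBlowUpLem} transfers this to $[X]$. You also omit two cases the statement requires: for non-perfect $k$ one base-changes to the perfection, descends to a finite subextension of degree $e^m$, and divides by $e^m$; for finite $k$ the generic projection to a hypersurface need not exist, and one passes to two extensions of coprime degrees $p,q$ and recombines via $ap+bq=1$. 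Without these steps the degree $-1$ and $-2$ claims are not proved.
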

\begin{proof}
\begin{enumerate}
\item Let us first assume that $A = k$ is a field. The vanishing of $\Omega^i\big( \Spec(A) \big)[e^{-1}]$ for $i > 0$ follows from Corollary \ref{VanishCor}, and by Theorem \ref{GeneralSpivakThm} $\Omega^0\big( \Spec(A) \big)[e^{-1}]$ is generated as a $\Zb[e^{-1}]$-algebra by classes of field extensions of $k$. It then follows from Lemma \ref{RegularFiniteDVRClassLem} that the natural morphism
$$\Zb[e^{-1}] \cong \Lb^0[e^{-1}] \to \Omega^0\big( \Spec(k)\big)[e^{-1}]$$
is surjective, and hence an isomorphism by Prop \ref{LInjectivityProp}.

Suppose then that $A$ is an excellent Henselian discrete valuation ring with a perfect residue field $\kappa$. Then the vanishing of $\Omega^i\big( \Spec(A) \big)[e^{-1}]$ for $i > 1$ follows from Corollary \ref{VanishCor}. Moreover, by Corollary \ref{StrongerSpivakCor}, the Gysin pushforward map
$$\Zb[e^{-1}] \cong \Omega^0 \big( \Spec(\kappa) \big)[e^{-1}] \to \Omega^1 \big( \Spec(A)\big)[e^{-1}]$$
is a surjection because syntomic $A$-schemes cannot have negative relative virtual dimension. As $[\Spec(\kappa) \hook \Spec(A)] = 0$, it follows that $\Omega^1\big(\Spec(A)\big)[e^{-1}]$ vanishes. 

We are left to compute $\Omega^0 \big( \Spec(A)\big)[e^{-1}]$. From the previous paragraph (and Lemma \ref{DegDimLem}) it follows that it is generated as a $\Zb[e^{-1}]$-algebra by classes of form $[V \to \Spec(A)]$, where $V$ is a $1$-dimensional regular and integral projective $A$-scheme. If $V$ is flat over $A$, then $[V \to \Spec(A)]$ lies in the image of the natural morphism $\Lb^0[e^{-1}] \to \Omega^0 \big( \Spec(A)\big)[e^{-1}]$ by Lemma \ref{RegularFiniteDVRClassLem}. On the other hand, if $V$ is not flat over $A$, then it is a smooth curve over $\kappa$. Using the second part of this result, we have that
$$[V \to \Spec(\kappa)] = b \in \Omega^{-1}\big(\Spec(\kappa)\big)[e^{-1}]$$
where $b$ comes from $\Lb^{-1}[e^{-1}]$, and therefore 
\begin{align*}
[V \to \Spec(A)] &= b [\Spec(\kappa) \hook \Spec(A)] \\
&= 0  \in \Omega^{0}\big(\Spec(A)\big)[e^{-1}].
\end{align*}
It follows that the natural map
$$\Zb[e^{-1}] \cong \Lb^0[e^{-1}] \to \Omega^0\big( \Spec(A)\big)[e^{-1}]$$ 
is surjective, and hence an isomorphism by Proposition \ref{LInjectivityProp}.

\item The proof splits into three cases.
\begin{enumerate}
\item \emph{$k$ is infinite and perfect}: We follow the argument of Levine and Morel from \cite{Levine:2007} Section 4.3. Let $X$ be a smooth $k$-variety of dimension $n \leq 2$, and suppose that the claim is known up to dimension $n-1$. It is well known that $X$ admits a finite birational morphism $X \to X'$, where $X' \hook \Pb^{n+1}_k$ is a hypersurface; let us denote the degree of $X'$ by $d$. By the embedded resolution of surfaces (see \cite{Abhyankar1966} Section 0), we may find a morphism
$$\pi: Y \to \Pb^{n+1}_k$$
which is a composition of finitely many blow ups whose centers are smooth subvarieties of dimension $\leq n-1$ of $\Pb^{n+1}_k$ so that
$$\pi^{-1} (X') = \wtil{X} + n_1 \Ec_1 + \cdots + n_r \Ec_r$$
as divisors, where the strict transform $\wtil{X}$ of $X'$ is smooth, $\Ec_i$ are the exceptional divisors (which are projective bundles over smooth varieties of dimension $\leq n-1$), and where $n_i > 0$. As $X$ is the normalization of $X'$, we obtain a canonical morphism
$$\rho: \wtil X \to X.$$
Notice that if $n = 1$, then $\rho$ is an isomorphism, and if $n = 2$, then $\rho$ is a composition of blow ups at closed points (see e.g. \cite{stacks} Tag 0C5R). It follows from Lemma \ref{ClassInBlowUpLem} that $[X]$ lies in the image of $\Lb^*[e^{-1}]$ if and only if $[\wtil X]$ does.

Note that it follows from Lemma \ref{ClassInBlowUpLem} and Corollary \ref{PerfectSpivakCor} that 
$$c_1(\Oc(d)) \bullet [Y \to \Pb^{n+1}_k] = [X' \to \Pb^{n+1}_k] + \sum_{i=1}^m a_i [V_i \to \Pb^{d+1}_k]$$
where $a_i \in \Lb^*$ and $V_i$ are smooth varieties of dimension $\leq n-1$, and as $[X']$ lies in the image of $\Lb^*[e^{-1}]$ (this follows from the formal group law and the fact that $[\Pb^i_k]$ are in the image of $\Lb^*$), it follows that $[\pi^{-1} (X')]$ lies in the image of $\Lb^*[e^{-1}]$. On the other hand, by Lemma \ref{ClassOfPBLem} $[\Ec_i]$ lie in the image of $\Lb^*[e^{-1}]$, and therefore, computing the class of $[\pi^{-1} (X') \hook Y]$ using the formal group law, we see that also $[\wtil X]$ lies in the image of $\Lb^*[e^{-1}]$. But as we already noted, this implies that $[X]$ lies in the image of $\Lb^*[e^{-1}]$, so we are done.

\item \emph{$k$ is not perfect}: Let $X$ be a regular $k$-variety of dimension $n \leq 2$, and let $k^{\mathrm{perf}}$ be the perfection of $k$. It follows from (a) that $[X_{k^{\mathrm{perf}}}]$ lies in the image of $\Lb^*[e^{-1}]$. Moreover, by chasing coefficients we see that this is true for some finite intermediate field extension
$$k \subset k' \subset k^{\mathrm{perf}},$$
and therefore
$$[X] = e^{-m}[X_{k'}]$$
lies in the image of $\Lb^*[e^{-1}]$, where $e^m = [k' : k]$.

\item \emph{$k$ is finite}:  The problem is the strategy of (a) in this case is that a smooth projective $k$-variety $V$ might not admit a finite birational projection to a hypersurface. But this is easy to fix: clearly every such a variety admits such a projection for all large enough finite extensions of $k$, so in particular we can find extensions $k'$ and $k''$ of coprime degrees $p$ and $q$, such that $X_{k'}$ and $X_{k''}$ admit the desired projections over $k'$ and $k''$ respectively. Therefore, we can run the argument of part (a) for $X_{k'}$ and $X_{k''}$, and therefore
\begin{align*}
a [X_{k'} \to \Spec(k)] + b [X_{k''} \to \Spec(k)] &= a [X_{k'} \to \Spec(k)] + b [X_{k''} \to \Spec(k)] \\
&= (ap + bq) [X \to \Spec(k)] \\
&= [X \to \Spec(k)]
\end{align*}
lies in the image of $\Lb^*[e^{-1}] \to \Omega^*\big(\Spec(k)\big)[e^{-1}]$, where $a, b \in \Zb$ are such that $ap + bq = 1$. \qedhere
\end{enumerate}
\end{enumerate}
\end{proof}

Let us also record the following result, concerning the generators of the cobordism ring of a discrete valuation ring.

\begin{prop}\label{GeneratorProp}
Let $A$ be an excellent Henselian discrete valuation ring with a perfect residue field $\kappa$. Then
\begin{enumerate}
\item $\Omega^{-1}\big( \Spec(A) \big)[e^{-1}]$ is generated as a $\Zb[e^{-1}]$-module by regular and flat projective $A$-schemes;

\item if $\kappa$ has characteristic 0, then $\Omega^*\big( \Spec(A) \big)$ is generated as an Abelian group by the classes of syntomic projective $A$-schemes.
\end{enumerate}
\end{prop}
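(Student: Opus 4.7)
The plan is to deduce both parts from results already established in the excerpt, leveraging the pivotal observation that
$$[\Spec(\kappa) \hookrightarrow \Spec(A)] = c_1\big(\Oc_{\Spec(A)}\big) = 0 \in \Omega^1\big(\Spec(A)\big),$$
which holds because $\Spec(\kappa) \hookrightarrow \Spec(A)$ is the derived vanishing locus of the uniformizer $\pi$ viewed as a section of the trivial line bundle. Combined with the computation of the low-degree part of $\Omega^*\big(\Spec(A)\big)[e^{-1}]$ from Corollary \ref{CobRingCor}, this identity will force all generators coming from $\kappa$-varieties to vanish after pushforward to $\Spec(A)$.

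For Part 1, I would start from Theorem \ref{GeneralSpivakThm}, which presents $\Omega^*\big(\Spec(A)\big)[e^{-1}]$ as the $\Zb[e^{-1}]$-algebra generated by the classes $[V \to \Spec(A)]$ of regular projective $A$-schemes. Thus every element of $\Omega^{-1}\big(\Spec(A)\big)[e^{-1}]$ is a $\Zb[e^{-1}]$-combination of monomials $[V_1 \to \Spec(A)] \bullet \cdots \bullet [V_r \to \Spec(A)]$ whose degrees $1 - \dim V_i$ sum to $-1$. Since Corollary \ref{CobRingCor}(1) gives $\Omega^i\big(\Spec(A)\big)[e^{-1}] = 0$ for $i \geq 1$ and $\Omega^0\big(\Spec(A)\big)[e^{-1}] = \Zb[e^{-1}]$, a short accounting of the grading shows that every surviving monomial reduces, up to $\Zb[e^{-1}]$-scalars, to a single factor $[V \to \Spec(A)]$ with $V$ a regular projective $A$-scheme of $A$-dimension $2$. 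Such a $V$ is either flat over $A$---which is precisely the desired form---or a regular $\kappa$-variety of dimension $2$. In the latter case, Corollary \ref{CobRingCor}(2) expresses $[V \to \Spec(\kappa)] \in \Omega^{-2}\big(\Spec(\kappa)\big)[e^{-1}]$ as a $\Zb[e^{-1}]$-combination of $[\Pb^1_\kappa]^2$ and $[\Sigma_{1,\kappa}]$, both of which are base-changes from $\Zb$ and hence equal to $i^*\alpha$ for some $\alpha \in \Omega^{-2}\big(\Spec(A)\big)[e^{-1}]$, where $i: \Spec(\kappa) \hookrightarrow \Spec(A)$. The projection formula then yields
$$[V \to \Spec(A)] = i_!(i^*\alpha) = \alpha \bullet [\Spec(\kappa) \hookrightarrow \Spec(A)] = 0,$$
completing Part 1.

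For Part 2, the hypothesis $\mathrm{char}(\kappa) = 0$ forces $e = 1$, so the inversion is vacuous. Corollary \ref{StrongerSpivakCor} spans $\Omega^*\big(\Spec(A)\big)$ as a $\Zb$-module by classes $[V \to \Spec(A)]$ with $V$ a classical complete intersection that is either flat over $A$ or factors through $\Spec(\kappa)$. In the flat case, $V$ being absolutely lci together with $\Spec(A)$ being regular combine via Proposition \ref{DCIAltCharProp} to give $\Lb_{V/A}$ of Tor-dimension $\leq 1$; thus $V \to \Spec(A)$ is quasi-smooth, hence lci, and being also flat is syntomic. For the case factoring through $\Spec(\kappa)$, the Levine--Morel isomorphism $\Lb^* \xrightarrow{\cong} \Omega^*\big(\Spec(\kappa)\big)$ in residue characteristic zero places $[V \to \Spec(\kappa)]$ in the image of $i^*$, and the same projection-formula argument as above yields $[V \to \Spec(A)] = 0$, leaving only the syntomic generators.

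The bulk of the work is already contained in the input results, so once one has Theorem \ref{GeneralSpivakThm}, Corollary \ref{StrongerSpivakCor}, Corollary \ref{CobRingCor}, and the vanishing of $[\Spec(\kappa) \hookrightarrow \Spec(A)]$, both parts fall out as short formal consequences. The only delicate point worth highlighting is verifying that the $\kappa$-variety class $[V \to \Spec(\kappa)]$ really does lift to $\Omega^*\big(\Spec(A)\big)$ via $i^*$: in Part 1 this relies on the explicit generators $[\Pb^1_\kappa]^2$ and $[\Sigma_{1,\kappa}]$ being defined over $\Zb$, while in Part 2 it uses the full strength of the Levine--Morel identification over a characteristic zero field.
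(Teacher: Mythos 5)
Your proposal is correct and follows essentially the same route as the paper: both arguments reduce everything via the Spivak-type generation results to single regular (resp.\ complete intersection) cycles, and then kill the non-flat ones by observing that their classes over $\kappa$ come from $\Lb^*$ (via Corollary \ref{CobRingCor}(2) in degree $-2$, resp.\ Levine--Morel in characteristic $0$) and that $[\Spec(\kappa) \hookrightarrow \Spec(A)] = c_1(\Oc) = 0$. Your degree-accounting phrasing of Part 1 and the projection-formula formulation of the vanishing step are just minor repackagings of the paper's argument.
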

\begin{proof}
In both cases the proof is essentially the same. By Corollary \ref{StrongerSpivakCor}, we see that $\Omega^{i}\big( \Spec(A) \big)[e^{-1}]$ is generated by cycles of form $[V \to \Spec(A)]$ with $V$ a complete intersection scheme that is either flat over $A$ or factors through $\Spec(\kappa)$, and we have to show that non-flat cycles vanish at least in certain degrees. Note that if $V$ is not flat, then
$$[V \to \Spec(\kappa)] \in \Omega^{-d}\big(\Spec(\kappa)\big)[e^{-1}]$$
lies in the image of the natural morphism $\Lb^{-d}[e^{-1}] \to \Omega^{-d}\big(\Spec(\kappa)\big)[e^{-1}]$ if $d$ is at most $2$ or if $\kappa$ has characteristic 0 (\cite{Levine:2007} Theorem 4.3.7), and therefore
\begin{align*}
[V \to \Spec(A)] &= b[\Spec(\kappa) \hook \Spec(A)] \\
&= 0 \in \Omega^{1-d}(A)
\end{align*}
for some $b \in \Lb^{-d}[e^{-1}]$. Hence $\Omega^{-i}\big( \Spec(A) \big)$ is generated by classes of syntomic projective $A$-schemes if $i$ is at most $1$ or if $\kappa$ has characteristic 0. Moreover, as $\Zb[e^{-1}] \cong \Omega^0\big(\Spec(A)\big)[e^{-1}]$, it follows from Theorem \ref{GeneralSpivakThm} that $\Omega^{-1}\big( \Spec(A) \big)[e^{-1}]$ is generated as a $\Zb[e^{-1}]$-module by regular cycles, proving the first claim.
\end{proof}

\subsection{Proof of the Extension theorem}\label{ExtendingCyclesSubSect}

The goal of this section is to prove the following result.

\begin{thm}[Extension theorem]\label{ExtensionThm}
Let $A$ be a field or an excellent Henselian discrete valuation ring with a perfect residue field $\kappa$, and let $j: X \hook \overline X$ be an open embedding of quasi-projective derived $A$-schemes. Then the pullback morphism
$$j^!: \Omega^A_*(\overline X)[e^{-1}] \to \Omega^A_*(X)[e^{-1}]$$
is surjective.
\end{thm}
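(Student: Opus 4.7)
The plan is to reduce via algebraic Spivak's theorem to the problem of extending cycles of the form $[V \to X]$ with $V$ regular and $V \to X$ projective, and then to lift one such cycle at a time by mimicking the deformation-to-normal-cone plus desingularization-by-alterations argument of Lemma \ref{FundClassLem}, this time performed inside $\overline X$ rather than inside $\Spec(A)$. I proceed by induction on the Krull dimension of $V$.

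By Theorem \ref{GeneralSpivakThm} it suffices to lift such a cycle $[V \to X]$. Choose a closed embedding of $V$ into an open subscheme $U = \Pb^n_X$ of $\Pb^n_{\overline X}$, and let $\overline V_\cl \hook (\Pb^n_{\overline X})_\cl$ be the classical scheme-theoretic closure; then $\overline V_\cl$ is classical and projective over $\overline X$ with $\overline V_\cl \cap U = V$. Run the classical deformation diagrams (Constructions \ref{DeformDiag2}--\ref{DeformDiag3}) for $\overline V_\cl$ and restrict to $U$, and apply Temkin's theorem to obtain a projective $e$-alteration $\pi\colon W \to M^\cl(\overline V_\cl/(\Pb^n_{\overline X})_\cl)$ of degree $e^m$ with $W$ regular and fibres $W_0 := \pi^{-1}(\Pb^n_{\overline X})$, $\Ec' := \pi^{-1}(\Ec)$ snc. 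The deformation identity analogous to equation (\ref{MainEq}) then reads
$$i^!\big([W_0^\circ \to U]\big) = s^!\big([\Ec'^\circ \to \Pb(\Nc_{V/U} \oplus \Oc)]\big) \in \Omega^A_*(V).$$

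The crucial observation is that every class entering either side of this identity is built from global data over $\overline X$. For the right hand side, the snc relations (Proposition \ref{SNCRelationsProp}) combined with Proposition \ref{RegularChernPresentationProp}, as in Lemma \ref{MainEqLem2}, express it as an integral combination of regular cycles $[\Ec'^\circ_I \to V]$ weighted by Chern class factors; each such cycle is obtained as the $j^!$-pullback of the corresponding global stratum $\Ec'_I$ of $\Ec'$ inside $W$, which is already a regular cycle over $\overline X$. For the left hand side, one applies a relative analogue of Corollary \ref{ThirdRefinedPBFCor} to the morphism $W_0 \to \Pb^n_{\overline X}$, which is generically finite of degree $e^m$ from the snc scheme $W_0$; combined with the projective bundle formula for $\Pb^n_{\overline X} \to \overline X$, this yields a decomposition
$$[W_0 \to \Pb^n_{\overline X}] = e^m \cdot 1_{\Pb^n_{\overline X}/A} + \sum_{i=1}^n \pi_{\overline X}^*(\widetilde\alpha_i) \bullet c_1(\Oc(1))^i \bullet 1_{\Pb^n_{\overline X}/A}$$
with $\widetilde\alpha_i \in \Omega^A_*(\overline X)$. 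Restricting to $U$, pulling back along $i$, and pushing forward to $X$ via $f\colon V \to X$, the projection formula converts this into
$$f_*\big(i^!([W_0^\circ \to U])\big) = e^m \cdot [V \to X] + \sum_{i=1}^n j^!(\widetilde\alpha_i) \bullet f_*\big(c_1(\Oc_V(1))^i \bullet 1_{V/A}\big),$$
and by Lemma \ref{ChernDimLem} each correction term for $i \geq 1$ is an $\Lb$-linear combination of pushforwards of fundamental classes of quasi-projective derived $X$-schemes of Krull dimension strictly less than $\dim V$, hence extendible to $\overline X$ by the inductive hypothesis.

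Assembling everything and inverting $e$, one obtains $[V \to X] = j^!(\gamma) + (\text{inductively extendible terms})$ for some $\gamma \in \Omega^A_*(\overline X)[e^{-1}]$, closing the induction. The main obstacle will be the relative analogue of Corollary \ref{ThirdRefinedPBFCor} used above: one must verify that the coefficient of the leading term $1_{\Pb^n_{\overline X}/A}$ in the projective bundle decomposition of $[W_0 \to \Pb^n_{\overline X}]$ is precisely $e^m$, which parallels the role of Theorem \ref{SecondRefinedPBFThm} in the proof of the algebraic Spivak's theorem. Proving this should require adapting the proof of Theorem \ref{SecondRefinedPBFThm} to the relative setting via a component-by-component analysis of $\overline X$, and is where the hypotheses on $A$ are invoked once more, through the Bertini-regularity results of Ghosh--Krishna and Temkin's alteration theorem over the one-dimensional base $\Spec(A)$.
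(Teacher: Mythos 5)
Your overall strategy (reduce via Theorem \ref{GeneralSpivakThm} to extending a single regular cycle $[V\to X]$, then run deformation to the normal cone plus desingularization by alterations and induct) is the right one, but the way you set up the deformation creates a genuine gap. You deform inside $\Pb^n_{\overline X}$ and then need the identity $[W_0\to\Pb^n_{\overline X}] = e^m\cdot 1_{\Pb^n_{\overline X}/A}+\sum\pi_{\overline X}^*(\widetilde\alpha_i)\bullet c_1(\Oc(1))^i\bullet 1_{\Pb^n_{\overline X}/A}$, i.e.\ a version of Corollary \ref{ThirdRefinedPBFCor} relative to an arbitrary quasi-projective derived base $\overline X$. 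This is not a technical adaptation one can defer: the class $1_{\Pb^n_{\overline X}/A}$ need not even exist, since $\overline X$ is not assumed quasi-smooth over $A$; Temkin's theorem and the notion of ``generically finite of degree $e^m$'' require an integral ambient scheme, whereas $\Pb^1\times\Pb^n\times\overline X_{\cl}$ can be reducible, non-reduced and singular; and computing classes of generically finite morphisms over a general base is precisely the problem that Theorem \ref{SecondRefinedPBFThm} only solves when the target is $\Pb^n_A$ (it relies on Lemma \ref{RegularFiniteDVRClassLem} and the computation of $\Omega^{-1}(\Spec(\kappa))$, which have no analogue over a singular or derived $\overline X$). A secondary issue: on the normal-cone side you assert that the strata $\Ec'^\circ_I$ extend as global strata, but the class $s^!([\Ec'^\circ\to\Pb(\Nc_{V/U}\oplus\Oc)])$ carries the factor $c_r(\Nc_{V/U}(1))$, a Chern class of a bundle defined only over the open part; extending it is not automatic.

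The paper sidesteps both problems by keeping the deformation over $\Pb^n_A$, exactly as in the proof of Spivak's theorem. One first observes that it suffices to extend $1_{V/A}$ across the open embedding of $V$ into its own closure $\overline V\subset\Pb^n_A\times\overline X_\cl$. Running Construction \ref{DesingAltOfDeformCons} for $V\hook U\subset\Pb^n_A$ gives
$1_{V/A} = e^{-m}\bigl(s^!([\Ec'^\circ\to\Pb(\Nc_{V/U}\oplus\Oc)])-\sum_i\alpha_i.(c_1(\Oc_V(1))^i\bullet 1_{V/A})\bigr)$,
where now every term visibly extends: the correction terms lie in strictly lower degrees of $\Omega^A_*(V)$ and are handled by an induction on degree grounded by the vanishing of Corollary \ref{VanishCor} (rather than your induction on $\dim V$), and the normal-cone term extends by a dedicated lemma (Lemma \ref{ExtendingOnSNCLem}) that extends $c_i(E)\bullet 1_{D^\circ/\Zb}$ across an open immersion of snc schemes by choosing coherent-sheaf extensions of $E$ on the regular strata, where coherent sheaves are perfect and hence have Chern classes. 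You should restructure your argument along these lines; as written, the ``relative Corollary \ref{ThirdRefinedPBFCor}'' you rely on is not available.
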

\noindent As $j^!$ is $\Omega^*\big(\Spec(A)\big)[e^{-1}]$-linear, it is enough by Theorem \ref{GeneralSpivakThm} to show that an element
$$[V \to X] \in \Omega^A_d(X)$$
with $V$ a regular scheme lies in the image of $j^!$. Letting $\overline V$ be the scheme theoretic closure of $V$ in some locally closed embedding
$$V \hook \Pb^n \times X_\cl \subset \Pb^n \times \overline X_\cl$$
which clearly maps projectively to $\overline X$, it is enough to extend the fundamental class of $V$ to an element of $\Omega^A_*(\overline V)[e^{-1}]$. As all quasi-projective $A$-schemes admit a projective compactification, Theorem \ref{ExtensionThm} follows from an easy inductive argument combined with the following lemma and Corollary \ref{VanishCor}. 

\begin{lem}\label{MainExtensionLem}
Let $\overline X$ be a projective $A$-scheme and let $j: X \hook \overline X$ be an open embedding with $X$ a regular scheme of virtual $A$-dimension $d$. If the Gysin pullback morphism
$$j^!: \Omega^A_i(\overline X)[e^{-1}] \to \Omega^A_i(X)[e^{-1}]$$
is surjective for $i \leq d-1$, then there exists a class $\alpha \in \Omega^A_d( \overline X )[e^{-1}]$ with $j^!(\alpha) = 1_{X/A}$.
\end{lem}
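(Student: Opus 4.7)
The strategy is to globalize the argument of Lemma~\ref{FundClassLem}, working with the classical deformation of $\overline X_\cl \hookrightarrow \Pb^n_A$ in full (rather than restricting to a neighborhood $U$ of $X$) and absorbing the resulting low-dimensional error terms using the inductive hypothesis.

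To begin, choose a closed embedding $\overline X \hookrightarrow \Pb^n_A$ so that $\overline X_\cl$ is identified with the scheme-theoretic closure of $X_\cl$, and set $U = \Pb^n_A \setminus (\overline X_\cl \setminus X_\cl)$; this makes $X \hookrightarrow U$ a closed quasi-smooth embedding. Apply Construction~\ref{DesingAltOfDeformCons} to $\overline X_\cl \hookrightarrow \Pb^n_A$ to obtain an $e$-alteration $\pi \colon W \to M^\cl(\overline X_\cl / \Pb^n_A)$ from a regular scheme $W$ with snc divisors $W_0$ and $W_\infty = \Ec' + B'$, where $\Ec' = \pi^{-1}(\Ec)$ and $B' = \pi^{-1}(\bl^\cl_{\overline X_\cl}(\Pb^n_A))$. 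The same computations as in the proof of Lemma~\ref{FundClassLem}, carried out on the restriction $\pi^\circ \colon W^\circ \to M^\cl(X_\cl / U)$, yield the identity
\[
e^m \cdot 1_{X/A} \;=\; s^!\big([\Ec'^\circ \to \Pb(\Nc_{X/U} \oplus \Oc)]\big) \;-\; \sum_{i=1}^{n} \alpha_i \cdot \big(c_1(\Oc_X(1))^i \bullet 1_{X/A}\big) \;\in\; \Omega^A_d(X)[e^{-1}]
\]
for some $m \geq 0$ and $\alpha_i \in \Omega^{-i}(\Spec(A))$. Each summand $c_1(\Oc_X(1))^i \bullet 1_{X/A}$ with $i \geq 1$ has virtual $A$-dimension $d - i \leq d - 1$, so by the inductive hypothesis it lies in the image of $j^!$.

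The remaining and central task is to show that $s^!\big([\Ec'^\circ \to \Pb(\Nc_{X/U} \oplus \Oc)]\big)$ also lies in the image of $j^!$. The key geometric observation is that the projective composition $\Ec' \to \Ec \to \overline X_\cl \hookrightarrow \overline X$ is defined globally, and its restriction over the open subset $X \subset \overline X$ recovers the morphism $\Ec'^\circ \to \Pb(\Nc_{X/U} \oplus \Oc) \to X$ appearing in the computation of $s^!$ (via Lemma~\ref{DeformLem2} together with the regularity of $X \hookrightarrow U$, which identifies $\Ec^\circ$ with $\Pb(\Nc_{X/U} \oplus \Oc)$). Rewriting $s^!$ as a Chern-class intersection followed by pushforward via Lemma~\ref{SectionPullbackLem}, decomposing $1_{\Ec'^\circ/A}$ through the snc relations of Proposition~\ref{SNCRelationsProp} applied to $\Ec' = \sum n_i \Ec'_i \subset W$, and presenting the Chern-class intersections appearing---in particular $c_r(\Nc_{X/U}(1))$ together with Chern classes of $\Oc_W(\Ec'_i)|_{\Ec'_I}$---as integral combinations of classes of regular subschemes of the global snc strata $\Ec'_I$ via Proposition~\ref{RegularChernPresentationProp}, one assembles a class in $\Omega^A_d(\overline X)[e^{-1}]$ by pushing forward along the projective morphisms $\Ec'_I \to \overline X$, whose Gysin pullback along $j$ agrees with $s^!\big([\Ec'^\circ \to \Pb(\Nc_{X/U} \oplus \Oc)]\big)$.

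The main obstacle is the Chern class $c_r(\Nc_{X/U}(1))$: while the other Chern classes involved come from globally defined line bundles on $W$ restricted to $\Ec'_I$, the vector bundle $\Nc_{X/U}$ is only defined over $X$, and the classical normal cone $C_{\overline X_\cl / \Pb^n_A}$ on $\overline X_\cl$ is not generally a vector bundle, so $c_r(\Nc_{X/U}(1))$ does not directly extend as a Chern class over $\Ec$. This is handled by working one stratum at a time: on each regular global stratum $\Ec'_I$ the Chern-class intersection of interest can, via Proposition~\ref{RegularChernPresentationProp} and the Bertini-regularity input of Theorem~\ref{BertiniRegularityThm}, be replaced by an integral combination of cycles of regular subschemes $[V \to \Ec'_I]$ whose restrictions to $\Ec'^\circ_I$ recover the original local Chern-class representation, providing the required global lift. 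Combining this construction with the inductive extension of the lower-dimensional terms and inverting $e$ produces the desired $\alpha \in \Omega^A_d(\overline X)[e^{-1}]$ with $j^!(\alpha) = 1_{X/A}$.
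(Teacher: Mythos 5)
Your setup is exactly the paper's: the same deformation diagram, the same alteration from Construction \ref{DesingAltOfDeformCons}, the same identity expressing $e^m\cdot 1_{X/A}$ in terms of $s^!\big([\Ec'^\circ \to \Pb(\Nc_{X/U}\oplus\Oc)]\big)$ plus terms of virtual dimension $\leq d-1$ that are absorbed by the hypothesis, and the same reduction via Lemma \ref{SectionPullbackLem} and the snc relations. The proof therefore stands or falls on the one step you identify as "the main obstacle," and there your argument has a genuine gap. You propose to apply Proposition \ref{RegularChernPresentationProp} (hence Theorem \ref{BertiniRegularityThm}) \emph{on the closed strata} $\Ec'_I$ to produce regular cycles $[V \to \Ec'_I]$ restricting to the Chern-class representation on $\Ec'^\circ_I$. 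But Proposition \ref{RegularChernPresentationProp} presents Chern classes of a vector bundle \emph{defined on the scheme in question}, and $\Nc_{X/U}(1)$ (pulled back to the exceptional locus) exists only over the open strata $\Ec'^\circ_I$; there is no vector bundle on $\Ec'_I$ to feed into Bertini there, and a vector bundle on an open subscheme of a regular scheme does not in general extend to a vector bundle on the whole scheme. Running Proposition \ref{RegularChernPresentationProp} on $\Ec'^\circ_I$ instead produces regular cycles on the \emph{open} stratum, and these do not obviously extend either: the scheme-theoretic closure of a regular scheme need not be lci, so its class in $\Omega^A_*(\Ec'_I)$ is not even defined. So the "required global lift" is asserted rather than constructed.

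The paper closes this gap with a different and more robust device (Lemma \ref{ExtendingOnSNCLem}): decompose $1_{\Ec'/\Zb}$ via the snc relations over the regular strata $\Ec'_I$, extend the restriction of the vector bundle from $j^{-1}(\Ec'_I)$ to a \emph{coherent sheaf} $\Fc_I$ on all of $\Ec'_I$ (always possible), observe that a coherent sheaf on a \emph{regular} scheme is a perfect complex and hence has well-defined Chern classes, and conclude by naturality of Chern classes that $\sum_I \iota^I_*\big(c_i(\Fc_I)\bullet\alpha_I\big)$ restricts to $c_i(E)\bullet 1_{\Ec'^\circ/\Zb}$. This is the idea your proof is missing; with it in place (applied to $\Ec'^\circ \hook \Ec'$ and pushed forward to $\overline X$), the rest of your argument goes through as written.
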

\begin{proof}
Note that we can find a (derived) Cartesian square
$$
\begin{tikzcd}
X \arrow[hook]{d}{i} \arrow[hook]{r}{j} & \overline X \arrow[hook]{d}{\bar i} \\
U \arrow[hook]{r} & \Pb^n_A
\end{tikzcd}
$$
where the vertical arrows are closed embeddings and the horizontal arrows are open embeddings. Since $i$ is a (derived) regular embedding, we can apply Construction \ref{DesingAltOfDeformCons}, and conclude that, in the notation of Section \ref{DeformDiagSubSect} and Construction \ref{DesingAltOfDeformCons}, the equality
$$i^!([W^\circ_0 \to U]) = s^!\big([\Ec'^\circ \to \Pb(\Nc_{X/U} \oplus \Oc)] \big) \in \Omega^A_*(X)$$
holds. By Lemma \ref{MainEqLem1} we have that
$$1_{X/A}  = { s^!\big([\Ec'^\circ \to \Pb(\Nc_{X/U} \oplus \Oc)] \big) - \sum_{i=1}^n \alpha_i . \big( c_1(\Oc_X(1))^i \bullet 1_{X / A} \big) \over e^m} \in \Omega^A_*(X)[e^{-1}],$$
where $\Oc_X(1)$ is the restriction of $\Oc(1)$ on $X$, and $\alpha_i \in \Omega^{-i}\big(\Spec(A)\big)$. As $c_1(\Oc_X(1))^i \bullet 1_{X / A} \in \Omega^A_{d-i}(X)[e^{-1}]$, we have reduced the problem to showing that 
$$s^!\big([\Ec'^\circ \to \Pb(\Nc_{X/U} \oplus \Oc)] \big)$$ 
extends. By Lemma \ref{SectionPullbackLem}
$$s^!\big([\Ec'^\circ \to \Pb(\Nc_{X/U} \oplus \Oc)] \big) = \rho_* \big( c_r(\Nc_{X/U}(1)) \bullet [\Ec'^\circ \to \Pb(\Nc_{X/U} \oplus \Oc)] \big) \in \Omega^A_*(X),$$
where $\rho$ is the projection $\Pb(\Nc_{X/U} \oplus \Oc) \to X$, and we can find the desired extension by applying Lemma \ref{ExtendingOnSNCLem} to the open immersion $\Ec'^\circ \hook \Ec'$ and pushing forward to $\overline X$.
\end{proof}

We needed the following lemma in the above proof.

\begin{lem}\label{ExtendingOnSNCLem}
Let $j: D^\circ \hook D$ be an open immersion of snc-schemes admitting an ample line bundle, and let $E$ be a vector bundle on $D^\circ$. Then for all $i \geq 0$ there exists a class $\alpha \in \Omega_\bullet(D)$ so that
$$j^!(\alpha) = c_i(E) \bullet 1_{D^\circ / \Zb} \in \Omega_\bullet(D^\circ).$$ 
\end{lem}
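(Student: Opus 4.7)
The plan is to reduce to the case where $D$ is regular using the snc relations, and then extend the Chern class by extending $E$ to a coherent sheaf on $D$ and using a vector bundle resolution.

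First I would realize $D$ as an snc divisor in a regular scheme $X$, with prime components $D_1, \ldots, D_r$ and multiplicities $n_1, \ldots, n_r$. Setting $X^\circ := X \setminus (D \setminus D^\circ)$, each stratum $D_I^\circ := \bigcap_{k \in I}(D_k \cap X^\circ)$ is regular and sits as an open subscheme of the regular $D_I := \bigcap_{k \in I} D_k$; moreover the line bundles $\Oc_X(D_k)|_{D_I^\circ}$ are restrictions of $\Oc_X(D_k)|_{D_I}$. Applying Proposition \ref{SNCRelationsProp} to $1_{D^\circ/\Zb}$, multiplying by $c_i(E)$, and using the projection formula yields
\[
c_i(E) \bullet 1_{D^\circ/\Zb} = \sum_{I \subset [r]} \iota^{I,\circ}_*\Bigl(c_i(E|_{D_I^\circ}) \bullet \gamma_I^\circ \bullet 1_{D_I^\circ/\Zb}\Bigr),
\]
where $\gamma_I^\circ \in \Omega^*(D_I^\circ)$ is the restriction of a class $\gamma_I \in \Omega^*(D_I)$ given by the same universal power series $F_I^{n_1,\ldots,n_r}$ applied to $c_1(\Oc_X(D_k)|_{D_I})$. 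Hence it suffices to extend each $c_i(E|_{D_I^\circ}) \bullet 1_{D_I^\circ/\Zb}$ to a class in $\Omega_\bullet(D_I)$: multiplying by $\gamma_I$, pushing forward along $\iota^I:D_I \hookrightarrow D$, and summing yields the desired $\alpha$.

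This reduces the claim to the following statement: for a regular $Y$ in $\Cc$, an open $Y^\circ \subset Y$, and a vector bundle $F$ on $Y^\circ$, extend $c_i(F) \bullet 1_{Y^\circ/\Zb}$ to a class in $\Omega_\bullet(Y)$. Since $Y$ is regular, Poincaré duality (Proposition \ref{BordismVsYBordismProp}) translates this to extending $c_i(F) \in \Omega^i(Y^\circ)$ to $\Omega^i(Y)$. I would do this by first extending $F$ to a coherent sheaf $\tilde F$ on $Y$ (possible for Noetherian $Y$ via Nagata's extension trick), and then choosing a finite locally free resolution $0 \to \tilde E_n \to \cdots \to \tilde E_0 \to \tilde F \to 0$ on $Y$ (possible by the resolution property for regular schemes with an ample line bundle). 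Using the resolution, define $c_i(\tilde F) \in \Omega^i(Y)$ via the total-Chern-class formula $c(\tilde F) = \prod_j c(\tilde E_j)^{(-1)^j}$, computed degree by degree in the multiplicative group of units of $\Omega^*(Y)$. The restriction of the resolution to $Y^\circ$ remains exact, and by the Whitney sum formula together with the naturality of Chern classes, $c_i(\tilde F)|_{Y^\circ} = c_i(F)$, providing the required extension.

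I expect the main technical obstacle to be verifying that this ``Chern class of a coherent sheaf via resolution'' construction is well-defined (independent of the chosen resolution) and compatible with pullbacks in the algebraic cobordism setting. These are formal consequences of the Whitney sum formula and the universal formal group law on $\Omega^*$, analogous to the classical $K$-theoretic story, but must be carried out carefully within the bivariant framework of this paper. The snc-reduction step, by contrast, is a direct application of the snc relations (Proposition \ref{SNCRelationsProp}) together with the projection formula.
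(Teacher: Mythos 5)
Your proposal is correct and follows essentially the same route as the paper: decompose via the snc relations into the regular strata $D_I$, extend the bundle to a coherent sheaf on each stratum, and use regularity to get well-defined Chern classes that restrict correctly. The only cosmetic differences are that the paper applies the snc relations on $D$ and then restricts along $j$ (rather than applying them on $D^\circ$ and extending the coefficients), and it compresses your resolution argument into the remark that a coherent sheaf on the regular $D_I$ is a perfect complex and hence has well-defined Chern classes.
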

\begin{proof}
Let $D_1,...,D_r$ be the prime components of $D$, and denote for each $I \subset [r]$ by $\iota^I$ the closed embedding 
$$D_I := \bigcap_{i \in I} D_i \hook D.$$
By the snc-relations we have that
$$1_{D/\Zb} = \sum_{I \subset [r]} \iota^I_*(\alpha_I) \in \Omega_\bullet(D),$$
and moreover, if $\Ifr$ contains all $I$ so that $D_I$ meets the image of $j$, then
$$1_{D^\circ / \Zb} = j^! \Bigg( \sum_{I \in \Ifr} \iota^I_*(\alpha_I) \Bigg) \in \Omega_\bullet(D^\circ).$$
For each $I \in \Ifr$ choose a coherent sheaf $\Fc_I$ on $D_I$ extending $E \vert_{j^{-1}(D_I)}$. As $D_I$ is regular, $\Fc_I$ is a perfect complex, and therefore has well defined Chern classes. By the naturality of Chern classes, it follows that
$$c_i(E) \bullet 1_{D^\circ / \Zb} = j^! \Bigg( \sum_{I \in \Ifr} \iota^I_* \big(c_i(\Fc_I) \bullet \alpha_I\big) \Bigg) \in \Omega_\bullet(D^\circ),$$
which is exactly what we wanted.
\end{proof}

Combining the Extension theorem with the projective bundle formula, we obtain the following $\Ab^1$-invariance statement.

\begin{cor}[Homotopy invariance]\label{HomotopyInvarianceCor}
Let $A$ be as in Theorem \ref{ExtensionThm}, let $X$ be a quasi-projective derived $A$-scheme and let $p: E \to X$ be a vector bundle of rank $r$ on $X$. Then the pullback map
$$p^!: \Omega^A_*(X)[e^{-1}] \to \Omega^A_{*+r}(E)[e^{-1}]$$
is an isomorphism.
\end{cor}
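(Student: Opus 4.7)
The plan is to deduce the claim by combining the Extension theorem with the projective bundle formula (PBF). Let $\bar p: \overline E := \Pb(E \oplus \Oc) \to X$ be the projective completion of $E$, with open embedding $j: E \hook \overline E$ and closed complement $\Pb(E) \hook \overline E$ at infinity. Note that $\overline E$ is a projective bundle of rank $r$ over $X$.

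For injectivity I would argue abstractly without needing the Extension theorem. The zero section $s: X \hook E$ is a quasi-smooth closed embedding and satisfies $p \circ s = \id_X$, so functoriality of Gysin pullbacks gives
$$s^! \circ p^! = (p \circ s)^! = \id^! = \id,$$
which shows that $p^!$ is (split) injective.

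For surjectivity, let $\beta \in \Omega^A_*(E)[e^{-1}]$. By the Extension theorem (Theorem \ref{ExtensionThm}) there exists $\gamma \in \Omega^A_*(\overline E)[e^{-1}]$ with $j^!(\gamma) = \beta$. Applying the bivariant PBF to $\bar p: \overline E \to X$, we may write
$$\gamma = \sum_{i=0}^{r} c_1\big(\Oc_{\overline E}(1)\big)^i \bullet \bar p^!(\alpha_i)$$
for suitable $\alpha_i \in \Omega^A_*(X)[e^{-1}]$. The key geometric observation is that the tautological quotient line bundle $\Oc(1)$ on $\overline E = \Pb(E \oplus \Oc)$ becomes trivial upon restriction to the open subscheme $E$: the canonical section $e \mapsto (e,1)$ of $E \oplus \Oc$ trivializes $\Oc(-1)|_E$, and hence $\Oc(1)|_E$. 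Therefore $c_1\big(\Oc_{\overline E}(1)\big)^i$ restricts to $0$ on $E$ for $i \geq 1$, and compatibility of $j^!$ with Chern classes and bivariant products yields
$$\beta = j^!(\gamma) = p^!(\alpha_0),$$
showing that $p^!$ is surjective.

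The argument is fairly mechanical once the two main ingredients are in place; I expect the only thing to double-check is the triviality of $\Oc(1)|_E$ and the fact that $j^!$ interacts correctly with $c_1$ and with $\bar p^!$ (so that $j^!\bar p^! = p^! \id^*$), both of which follow from the bivariant formalism already invoked repeatedly in the paper.
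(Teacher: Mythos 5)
Your proposal is correct and follows essentially the same route as the paper: injectivity via the zero section splitting, and surjectivity by lifting a class to $\Pb(E\oplus\Oc)$ via the Extension theorem, decomposing with the projective bundle formula, and observing that the terms involving $c_1(\Oc(1))^i$ for $i\geq 1$ die on $E$ (the paper phrases this via the global section of $\Oc(1)$ cutting out $\Pb(E)$, which is the same fact as your trivialization of $\Oc(1)|_E$).
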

\begin{proof}
As $p$ admits a section, $p^!$ is at least an injection. On the other hand by the projective bundle formula, we have that the morphism
$$\bigoplus_{i=0}^r \Omega^A_{*-r+i} (X)[e^{-1}] \to \Omega^A_*\big(\Pb(E \oplus \Oc)\big)[e^{-1}]$$
where the $i^{th}$ morphism is defined as
$$c_1(\Oc(1))^i \bullet \bar p^!(-)$$
is an isomorphism, where $\bar p$ is the natural projection $\Pb(E \oplus \Oc) \to X$. As $\Oc(1)$ has a global section with derived vanishing locus $\Pb(E)$ whose complement in $\Pb(E \oplus \Oc)$ is the open immersion $j: E \hook \Pb(E \oplus \Oc)$, it follows that all classes of form $c_1(\Oc(1))^i \bullet \bar p^!(\alpha)$ vanish when pulled back along $j$.  As $j^!$ is surjective by the Extension Theorem, we can conclude that $p^!$ is a surjection, which finishes the proof.
\end{proof}

\bibliographystyle{alphamod}
\bibliography{references}{}

\Addresses
\end{document}